\theoremstyle{plain}
\newtheorem{thm}{Theorem}[section]
\newtheorem{exp-thm}[thm]{Expected Theorem}
\newtheorem{exp-lem}[thm]{Expected Lemma}
\newtheorem{exp-cor}[thm]{Expected Corollary}
\newtheorem{lem}[thm]{Lemma}
\newtheorem{prop}[thm]{Proposition}    
\newtheorem{cor}[thm]{Corollary}
\newtheorem{defin}[thm]{Definition}
\theoremstyle{remark}                 
\newtheorem{remark}[thm]{Remark}
\newcommand{\R}{\mathbb{R}}
\newcommand{\N}{\mathbb{N}}
\renewcommand{\epsilon}{\varepsilon}
\newcommand{\e}{\varepsilon}
\renewcommand{\leq}{\leqslant}
\renewcommand{\le}{\leqslant}
\renewcommand{\geq}{\geqslant}
\renewcommand{\ge}{\geqslant}
\DeclareMathOperator*{\diverg}{div}
\def\cleardoublepage{\clearpage\if@twoside \ifodd\c@page\else
	\hbox{}
	\thispagestyle{empty}
	\newpage
	\if@twocolumn\hbox{}\newpage\fi\fi\fi}
\numberwithin{equation}{section}
\begin{document}
	
	\title{Lipschitz regularity of almost minimizers in one-phase problems driven by the $p$-Laplace operator}
	\author{Serena Dipierro}
	\address{Serena Dipierro:  Department of Mathematics and Statistics, University of Western Australia, 35 Stirling Hwy, Crawley WA 6009	
		Australia}
	\email{serena.dipierro@uwa.edu.au }
	\author{Fausto Ferrari}
	\address{Fausto Ferrari: Dipartimento di Matematica\\ Universit\`a di Bologna\\ Piazza di Porta S.Donato 5\\ 40126, Bologna-Italy}
	\email{fausto.ferrari@unibo.it }
	\author{Nicol\`o Forcillo}
	\address{Nicol\`o Forcillo: Dipartimento di Matematica\\ Universit\`a di Bologna\\ Piazza di Porta S.Donato 5\\ 40126, Bologna-Italy}
	\email{nicolo.forcillo2@unibo.it }
	\author{Enrico Valdinoci}
	\address{Enrico Valdinoci: Department of Mathematics and Statistics, University of Western Australia, 35 Stirling Hwy, Crawley WA 6009	
		Australia }
	\email{enrico.valdinoci@uwa.edu.au  }
	\thanks{S.D. and E.V. are members of AustMS. F.F. and N.F. are members of INdAM.
		S.D.  has been supported by
		the Australian Research Council DECRA DE180100957
		``PDEs, free boundaries and applications''.
F.F. and N.F. have been supported by 2022- INDAM- GNAMPA project {\it Regolarit\`a locale e globale per problemi completamente nonlineari.}
N.F. has been supported by GHAIA Horizon 2020 MCSA
RISE programme grant No 777822.	
E.V. has been supported by
		the Australian Laureate Fellowship
		FL190100081 ``Minimal surfaces, free boundaries and partial differential equations''.
		Part of this work has been completed during a very pleasant visit of N.F.
		to the University of Western Australia, that we thank for the warm hospitality.
The authors wish to thank Daniela De Silva and Ovidiu Savin for useful discussions.
	}
	\date{\today}
	
	\maketitle
	
	\begin{abstract}We prove that, given~$p>\max\left\{\frac{2n}{n+2},1\right\}$, the nonnegative
	almost minimizers of the nonlinear free boundary functional
	$$		J_p(u,\Omega):=\int_{\Omega}\Big( |\nabla u(x)|^p+\chi_{\{u>0\}}(x)\Big)\,dx$$
are Lipschitz continuous.
\end{abstract}
	
	\section{Introduction}

In this article we consider a nonlinear free boundary problem and we establish the Lipschitz continuity
of its almost minimizers. The classical motivations for free boundary problems of these
types stem from  flows with jets and cavities (see e.g. Section~1.1 in~\cite{MR2145284}).
In this context, the conditions arising from a free boundary problem can be seen
as the variational counterpart
of Bernoulli's law according to which pressure is prescribed on the free streamline
as a balance with the velocity (or the kinetic energy) of the fluid.

The nonlinear feature of the corresponding differential operator
aims at modeling possibly non-Newtonian fluids, in which linear relations between physical quantities
are replaced by power-laws.

Similar models also appear in the study of electrical impedance tomography (see~\cite{MR1607608}),
optimal heat flows (see~\cite{MR473960}), electrochemical machining (see~\cite{MR983738}) and
high activation energy in combustion theory (see~\cite{MR1044809, MR1976085, MR2237691}).
Also, free boundary problems can be used as a sharp-interface approximation
of phase coexistence models (see e.g.~\cite{MR2126143, MR2139200, MR2262703}, roughly speaking
as a replacing for phases modeled by a smooth state parameter with a merely continuous one).\medskip

The setting of almost minimizers is also classical in the calculus of variations
(dating back, at least up to a certain extent, to a famous sentence in Leibniz's {\em Specimen Geometriae Luciferae},
probably written in the mid-1690s, ``pro minimis adhiberi possunt quasi minima'', that is
``the almost minimizers can be exploited in place of minimizers'').\medskip

More specifically, the mathematical setting that we consider here goes as follows.
Let 
	 $\Omega\subset\R^n$ be a given domain and $p>\max\left\{\frac{2n}{n+2},1\right\}$.
	We consider the energy functional
	\begin{equation}\label{defjp}
		J_p(u,\Omega):=\int_{\Omega}\Big( |\nabla u(x)|^p+\chi_{\{u>0\}}(x)\Big)\,dx
	\end{equation}
	for all $u\in W^{1,p}(\Omega)$ with $u\ge0$.
	
The condition that~$u$ is nonnegative corresponds, in the framework of free boundary problems,
to considering ``one-phase'' solutions (solutions which may change sign being related to ``two-phase'' problems).

\medskip

The precise notion of almost minimizers that we use in this paper is the following one:

\begin{defin}\label{DE:QA} Let~$\kappa\ge0$ and~$\beta>0$. We say that~$u\in W^{1,p}(\Omega)$ is an almost minimizer for~$J_p$ in~$\Omega$,
		with constant~$\kappa$ and exponent~$\beta$, if~$u\geq 0$ a.e. in~$\Omega$ and
		\begin{equation}\label{inequality-of-J-p-u-alm-min}
			J_p(u,B_\varrho(x))\leq(1+\kappa \varrho^\beta)J_p(v,B_\varrho(x)),
		\end{equation}
		for every ball~$B_\varrho(x)$ such that~$\overline{B_\varrho(x)}\subset \Omega$ and for every~$v\in W^{1,p}(B_\varrho(x))$ such that~$v=u$ on~$\partial B_\varrho(x)$ in the sense of the trace.
	\end{defin}
		
In some sense, Definition~\ref{DE:QA} is one of the possible modern formalizations
of Leibniz's initial intuition reported at the beginning of this paper: namely,
almost minimizers are natural objects to look at, for instance, to deal with minimizers
of ``perturbed'' functionals. As a concrete example, if we consider
$$ \widetilde J_p(u,\Omega):=J_p(u,\Omega)+\iint_{\Omega \times\Omega}\Phi(u(y))\,\Phi(u(z))\,\Phi(u(y)-u(z))\,dy\,dz$$
for a function~$\Phi:\R\to[0,1]$ with~$\Phi=0$ in~$(-\infty,0]$, we readily see that~$ J_p(u,B_r(x))\le\widetilde J_p(u,B_r(x))$
and
\begin{eqnarray*}&&
\widetilde J_p(u,B_r(x))\le J_p(u,B_r(x))+
\iint_{B_r(x) \times B_r(x)}\chi_{\{u>0\}}(y)\,\chi_{\{u>0\}}(z)\,dy\,dz\\&&\qquad\le
J_p(u,B_r(x))+|B_r|\int_{B_r(x) }\chi_{\{u>0\}}(y)\,dy
\le (1+|B_1|\,r^n)J_p(u,B_r(x)).
\end{eqnarray*}
Accordingly, a minimizer for the ``complicated'' functional~$\widetilde J_p$ turns out
to be an almost minimizer for the ``simpler'' functional~$J_p$.\medskip

As usual,
the constants depending only on~$n$ and~$p$ are called universal. If~$u$ is an almost minimizer, the structural constants may depend on~$\kappa$ and~$\beta$ as well.\medskip

Our main result establishes the Lipschitz regularity of the one-phase almost minimizers as follows:
	
		\begin{thm}\label{theor-Lipsch-contin-alm-minim}
		Let~$p>\max\left\{\frac{2n}{n+2},1\right\}$ and~$u$ be an almost minimizer
		for~$J_p$ in~$B_1$ with constant~$\kappa$ and exponent~$\beta$. 

Then,
		\[\left\|\nabla u\right\|_{L^{\infty}(B_{1/2})}\le C\Big(\left\|\nabla u\right\|_{L^p(B_1)}+1\Big),\]
		where~$C>0$ is a constant depending on~$n$, $p$,
		$\kappa$ and~$\beta$.
		
		In addition, $u$ is uniformly Lipschitz continuous in a neighborhood of~$\left\{u=0\right\}$,
		namely if~$u(0)=0$ then
		\[\left|\nabla u\right|\le C\quad \mbox{in }B_{r_0},\]
		for some~$C>0$, depending only on~$n$, $p$, $\kappa$ and~$\beta$, and~$r_0\in(0,1)$, depending
		on~$n$, $p$, $\kappa$, $\beta$ and~$\|\nabla u\|_{L^p(B_1)}$.
\end{thm}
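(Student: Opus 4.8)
The plan is to deduce the Lipschitz estimate from a \emph{linear growth bound at the free boundary} --- $\sup_{B_r(x_0)}u\le L\,r$ whenever $u(x_0)=0$, with $L$ eventually universal below a suitable scale --- combined with the interior regularity of $u$ in $\{u>0\}$. I would first record the standard preliminaries: almost minimizers of $J_p$ are locally bounded and $C^{0,\alpha}_{\mathrm{loc}}$ (by a Caccioppoli inequality on superlevel sets, obtained testing \eqref{inequality-of-J-p-u-alm-min} with the truncations $\min\{u,k\}$, followed by a De Giorgi iteration --- this is where the Sobolev embedding, hence $p>\tfrac{2n}{n+2}$, enters), and near a zero of $u$ this bound is quantitative in terms of $\|\nabla u\|_{L^p(B_1)}$ thanks to the usual comparison and density estimates. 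Moreover, on any ball on which $u>0$, $J_p$ reduces to the $p$-Dirichlet integral: for a competitor $v=u$ on $\partial B_\rho(x)$ one may take $v\ge0$, and then $\chi_{\{v>0\}}\le1=\chi_{\{u>0\}}$ on $B_\rho(x)$ turns \eqref{inequality-of-J-p-u-alm-min} into
\[
\int_{B_\rho(x)}|\nabla u|^p\le(1+\kappa\rho^\beta)\int_{B_\rho(x)}|\nabla v|^p+\kappa\,|B_1|\,\rho^{n+\beta},
\]
so $u$ is an $\omega$-minimizer of $\int|\nabla\cdot|^p$ in $\{u>0\}$; the classical $C^{1,\alpha}$ theory for such $\omega$-minimizers, together with the corresponding Caccioppoli inequality, then gives the scale-invariant bound
\[
\sup_{B_{d/2}(y)}|\nabla u|\le\frac{C}{d}\,\sup_{B_d(y)}u+C\qquad\text{whenever }B_d(y)\subset\{u>0\}\cap B_1 .
\]

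The core step is a dichotomy: there are universal $\kappa_0>0$, $C_0\ge1$ such that if $u$ is an almost minimizer in $B_1$ with constant $\le\kappa_0$ and $u(0)=0$, then either $\sup_{B_{1/2}}u\le\tfrac14\sup_{B_1}u$ or $\sup_{B_1}u\le C_0$. I would prove this by compactness: if almost minimizers $u_k$ violated both alternatives with $M_k:=\sup_{B_1}u_k\to\infty$, then $w_k:=u_k/M_k$ almost-minimize $\int|\nabla w|^p+M_k^{-p}\chi_{\{w>0\}}$, whose volume term has vanishing coefficient; the uniform Caccioppoli and H\"older bounds give equicontinuity, so along a subsequence $w_k\to w_\infty\ge0$ locally uniformly and weakly in $W^{1,p}_{\mathrm{loc}}$, and testing almost-minimality against $p$-harmonic replacements at almost every radius upgrades this to strong $W^{1,p}_{\mathrm{loc}}$-convergence and identifies $w_\infty$ as a nonnegative $\omega$-minimizer of $\int|\nabla\cdot|^p$, hence one satisfying the Harnack inequality and strong minimum principle; since $w_\infty(0)=0$ this forces $w_\infty\equiv0$, contradicting $\sup_{B_{1/2}}w_k>\tfrac14$.

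Rescaling the dichotomy to scale $r$ (the rescaled constant $\kappa r^\beta$ being $\le\kappa_0$ for $r\le r_*:=\min\{1,(\kappa_0/\kappa)^{1/\beta}\}$) gives, for $x_0\in\{u=0\}$ and $r\le r_*$, the alternative $\sup_{B_{r/2}(x_0)}u\le\tfrac14\sup_{B_r(x_0)}u$ or $\sup_{B_r(x_0)}u\le C_0r$. Iterating along $r_j=2^{-j}r_*$ with $T_j:=\sup_{B_{r_j}(x_0)}u$: while $T_j>C_0r_j$ the first alternative holds, so $T_{j+1}\le\tfrac14T_j$, hence $T_j\le4^{-j}T_0$; since $4^{-j}T_0<C_0r_j$ once $2^j>T_0/(C_0r_*)$, after finitely many steps one reaches and then remains in $\{T_j\le C_0r_j\}$. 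This yields $\sup_{B_r(x_0)}u\le2\max\{T_0/r_*,C_0\}\,r$ for all $r\le r_*$, with $T_0=\sup_{B_{r_*}(x_0)}u\le C(\|\nabla u\|_{L^p(B_1)}+1)$, and moreover $\sup_{B_r(x_0)}u\le2C_0r$ once $r\le r_{x_0}$, where $r_{x_0}$ depends on $T_0$ --- hence on $\|\nabla u\|_{L^p(B_1)}$ --- while $r_*$ and $C_0$ are controlled by $n,p,\kappa,\beta$. Finally, for $x\in B_{1/2}$: if $\{u=0\}\cap B_1=\emptyset$ the bound is immediate from the interior estimate above; otherwise let $x_0$ be a nearest zero of $u$ and $d=\dist(x,\{u=0\})$. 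If $d\le r_*/2$, the interior gradient bound on $B_d(x)\subset\{u>0\}$ together with $\sup_{B_d(x)}u\le\sup_{B_{2d}(x_0)}u$ and the linear growth at $x_0$ gives $|\nabla u(x)|\le C\max\{T_0/r_*,C_0\}+C\le C(\|\nabla u\|_{L^p(B_1)}+1)$; if $d>r_*/2$ one uses the interior estimate directly on $B_{r_*/2}(x)$. This is the first assertion. If in addition $u(0)=0$, the same computation performed for $x$ in a ball $B_{r_0}$ whose radius is a small fixed multiple of $\inf_{x_0}r_{x_0}$ --- so that the \emph{universal} growth $\sup_{B_{2d}(x_0)}u\le2C_0(2d)$ applies --- yields $|\nabla u(x)|\le C$ with $C=C(n,p,\kappa,\beta)$, and here $r_0$ depends in addition on $\|\nabla u\|_{L^p(B_1)}$, exactly as stated.

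The main obstacle is the compactness step: as almost minimizers solve no Euler--Lagrange equation, one must show the blow-up sequence converges \emph{strongly} in $W^{1,p}_{\mathrm{loc}}$ to a genuine $\omega$-minimizer of the $p$-Dirichlet integral --- done by comparing $w_k$ with $p$-harmonic replacements on almost every ball and invoking the uniform Caccioppoli/H\"older estimates --- while, throughout, keeping careful track of which constants are universal and which may depend on $\|\nabla u\|_{L^p(B_1)}$, since precisely this bookkeeping produces the two forms of the conclusion.
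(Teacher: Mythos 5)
Your proposal takes a genuinely different route from the paper. The paper works directly with the renormalized Dirichlet energy $a(r):=\big(\fint_{B_r}|\nabla u|^p\big)^{1/p}$: Proposition~\ref{proposition-dichotomy} gives a quantitative dichotomy (either $a(\eta r)\le a(r)/2$, or $\nabla u$ is $\varepsilon a$-close in $L^p$-mean to a nonzero constant vector $q$ with $|q|\sim a$), which Lemma~\ref{lemma-second-alternative-dichotomy-improved} and Corollary~\ref{corollary-lemma-second-alternative-dichotomy-improved} upgrade --- through the $p$-harmonic replacement, Lemma~\ref{lemma:unifell}, and Campanato estimates --- into a full $C^{1,\gamma}$ bound plus positivity of $u$. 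The Lipschitz theorem then follows by iterating this energy dichotomy on the scales $\eta^k$, with no compactness, no supremum bounds on $u$, and no separate treatment of $\{u>0\}$ versus the free boundary. You instead propose (i) an interior $C^{1,\alpha}$ theory for $\omega$-minimizers of $\int|\nabla\cdot|^p$ inside $\{u>0\}$, (ii) a blow-up/compactness dichotomy on $\sup u$ at free-boundary points, and (iii) iteration to linear growth. Conceptually this is the Alt--Caffarelli blueprint and is a legitimate strategy (it is close to the $p=2$ schemes in David--Toro and De Silva--Savin), but it trades the paper's explicit constants for an ineffective compactness argument and decomposes the estimate into two pieces that must then be glued, whereas the paper's single energy-decay iterate handles both regimes simultaneously.

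There are, however, concrete gaps you would need to close. First, your base case $T_0=\sup_{B_{r_*}(x_0)}u\le C\big(\|\nabla u\|_{L^p(B_1)}+1\big)$ at a zero $x_0$ is asserted ``by the usual comparison and density estimates,'' but for $p\le n$ (which is allowed, since $p>\max\{\tfrac{2n}{n+2},1\}$ permits $p<n$) one cannot bound $\sup u$ by $\|\nabla u\|_{L^p}$ using that $u$ vanishes at a single point: a Poincar\'e inequality requires $\{u=0\}$ to have positive measure in a definite proportion, i.e.\ a density estimate, which for \emph{almost} minimizers of the $p$-functional is itself a nontrivial lemma not established here. Second, the interior $C^{1,\alpha}$ and scale-invariant gradient bound for $\omega$-minimizers of $\int|\nabla u|^p$ is invoked as ``classical,'' but for $p\neq2$ the degeneracy/singularity of $|\nabla u|^{p-2}$ makes this a substantial ingredient --- it is essentially what the paper is forced to re-derive via Lemmas~\ref{lemma-p-harm-replac} and~\ref{lemma:unifell} in the nondegenerate regime $|q|\sim a$, precisely because a clean off-the-shelf version is not available. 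Third, the strong $W^{1,p}_{\mathrm{loc}}$ convergence and identification of the blow-up limit as $p$-harmonic (needed for the strong minimum principle step) is flagged by you as the main obstacle; the sketch via $p$-harmonic replacements and Lemma~\ref{lemma-p-harm-replac} is correct in spirit but requires showing that the replacements of $w_k$ converge to the replacement of $w_\infty$, an additional stability argument. None of these is a fatal flaw, but together they mean the proposal as written is a plan rather than a proof, and it leans on background theory that the paper deliberately reconstructs because it is not standard in the $p\neq2$, almost-minimizer setting.
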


We stress that Theorem~\ref{theor-Lipsch-contin-alm-minim} is new for~$p\ne2$, the case~$p=2$ being treated in~\cite{DS}.
\medskip

We recall that, when~$p=2$, minimizers of~\eqref{defjp} were studied in~\cite{MR618549}, where the
	Lipschitz regularity
	of minimizers and the regularity of flat free boundaries were established.
	In~\cite{MR990856, MR973745, MR1029856}
	Caffarelli developed a viscosity approach to the free boundary problem for~$p=2$ in the two-phase
	setting (see also~\cite{D} and the references therein for related
	free boundary regularity properties). For one-phase problems the viscosity approach for operators governed by the~$p$-Laplace or~$p(x)-$Laplace operators
	has been developed in~\cite{GleRic, FL}
	and in~\cite{FL2} as well, where Lipschitz regularity of viscosity solutions of inhomogeneous one-phase problems, governed by the~$p(x)-$Laplace operator and some regularity properties of their free boundaries were established.\medskip
		
	The Lipschitz regularity of the minimizers of the functional in~\eqref{defjp} has been obtained
	in~\cite{MR3771123}, which has also provided a proof of the Lipschitz
	regularity when~$p=2$ without using
	monotonicity formulae.
We refer to~\cite{MR3910196} as well, where a discrete version of the Weiss monotonicity formula
	for the functional in~\eqref{defjp}
	has been established for~$p$ close to~$2$.  \medskip
	
	Minimizers of the functional in~\eqref{defjp} have been also considered in~\cite{MR2133664},
	in which the regularity of the free boundary near flat points was established.
	See also~\cite{MR2250499, MR2383537, MR2431665, MR2680176, MR2876248, MR3168631, MR3249814, MR3390082, MR4266232} for regularity results on~$p$-Laplacian free boundary problems.
	\medskip
	
Regarding the setting of almost minimizers, the case~$p=2$ has been investigated
	in~\cite{MR3385167, MR3948692, DS}.
	The case of almost minimizers for~$p\ne2$ was, to the best of our knowledge, not fully investigated, hence Theorem~\ref{theor-Lipsch-contin-alm-minim} aims at starting a research line in this direction,
and the results presented in this article are part of the PhD thesis of the third author~\cite{F}.
In this paper we take inspiration from the approach used in~\cite{DS},
see also~\cite{MR4062979, MR4201786}.
\medskip

We point out that the case~$p\ne2$ provides significant technical complications,
especially due to the fact that the sum of two solutions is not a solution any longer,
thus making it difficult to develop an exhaustive theory of harmonic replacements in the nonlinear scenario.
\medskip

We highlight the fact that the regularity results put forth in this paper do not follow from the classical ones in the calculus of variations, since the integrand of the functional that we consider here is a discontinuous function, due to the presence of the term~$\chi_{\{u>0\}}$ (instead, the classical cases dealt with require the integrand to be continuous, or even Lipschitz continuous, see e.g. assumption~(8.48) in~\cite{G}).

\bigskip

	The paper is organized as follows. In Section~\ref{KMD-2rkXmfTO} we recall the notion of
$p$-harmonic replacement and put forth some basic energy estimates needed in our main arguments.

Then, in Section~\ref{DSIKD:ICH}, we develop a dichotomy theory according to which, roughly speaking,
the average of the energy of an almost minimizer decreases in a smaller ball, unless we are arbitrarily close
to the case of linear functions.

In Section~\ref{sec:lip1} we show that
	almost minimizers of~$J_p$ are Lipschitz continuous, namely we prove Theorem~\ref{theor-Lipsch-contin-alm-minim}.
	
\section{The $p$-harmonic replacement}\label{KMD-2rkXmfTO}

One of the classical ingredients in nonlinear partial differential
equations is the notion of~$p$-harmonic replacement, which we now recall:
	
	\begin{defin}
		Let~$r>0$, $x_0\in\R^n$ and~$u\in W^{1,p}(B_r(x_0))$. We say that~$v\in W^{1,p}(B_r(x_0))$ is the~$p$-harmonic replacement of~$u$ in~$B_r(x_0)$ if
		\[\int_{B_r(x_0)}\left|\nabla v\right|^p \,dx=\min_{u-w\in W^{1,p}_0(B_r(x_0))}\int_{B_r(x_0)}\left|\nabla w\right|^p \,dx.\]
	\end{defin}
	
	We remark that if~$v$ is the~$p$-harmonic replacement of~$u$ in~$B_r(x_0)$, then in particular it satisfies
	\begin{equation}\label{weak-definition-of-p-harmonic-function}
		\int_{B_r(x_0)}\left|\nabla v(x)\right|^{p-2}\nabla v(x)\cdot \nabla\varphi(x)\,dx=0
	\end{equation}
	for all~$\varphi\in W^{1,p}_{0}(B_r(x_0))$, namely~$v$ is a weak solution of~$\Delta_p v=0$
	in~$B_r(x_0)$.
	
	We now provide some energy estimates of classical flavor for the~$p$-harmonic replacement
	that we will use in the sequel.
	
	\begin{lem}\label{lemma-p-harm-replac}
		Let~$x_0\in\mathbb{R}^{n}$, $r>0$ and~$u\in W^{1,p}(B_r(x_0))$.
		Let~$v$ be the~$p$-harmonic replacement of~$u$ in~$B_r(x_0)$.
		Then,
		\begin{itemize}
			\item[(i)] if~$1<p<2,$ then
			\begin{equation}\label{first-ineq-lemma-p-harm-repl}\begin{split} &
					\int_{B_r(x_0)}\left|\nabla u(x)-\nabla v(x)\right|^p\,dx\\
					&\qquad\le C\left(\int_{B_r(x_0)}\left(\left|\nabla u(x)\right|^p-\left|\nabla v(x)\right|^p\right)\,dx\right)^{\frac{p}{2}}
					\left(\int_{B_r(x_0)}\big(\left|\nabla u(x)\right|+\left|\nabla v(x)\right|\big)^p\,dx\right)^{1-\frac{p}{2}},\end{split}
			\end{equation}
			for some positive universal constant~$C$;
			\item[(ii)] if~$p\ge 2,$ then
			\begin{equation}\label{second-ineq-lemma-p-harm-repl}
				\int_{B_r(x_0)}\left|\nabla u(x)-\nabla v(x)\right|^p\,dx
				\le C\int_{B_r(x_0)}\big(\left|\nabla u(x)\right|^p-\left|\nabla v(x)\right|^p\big)\,dx,
			\end{equation}
			for some positive universal constant~$C$.
		\end{itemize}		
	\end{lem}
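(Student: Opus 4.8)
The plan is to derive both estimates from two ingredients: the classical monotonicity inequalities for the vector field $\xi\mapsto|\xi|^{p-2}\xi$, and the fact that $u-v\in W^{1,p}_0(B_r(x_0))$ is an admissible test function in the weak formulation~\eqref{weak-definition-of-p-harmonic-function} of $\Delta_p v=0$.

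First I would record the two elementary pointwise inequalities: there is a universal $c>0$ such that, for all $a,b\in\R^n$,
\[\big(|a|^{p-2}a-|b|^{p-2}b\big)\cdot(a-b)\ge c\,|a-b|^p\quad\text{if }p\ge2,\]
\[\big(|a|^{p-2}a-|b|^{p-2}b\big)\cdot(a-b)\ge c\,\frac{|a-b|^2}{\big(|a|+|b|\big)^{2-p}}\quad\text{if }1<p<2,\]
where in the second line the right-hand side is understood to be $0$ when $a=b=0$; these are standard. In parallel, the convexity of $\xi\mapsto|\xi|^p$ gives, pointwise a.e.,
\[p\,|\nabla u|^{p-2}\nabla u\cdot(\nabla u-\nabla v)\le|\nabla u|^p-|\nabla v|^p.\]

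Next, since $u=v$ on $\partial B_r(x_0)$ in the sense of traces, we have $u-v\in W^{1,p}_0(B_r(x_0))$, so choosing $\varphi=u-v$ in~\eqref{weak-definition-of-p-harmonic-function} yields $\int_{B_r(x_0)}|\nabla v|^{p-2}\nabla v\cdot(\nabla u-\nabla v)\,dx=0$. Subtracting this null term and integrating the convexity inequality, I obtain
\[\int_{B_r(x_0)}\big(|\nabla u|^{p-2}\nabla u-|\nabla v|^{p-2}\nabla v\big)\cdot(\nabla u-\nabla v)\,dx\le\frac1p\int_{B_r(x_0)}\big(|\nabla u|^p-|\nabla v|^p\big)\,dx;\]
note the right-hand side is nonnegative because $v$ minimizes the $p$-Dirichlet energy among functions agreeing with $u$ on $\partial B_r(x_0)$, so in particular $\int_{B_r(x_0)}|\nabla v|^p\le\int_{B_r(x_0)}|\nabla u|^p$. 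For $p\ge2$, plugging the first pointwise inequality into the left-hand side immediately gives~\eqref{second-ineq-lemma-p-harm-repl}.

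For $1<p<2$ there is one additional step, a H\"older interpolation. Setting $w:=|\nabla u|+|\nabla v|$ and writing
\[|\nabla u-\nabla v|^p=\frac{|\nabla u-\nabla v|^p}{w^{p(2-p)/2}}\cdot w^{p(2-p)/2}\]
(the first factor being $0$ where $w=0$), H\"older's inequality with exponents $2/p$ and $2/(2-p)$ gives
\[\int_{B_r(x_0)}|\nabla u-\nabla v|^p\,dx\le\left(\int_{B_r(x_0)}\frac{|\nabla u-\nabla v|^2}{w^{2-p}}\,dx\right)^{\frac p2}\left(\int_{B_r(x_0)}w^p\,dx\right)^{1-\frac p2}.\]
Estimating the first factor by the second pointwise inequality and then by the displayed energy bound produces~\eqref{first-ineq-lemma-p-harm-repl}. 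The only points requiring care are the usual bookkeeping ones---finiteness of all the integrals (granted by $u\in W^{1,p}$ and the minimality bound on $\nabla v$) and the harmless behaviour on $\{\nabla u=\nabla v=0\}$---together with the precise choice of the weight $w^{p(2-p)/2}$, which has to be made exactly so that the H\"older exponents $2/p$ and $2/(2-p)$ reconstruct the powers appearing in~\eqref{first-ineq-lemma-p-harm-repl}; this interpolation is the step I expect to be the most delicate.
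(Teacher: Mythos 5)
Your argument contains a sign error that is fatal to the derivation. You assert that convexity of $\xi\mapsto|\xi|^p$ gives, pointwise,
\[p\,|\nabla u|^{p-2}\nabla u\cdot(\nabla u-\nabla v)\le|\nabla u|^p-|\nabla v|^p,\]
but the supporting-line inequality $f(b)\ge f(a)+\nabla f(a)\cdot(b-a)$ with $f(\xi)=|\xi|^p$, $a=\nabla u$, $b=\nabla v$ gives precisely the \emph{opposite} inequality:
\[p\,|\nabla u|^{p-2}\nabla u\cdot(\nabla u-\nabla v)\ \ge\ |\nabla u|^p-|\nabla v|^p.\]
(Test it with $\nabla v=0$: the left side is $p|\nabla u|^p$, the right side is $|\nabla u|^p$, and $p>1$.) Consequently, after subtracting the null term $\int|\nabla v|^{p-2}\nabla v\cdot(\nabla u-\nabla v)=0$, you obtain a \emph{lower} bound on $\int(|\nabla u|^{p-2}\nabla u-|\nabla v|^{p-2}\nabla v)\cdot(\nabla u-\nabla v)$ in terms of $\int(|\nabla u|^p-|\nabla v|^p)$, not the upper bound you claim. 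Since the monotonicity inequality also gives a lower bound on that same quantity (in terms of $\int|\nabla u-\nabla v|^p$), the two do not chain into the desired estimate, and the proof breaks at this point. There is also no simple integral-level fix: the quantity $\int(|\nabla u|^{p-2}\nabla u-|\nabla v|^{p-2}\nabla v)\cdot(\nabla u-\nabla v)$ is not in general controlled from above by $C\int(|\nabla u|^p-|\nabla v|^p)$ via pointwise convexity alone.

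The paper avoids this issue entirely by writing the exact identity
\[\int\big(|\nabla u|^p-|\nabla v|^p\big)\,dx=p\int_0^1\!\!\int|\nabla u_s|^{p-2}\nabla u_s\cdot\nabla(u-v)\,dx\,ds,\qquad u_s:=su+(1-s)v,\]
then subtracting the null term, factoring $\nabla(u_s-v)=s\,\nabla(u-v)$, and applying the monotonicity inequality inside the $s$-integral; the integral $\int_0^1 s^{p-1}\,ds$ (or $\int_0^1 s\,ds$ for $p<2$) supplies the constant. If you want to salvage your route without the $s$-family, the correct pointwise ingredient is the \emph{strong convexity} (Bregman-divergence) inequality at the point $\nabla v$, namely
\[|\nabla u|^p-|\nabla v|^p-p|\nabla v|^{p-2}\nabla v\cdot(\nabla u-\nabla v)\ \ge\ c\,|\nabla u-\nabla v|^p\quad(p\ge2),\]
with the analogous $(|\nabla u|+|\nabla v|)^{p-2}|\nabla u-\nabla v|^2$ version for $1<p<2$; integrating this and using the weak formulation to kill the middle term gives the lemma directly, and then your H\"older interpolation step for $1<p<2$ goes through as written. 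That is a genuinely shorter argument than the paper's, but it requires invoking the Bregman-type inequality rather than the monotonicity inequality $\big(|a|^{p-2}a-|b|^{p-2}b\big)\cdot(a-b)\gtrsim\ldots$ that you actually cite.
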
	
	
	\begin{proof}
		For all~$s\in[0,1]$, we consider the family of functions~$
		u_s(x):=su(x)+(1-s)v(x)$.
		Notice that~$u_0=v$ and~$u_1=u$. As a consequence,
		\begin{align*}
			\int_{B_r(x_0)}\Big(\left|\nabla u(x)\right|^p-\left|\nabla v(x)\right|^p\Big)\,dx
			&=\int_{B_r(x_0)}\left(\int_0^1\frac{d}{ds}\left|\nabla u_s(x)\right|^p\,ds\right)\,dx\\
			&=\int_{B_r(x_0)}\left(\int_0^1 p\left|\nabla u_s(x)\right|^{p-2}\nabla u_s(x)\cdot \nabla(u-v)(x)\,ds\right)\,dx.
		\end{align*}
		This and~\eqref{weak-definition-of-p-harmonic-function} (used here with~$\varphi:=u-v$) lead to
		\begin{align*}
			&\int_{B_r(x_0)}\Big(\left|\nabla u(x)\right|^p-\left|\nabla v(x)\right|^p\Big)\,dx
			\\=\;&p\,\left[\int_{B_r(x_0)}\left(\int_0^1\left|\nabla u_s(x)\right|^{p-2}
			\nabla u_s(x)\cdot \nabla(u-v)(x)\,ds\right)\,dx\right.
			\\&\qquad\qquad\qquad \left.
			-\int_0^1\left(\int_{B_r(x_0)}\left|\nabla v(x)\right|^{p-2}\nabla v(x)\cdot \nabla(u-v)(x)\,dx\right)\,ds\right]\\
			=\;&p\int_0^1\bigg(\int_{B_r(x_0)}\left(\left|\nabla u_s(x)\right|^{p-2}\nabla u_s(x)
			-\left|\nabla v(x)\right|^{p-2}\nabla v(x)\right)\cdot\nabla(u-v)(x)\,dx\bigg)\,ds.
		\end{align*}
		We also point out that
		\begin{equation}\label{difference-u-^-s-v}
			u_s(x)-v(x)=s(u(x)-v(x)),
		\end{equation}
		and therefore we arrive at
		\begin{equation}\begin{split}\label{lower-bound-integral-nabla-u-nabla-v-1}
				&\int_{B_r(x_0)}\Big(\left|\nabla u(x)\right|^p-\left|\nabla v(x)\right|^p\Big)\,dx\\
				=\;&p\int_0^1\frac{1}{s}\bigg(\int_{B_r(x_0)}\left(\left|\nabla u_s(x)\right|^{p-2}\nabla u_s(x)
				-\left|\nabla v(x)\right|^{p-2}\nabla v(x)\right)\cdot\nabla(u_s-v)(x)\,dx\bigg)\,ds.
		\end{split}\end{equation}
		
		Now we recall the inequality 
		\[\big(\left|\xi\right|^{p-2}\xi-\left|\zeta\right|^{p-2}\zeta\big)\cdot(\xi-\zeta)\ge \gamma
		\begin{cases}
			\left|\xi-\zeta\right|^2\left(\left|\xi\right|+\left|\zeta\right|\right)^{p-2}&\mbox{if }1<p< 2,\\
			\left|\xi-\zeta\right|^p&\mbox{if }p\ge2,
		\end{cases}\]
		for any~$\xi$, $\zeta\in \mathbb{R}^n\setminus\{0\}$, for some
		positive universal constant~$\gamma$, see e.g. page~100 in~\cite{MR2133664}.
		Thus, using this inequality with~$\xi:=\nabla u_s$ and~$\zeta:=\nabla v$
		into~\eqref{lower-bound-integral-nabla-u-nabla-v-1}, we obtain that
		\begin{equation*}\begin{split}
				&\int_{B_r(x_0)}\Big(\left|\nabla u(x)\right|^p-\left|\nabla v(x)\right|^p\Big)\,dx\\
				\ge\;& 
				\begin{cases}
					\displaystyle p\,\gamma\int_0^1\frac{1}{s}\left(\int_{B_r(x_0)}\left|\nabla u_s(x)
					-\nabla v(x)\right|^2(\left|\nabla u_s(x)\right|
					+\left|\nabla v(x)\right|)^{p-2}\,dx\right)\,ds,&\; \mbox{if }1<p< 2,
					\\
					\displaystyle p\,\gamma\int_0^1\frac{1}{s}\left(\int_{B_r(x_0)}\left|\nabla u_s(x)-\nabla v(x)\right|^p \,dx\right)\,ds,&\;
					\mbox{if }p\ge 2.
				\end{cases}
		\end{split}\end{equation*}
		Furthermore, recalling~\eqref{difference-u-^-s-v}, we conclude that
		\begin{equation}\begin{split}\label{lower-bound-integral-nabla-u-nabla-v-2}
				&\int_{B_r(x_0)}\Big(\left|\nabla u(x)\right|^p-\left|\nabla v(x)\right|^p\Big)\,dx\\
				\ge\;& 
				\begin{cases}
					\displaystyle p\,\gamma\int_0^1 {s}\left(\int_{B_r(x_0)}\left|\nabla u(x)
					-\nabla v(x)\right|^2(\left|\nabla u_s(x)\right|
					+\left|\nabla v(x)\right|)^{p-2}\,dx\right)\,ds,&\; \mbox{if }1<p< 2,
					\\
					\displaystyle p\,\gamma\int_0^1 {s}^{p-1}\left(\int_{B_r(x_0)}\left|\nabla u(x)-\nabla v(x)\right|^p \,dx\right)\,ds,&\;
					\mbox{if }p\ge 2.
				\end{cases}
		\end{split}\end{equation}
		
		Now, if~$1<p<2$, since~$s\in[0,1]$ we have that
		$$\left|\nabla u_s\right|+\left|\nabla v\right|\le s\left|\nabla u\right|+(1-s)\left|\nabla v\right|+\left|\nabla v\right|=s\left|\nabla u\right|+(2-s)\left|\nabla v\right| {\le} 2\,\big(\left|\nabla u\right|+\left|\nabla v\right|\big).
		$$
		Hence, plugging this information into~\eqref{lower-bound-integral-nabla-u-nabla-v-2}, we find that		
		\begin{equation}\label{lower-bound-integral-nabla-u-nabla-v-1-<-p-leq-2-1}\begin{split}
				&\int_{B_r(x_0)}\Big(\left|\nabla u(x)\right|^p-\left|\nabla v(x)\right|^p\Big)\,dx\\
				\ge\;& p\,\gamma\, 2^{p-2}\int_0^1 {s}\left(\int_{B_r(x_0)} \left|\nabla u(x)-\nabla v(x)\right|^2
				\big(\left|\nabla u(x)\right|+\left|\nabla v(x)\right|\big)^{p-2}\,dx\right)\,ds\\
				=\;&p\,\gamma\, 2^{p-3} \int_{B_r(x_0)}\left|\nabla u(x)-\nabla v(x)\right|^2\big(
				\left|\nabla u(x)\right|+\left|\nabla v(x)\right|\big)^{p-2}\,dx.
		\end{split}\end{equation}
		
		We now apply the H\"older's inequality with H\"older exponent~$2/p$ and conjugate exponent
		\[\left(\frac{2}{p}\right)'=\frac{2/p}{2/p-1}=\frac{2}{2-p},\]
		to see that
		\begin{align*}
			&\int_{B_r(x_0)}\left|\nabla u(x)-\nabla v(x)\right|^p\,dx\\
			=\;&\int_{B_r(x_0)}\left|\nabla u(x)-\nabla v(x)\right|^p\big(
			\left|\nabla u(x)\right|+\left|\nabla v(x)\right|\big)^{\frac{p(p-2)}{2}}\big(
			\left|\nabla u(x)\right|+\left|\nabla v(x)\right|\big)^{-\frac{p(p-2)}{2}}\,dx\\
			\le\;&\left(\int_{B_r(x_0)}\left|\nabla u(x)-\nabla v(x)\right|^{2}\big(
			\left|\nabla u(x)\right|+\left|\nabla v(x)\right|\big)^{p-2}\,dx\right)^{\frac{p}{2}}\\&\qquad\qquad\times
			\left(\int_{B_r(x_0)}\big(\left|\nabla u(x)\right|+\left|\nabla v(x)\right|\big)^p\,dx\right)^{1-\frac{p}{2}}.
		\end{align*}
		This and~\eqref{lower-bound-integral-nabla-u-nabla-v-1-<-p-leq-2-1} yield that
		\begin{align*}
			&\int_{B_r(x_0)}\left|\nabla u(x)-\nabla v(x)\right|^p\,dx\\
			\le \;& C\left(\int_{B_r(x_0)}\big(\left|\nabla u(x)\right|^p-\left|\nabla v(x)\right|^p\big)\, 
			dx\right)^{\frac{p}{2}}\left(\int_{B_r(x_0)}\big(\left|\nabla u(x)\right|+\left|\nabla v(x)\right|\big)^p\,dx\right)^{1-\frac{p}{2}},
		\end{align*}
		for some universal constant~$C>0$, which proves the desired result in~\eqref{first-ineq-lemma-p-harm-repl} when~$1<p<2$.
		
		If instead~$p\ge 2$,
		we deduce from~\eqref{lower-bound-integral-nabla-u-nabla-v-2} that
		\begin{align*}
			\int_{B_r(x_0)}\big(\left|\nabla u(x)\right|^p-\left|\nabla v(x)\right|^p\big)\,dx
			&\ge p\,\gamma\int_0^1 {s}^{p-1}\left(\int_{B_r(x_0)}
			\left|\nabla u(x)-\nabla v(x)\right|^p \,dx\right)\,ds\\
			&=\gamma \int_{B_r(x_0)}\left|\nabla u(x)-\nabla v(x)\right|^p\,dx,
		\end{align*}
		which establishes~\eqref{second-ineq-lemma-p-harm-repl} when~$p\ge2$ and completes
		the proof of Lemma~\ref{lemma-p-harm-replac}.
	\end{proof}

\section{Some	dichotomy results}\label{DSIKD:ICH}

With the preliminary work carried out so far,
we can now provide a dichotomy statement. Roughly speaking,
either the average of the energy of an almost minimizer decreases in a smaller ball,
or the distance of its gradient and a suitable constant vector becomes as small as we wish
(that is, linear functions are the ``only ones for which the average does not improve in small balls'').
The precise result goes as follows:

	\begin{prop}\label{proposition-dichotomy}
		There exists~$\e_0\in(0,1)$ such that for every~$\varepsilon\in(0,\e_0)$
		there exist~$\eta\in(0,1)$,  $M\ge1$ and~$\sigma_0\in(0,1)$, depending on~$\e$, $n$ and~$p$, such that
		if~$\sigma\in[0, \sigma_0]$ and~$a\ge M$ then the following statement holds true.
		
		Let~$u\in W^{1,p}(B_1)$ 
		be such that
		\begin{equation}\label{inequality-of-J-p-u-sigma}
			J_p(u,B_1)\le(1+\sigma)J_p(v,B_1)
		\end{equation}
		for all~$v\in W^{1,p}(B_1)$ such that~$v=u$ on~$\partial B_1$, with
		\begin{equation}\label{average-integral-nabla-u-to-p-B-1}
			a:=\left(\fint_{B_1}\left|\nabla u(x)\right|^p \,dx\right)^{1/p}.
		\end{equation}
		Then, either
		\begin{equation}\label{first-conclusion-dichotomy}
			\left(\fint_{B_\eta}\left|\nabla u(x)\right|^p \,dx\right)^{1/p}\le \frac{a}{2},
		\end{equation}
		or
		\begin{equation}\label{second-conclusion-dichotomy} 
			\left(\fint_{B_\eta}\left|\nabla u(x)-q\right|^p \,dx\right)^{1/p}\le \varepsilon a,
		\end{equation}
		with~$q\in \mathbb{R}^n$ such that 
		\begin{equation}\label{bound-q}
			\frac{a}{4}<\left|q\right|\le C_0\,a,
		\end{equation}
		and~$C_0>0$ universal.
	\end{prop}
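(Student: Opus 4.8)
The plan is to argue directly, comparing $u$ with its $p$-harmonic replacement and then invoking the interior $C^{1,\alpha}$ regularity of $p$-harmonic functions. Let $v$ be the $p$-harmonic replacement of $u$ in $B_1$; in particular $v=u$ on $\partial B_1$, so $v$ is admissible in~\eqref{inequality-of-J-p-u-sigma}, and by minimality $\int_{B_1}|\nabla v|^p\,dx\le\int_{B_1}|\nabla u|^p\,dx=a^p|B_1|$. Since moreover $\int_{B_1}\chi_{\{v>0\}}\,dx\le|B_1|$ and $\int_{B_1}\chi_{\{u>0\}}\,dx\ge0$, plugging $v$ into~\eqref{inequality-of-J-p-u-sigma} and rearranging (recall $\sigma\le1$) yields the energy gap estimate
\[\int_{B_1}\big(|\nabla u(x)|^p-|\nabla v(x)|^p\big)\,dx\le|B_1|\,\big(\sigma a^p+2\big).\]

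Next I would feed this into Lemma~\ref{lemma-p-harm-replac}. When $p\ge2$, \eqref{second-ineq-lemma-p-harm-repl} gives directly $\fint_{B_1}|\nabla u-\nabla v|^p\,dx\le C(\sigma a^p+2)$. When $1<p<2$, \eqref{first-ineq-lemma-p-harm-repl} also requires the crude bound $\int_{B_1}\big(|\nabla u|+|\nabla v|\big)^p\,dx\le2^p a^p|B_1|$ (again using $\int_{B_1}|\nabla v|^p\le\int_{B_1}|\nabla u|^p$), and after using the subadditivity of $t\mapsto t^{p/2}$ one is led to a bound of the shape $\fint_{B_1}|\nabla u-\nabla v|^p\,dx\le C\sigma^{\kappa_1}a^p+C a^{\kappa_2}$ for universal constants $\kappa_1\in(0,1]$ and $\kappa_2\in[0,p)$ (for $p\ge2$ one simply has $\kappa_1=1$ and $\kappa_2=0$). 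Passing to the smaller ball $B_\eta$, with $\eta<1/4$ still to be chosen, costs a factor $\eta^{-n}$, so that
\[\Big(\fint_{B_\eta}|\nabla u-\nabla v|^p\,dx\Big)^{1/p}\le C\eta^{-n/p}\sigma^{\kappa_1/p}\,a+C\eta^{-n/p}\,a^{\kappa_2/p},\]
where $\kappa_2/p<1$.

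Now I would bring in the regularity of $v$: being $p$-harmonic, it satisfies the interior gradient bound $|\nabla v(0)|\le C\big(\fint_{B_1}|\nabla v|^p\,dx\big)^{1/p}\le C_0\,a$, with $C_0$ universal, and the interior $C^{1,\alpha}$ estimate $\fint_{B_\eta}|\nabla v-\nabla v(0)|^p\,dx\le C\eta^{\alpha p}a^p$. Setting $q:=\nabla v(0)$ and combining with the previous display via the triangle inequality,
\[\Big(\fint_{B_\eta}|\nabla u(x)-q|^p\,dx\Big)^{1/p}\le C\eta^{\alpha}a+C\eta^{-n/p}\sigma^{\kappa_1/p}a+C\eta^{-n/p}a^{\kappa_2/p}.\]
The parameters are then chosen in this order. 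Put $\delta:=\min\{\varepsilon,1/8\}$. First pick $\eta<1/4$, depending on $\varepsilon$, $n$, $p$, so that $C\eta^{\alpha}\le\delta/3$; then pick $\sigma_0\in(0,1)$ so that $C\eta^{-n/p}\sigma^{\kappa_1/p}\le\delta/3$ whenever $\sigma\le\sigma_0$; and finally, since $\kappa_2/p<1$, pick $M\ge1$ large so that $C\eta^{-n/p}a^{\kappa_2/p-1}\le\delta/3$ whenever $a\ge M$. With these choices $\big(\fint_{B_\eta}|\nabla u-q|^p\,dx\big)^{1/p}\le\delta a$.

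Finally I would split according to the size of $|q|$. If $|q|\le a/4$, the triangle inequality gives $\big(\fint_{B_\eta}|\nabla u|^p\,dx\big)^{1/p}\le\delta a+|q|\le\tfrac{a}{8}+\tfrac{a}{4}<\tfrac{a}{2}$, which is~\eqref{first-conclusion-dichotomy}. If instead $|q|>a/4$, then $\tfrac{a}{4}<|q|\le C_0 a$, which is~\eqref{bound-q}, while $\big(\fint_{B_\eta}|\nabla u-q|^p\,dx\big)^{1/p}\le\delta a\le\varepsilon a$, which is~\eqref{second-conclusion-dichotomy}; here any $\varepsilon_0\in(0,1)$ is admissible. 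The one genuinely delicate point is the $1<p<2$ case of the comparison step, where Lemma~\ref{lemma-p-harm-replac} produces a \emph{multiplicative} rather than additive bound: one has to track the powers of $a$ to be sure that the unwanted contribution is a strictly sublinear power $a^{\kappa_2/p}$ with $\kappa_2/p<1$, since only then can the choice $a\ge M$ absorb it. The rest is bookkeeping, bearing in mind that $\eta$ must be fixed before $\sigma_0$ and $M$.
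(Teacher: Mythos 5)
Your proposal is correct and follows essentially the same route as the paper's proof: take the $p$-harmonic replacement $v$, use the almost-minimality inequality to bound $\int_{B_1}(|\nabla u|^p-|\nabla v|^p)$, feed this into Lemma~\ref{lemma-p-harm-replac}, invoke the interior $\sup$ bound and $C^{1,\alpha}$ estimate for $p$-harmonic functions to control $q:=\nabla v(0)$ and $\fint_{B_\eta}|\nabla v-q|^p$, then choose $\eta$, $\sigma_0$, $M$ in that order and split according to $|q|\lessgtr a/4$. Your bookkeeping via $\delta:=\min\{\varepsilon,1/8\}$ is a slight streamlining of the paper's explicit choices of $M$ and $\sigma_0$, and you correctly identify the only delicate point ($1<p<2$, where $\kappa_2/p=1-p/2<1$ must be strictly below $1$ so that $a\ge M$ can absorb the term).
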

	
	\begin{proof}
		Let~$v$ be the~$p$-harmonic replacement of~$u$ in~$B_1$. 
		By Theorem~3.19 in~\cite{MZ} we have that, for every~$x\in B_{1/2}$,
		\begin{align*}
			\left|\nabla v(x)\right|^p&\le \sup_{B_{1/4}(x)}\left|\nabla v\right|^p\le C\fint_{B_{1/2}(x)}\left|\nabla v(y)\right|^p\,dy\le C\fint_{B_1}\left|\nabla u(y)\right|^p\,dy.
		\end{align*}
		As a consequence, for all~$x\in B_{1/2}$,
		\begin{equation}\label{bound-norm-nabla-v}
			\left|\nabla v(x)\right|\le C_0\,a,
		\end{equation}
		for some positive universal constant~$C_0$.
		
		Accordingly,	we denote by~$q:=\nabla v(0)$ and we deduce from~\eqref{bound-norm-nabla-v}
		that~$|q|\le C_0\,a$. This, together with Theorem~2 in~\cite{Man}, gives that, for all~$\eta\in(0,1/2]$,
		\begin{equation}\label{average-integral-nabla-v-q}\begin{split}
				\fint_{B_\eta}\left|\nabla v(x)-q\right|^p\,dx\le\;&
				\fint_{B_\eta}\bigg(C\bigg(\frac{\eta}{1/2}\bigg)^{\alpha}\left\|\nabla v\right\|_{L^{\infty}(B_{1/2})}\bigg)^p \,dx\\
				\le\;& C\,\eta^{\alpha p} \left\|\nabla v\right\|_{L^{\infty}(B_{1/2})}^p
				\\ \le\;& C_1\,\eta^{\alpha p}\,a^p,
		\end{split}\end{equation}
		for some~$\alpha\in(0,1)$ and~$C_1>0$ universal.
		
		Furthermore, using~\eqref{inequality-of-J-p-u-sigma}
		we have that
		\begin{equation}\label{scbevturyuitryutyu6y75868430}\begin{split}
				\int_{B_1}\big(\left|\nabla u(x)\right|^p-\left|\nabla v(x)\right|^p\big)\,dx		
				&\le J_p(u,B_1)-\int_{B_1}\left|\nabla v(x)\right|^p\,dx \\
				&\le  (1+\sigma)J_p(v,B_1)-\int_{B_1}\left|\nabla v(x)\right|^p\,dx \\
				&\le C\left(\sigma\int_{B_1}\left|\nabla v(x)\right|^p\,dx+1\right)\\
				&\le C\left(\sigma\int_{B_1}\left|\nabla u(x)\right|^p\,dx+1\right)
		\end{split}\end{equation}
		We distinguish two cases, $p\ge2$ and~$1<p<2$. 
		
		If~$p\ge 2$, then from~\eqref{second-ineq-lemma-p-harm-repl} and~\eqref{scbevturyuitryutyu6y75868430}
		we deduce that
		\begin{align*}
			\int_{B_1}\left|\nabla u(x)-\nabla v(x)\right|^p\,dx			
			&\le C\left(\sigma\int_{B_1}\left|\nabla u(x)\right|^p\,dx+1\right).
		\end{align*}
		Taking the average over~$B_1$, we thereby obtain that
		\begin{equation*}
			\fint_{B_1}\left|\nabla u(x)-\nabla v(x)\right|^p\,dx\le C(\sigma a^p+1).
		\end{equation*}
		Using this and~\eqref{average-integral-nabla-v-q}, we get that
		\begin{equation}\label{average-integral-nabla-u-q-p->-2}\begin{split}
				\fint_{B_\eta}\left|\nabla u(x)-q\right|^p\,dx &\le2^{p-1}
				\fint_{B_\eta}\big(\left|\nabla u(x)-\nabla v(x)\right|^p+\left|\nabla v(x)-q\right|^p\big)\,dx\\
				&\le 2^{p-1}\left(\frac{\left|B_1\right|}{\left|B_\eta\right|}\fint_{B_1}\left|\nabla u(x)-\nabla v(x)\right|^p\,dx+C_1\,\eta^{\alpha p}\,a^p\right)
				\\&\le 2^{p-1} \Big(C\eta^{-n}(\sigma a^p+1)+C_1a^p\eta^{\alpha p}\Big),\\
				&= 2^{p-1}C \eta^{-n}\sigma a^p+2^{p-1}C\eta^{-n}+2^{p-1}C_1a^p\eta^{\alpha p}.
			\end{split}
		\end{equation}
		This yields that
		\begin{equation}\label{average-integral-nabla-u-to-p-B-eta-p->-2}
			\fint_{B_\eta}\left|\nabla u(x)\right|^p\,dx\le 2^{2(p-1)}C\eta^{-n}\sigma a^p+2^{2(p-1)}C\eta^{-n}+2^{2(p-1)}C_1a^p\eta^{\alpha p}+2^{p-1}\left|q\right|^p.
		\end{equation}
		
		If instead~$1<p< 2$, by virtue of~\eqref{first-ineq-lemma-p-harm-repl}
		and~\eqref{scbevturyuitryutyu6y75868430}, we obtain that
		\begin{align*}
			&\int_{B_1}\left|\nabla u(x)-\nabla v(x)\right|^p\,dx\\
			\le\;& C\left(\sigma
			\int_{B_1}\left|\nabla u(x)\right|^p\,dx+1\right)^{\frac{p}{2}}
			\left(\int_{B_1}\big(\left|\nabla u(x)\right|+\left|\nabla v(x)\right|\big)^p\,dx\right)^{1-\frac{p}{2}}\\
			\le\;& C \left(\sigma^{\frac{p}{2}}\left(\int_{B_1}\left|\nabla u(x)\right|^p\,dx\right)^{\frac{p}{2}}+1\right)\left(\int_{B_1}\big(\left|\nabla u(x)\right|^p+\left|\nabla v(x)\right|^p\big)\,dx\right)^{1-\frac{p}{2}}.
		\end{align*}
		Thus, since~$v$ is the~$p$-harmonic replacement of~$u$ in~$B_1$,
		\begin{align*}
			&\int_{B_1}\left|\nabla u(x)-\nabla v(x)\right|^p\,dx\\
			\le\;& C\left(\sigma^{\frac{p}{2}}\left(\int_{B_1}\left|\nabla u(x)\right|^p\,dx\right)^{\frac{p}{2}}+1\right)
			\left(\int_{B_1}\left|\nabla u(x)\right|^p\,dx\right)^{1-\frac{p}{2}}\\
			=\;&C \left(\sigma^{\frac{p}{2}}\int_{B_1}\left|\nabla u(x)\right|^p\,dx+\left(\int_{B_1}\left|\nabla u(x)\right|^p\,dx\right)^{1-\frac{p}{2}}\right).
		\end{align*}
		Consequently, taking the average integral, we have that
		\begin{equation*}
			\fint_{B_1}\left|\nabla u(x)-\nabla v(x)\right|^p\,dx\le C\left(
			\sigma^{\frac{p}{2}}a^p+|B_1|^{-\frac{p}{2}}a^{p\left(1-\frac{p}{2}\right)}\right)
			\le C\left(\sigma^{\frac{p}{2}}a^p+a^{p\left(1-\frac{p}{2}\right)}\right).
		\end{equation*}
		{F}rom this and~\eqref{average-integral-nabla-v-q}, we obtain that
		\begin{equation}\label{average-integral-nabla-u-q-1-<p-leq-2}\begin{split}
				\fint_{B_\eta}\left|\nabla u(x)-q\right|^p\,dx\le\;& 2^{p-1}
				\fint_{B_\eta}\big(\left|\nabla u(x)-\nabla v(x)\right|^p+\left|\nabla v(x)-q\right|^p\big)\,dx\\
				\le\;& 2^{p-1}\left(\frac{\left|B_1\right|}{\left|B_\eta\right|}\fint_{B_1}\left|\nabla u(x)-\nabla v(x)\right|^p\,dx+C_1\,\eta^{\alpha p}\,a^p\right)\\
				\le \;&
				2^{p-1}C\eta^{-n}\sigma^{\frac{p}{2}}a^p+2^{p-1}C\eta^{-n}a^{p\left(1-\frac{p}{2}\right)}
				+2^{p-1}C_1a^p\eta^{\alpha p}.
		\end{split}\end{equation}
		This gives that
		\begin{equation}\label{average-integral-nabla-u-to-p-B-eta-1<-p-leq-2}
			\fint_{B_\eta}\left|\nabla u(x)\right|^p\,dx\le 2^{2(p-1)}C\eta^{-n}\sigma^{\frac{p}{2}} a^p+2^{2(p-1)}C\eta^{-n}
			a^{p\left(1-\frac{p}{2}\right)}+2^{2(p-1)}C_1a^p\eta^{\alpha p}+2^{p-1}\left|q\right|^p.
		\end{equation}
		
		Now, 
		given~$\varepsilon_0\in(0, 1/4]$, we claim that 
		for every~$\varepsilon\in (0,\epsilon_0)$ there exists~$\eta$ small enough (depending on~$\varepsilon$)
		such that if~$\sigma$ is chosen sufficiently small and~$a$ sufficiently large (depending on~$\eta$,
		and thus on~$\e$) then
		\begin{equation}\label{conditions-on-eta-sigma-a}
			\begin{cases}
				2^{2(p-1)}C\eta^{-n}\sigma a^p+2^{2(p-1)}C\eta^{-n}+2^{2(p-1)}C_1a^p\eta^{\alpha p}\le 2^{p-1}\varepsilon^pa^p\le \frac{a^p}{2^{p+1}}\\
				\mbox{if }p\ge 2,\\
				\\
				2^{2(p-1)}C\eta^{-n}\sigma^{\frac{p}{2}} a^p+2^{2(p-1)}C\eta^{-n}
				a^{p\left(1-\frac{p}{2}\right)}+2^{2(p-1)}C_1a^p\eta^{\alpha p}\le 2^{p-1}\varepsilon^pa^p\le \frac{a^p}{2^{p+1}}\\
				\mbox{if }1<p< 2.
			\end{cases}
		\end{equation}
To prove this we distinguish two cases. If~$p\ge 2,$ we pick~$\eta>0$ sufficiently small
such that~$\varepsilon^p-2^{p-1}C\eta> 2^{p-1}C_1\eta^{\alpha p}$. This allows us to define
		$$
		M:=\left(\frac{2^{p-1}C\eta^{-n}}{\varepsilon^p-2^{p-1}C\eta-2^{p-1}C_1\eta^{\alpha p}}\right)^{1/p}
		.$$
		Note also that we can suppose~$M\ge1$ by taking~$\eta$ small enough.
		Let also 
		$$ \sigma_0:=\eta^{n+1}.$$ With this setting, we obtain that, for every~$a\geq M$ and for every~$0<\sigma\leq \sigma_0,$
		\begin{eqnarray*}&&
2^{2(p-1)}C\eta^{-n}\sigma a^p+ 2^{2(p-1)}C\eta^{-n}+ 2^{2(p-1)}C_1a^p\eta^{\alpha p}\\&\leq
			& a^p\Big(  2^{2(p-1)}C\eta+ 2^{2(p-1)}C_1\eta^{\alpha p}\Big)
			+ 2^{2(p-1)}C\eta^{-n}\\&=&
	a^p\Big(  2^{2(p-1)}C\eta+ 2^{2(p-1)}C_1\eta^{\alpha p}\Big)
			+2^{p-1}\,M^p\Big(\varepsilon^p-2^{p-1}C\eta-2^{p-1}C_1\eta^{\alpha p}\Big)
			\\
			&\leq& a^p\Big(  2^{2(p-1)}C\eta+ 2^{2(p-1)}C_1\eta^{\alpha p}\Big)
			+2^{p-1}\,a^p\Big(\varepsilon^p-2^{p-1}C\eta-2^{p-1}C_1\eta^{\alpha p}\Big)\\&=&
			2^{p-1}\varepsilon^p a^p+ a^p
			\Big(  2^{2(p-1)}C\eta+ 2^{2(p-1)}C_1\eta^{\alpha p}
			- 2^{2(p-1)} C\eta- 2^{2(p-1)}C_1\eta^{\alpha p}\Big)\\&=&2^{p-1}\varepsilon^p a^p,
		\end{eqnarray*}
		which proves~\eqref{conditions-on-eta-sigma-a} when~$p\ge 2$.
		
		If instead~$1<p<2,$ we pick~$\eta>0$ small enough such that~$\varepsilon^p>2^{p-1}C\eta+2^{p-1}C_1\eta^{\alpha p}.$ In this way, we can define
		$$M:=
		\left(\frac{C2^{p-1}\eta^{-n}}{\varepsilon^p-2^{p-1}C\eta-2^{p-1}C_1\eta^{\alpha p}}\right)^{2/p^2}.$$
		Let also~$\sigma_0:=\eta^{(n+1)\frac{2}{p}}.$
Then, whenever~$a\geq M$ and~$0<\sigma\leq \sigma_0$ it follows that
		\begin{eqnarray*}
			&& 2^{2(p-1)}C\eta^{-n}\sigma^{\frac{p}{2}}a^p+ 2^{2(p-1)}C\eta^{-n} a^{p\left(1-\frac{p}{2}\right)}+ 2^{2(p-1)}C_1a^p\eta^{\alpha p}\\&\leq& 2^{2(p-1)}C\eta a^p+ 2^{2(p-1)}C\eta^{-n} a^{p\left(1-\frac{p}{2}\right)
			}+ 2^{2(p-1)}C_1a^p\eta^{\alpha p}\\&=&
			2^{p-1} a^p\Big( 2^{p-1}C\eta + 2^{p-1}C\eta^{-n} a^{-\frac{p^2}{2}	}+ 2^{p-1}C_1\eta^{\alpha p}\Big)\\&\leq&
2^{p-1} a^p\Big( 2^{p-1}C\eta + 2^{p-1}C\eta^{-n} M^{-\frac{p^2}{2}	}+ 2^{p-1}C_1\eta^{\alpha p}\Big)\\&=&
			2^{p-1} a^p\Big( 2^{p-1}C\eta + \varepsilon^p- 2^{p-1}C\eta- 2^{p-1}C_1\eta^{\alpha p}
			+ 2^{p-1}C_1\eta^{\alpha p}\Big)\\&=&  2^{p-1}\e^pa^p,
		\end{eqnarray*}
		which establishes~\eqref{conditions-on-eta-sigma-a} in the case~$1<p<2$ as well.
		
		In order to complete the proof of Proposition~\ref{proposition-dichotomy},
		we now distinguish two cases according to the size of~$\left|q\right|.$ More precisely, we first suppose that
		\[\left|q\right|\le\frac{a}{4}.\]
		Then, we use either~\eqref{average-integral-nabla-u-to-p-B-eta-p->-2} (if~$p\ge2$) or~\eqref{average-integral-nabla-u-to-p-B-eta-1<-p-leq-2} (if~$1<p<2$), and~\eqref{conditions-on-eta-sigma-a} to conclude that
		\begin{align*}
			\fint_{B_\eta}\left|\nabla u(x)\right|^p\,dx\le \frac{a^p}{2^{p+1}}+ 2^{p-1}\frac{a^p}{2^{2p}}=\frac{a^p}{2^{p+1}}+\frac{a^p}{2^{p+1}}=\frac{a^p}{ 2^{p}},
		\end{align*}
		and thus
		\[\left(\fint_{B_\eta}\left|\nabla u(x)\right|^p\,dx\right)^{1/p}\le\frac{a}{2},\]
		which is the first alternative in~\eqref{first-conclusion-dichotomy}.
		
		Otherwise, it holds that
		\[ \frac{a}{4}<\left|q\right|\le C_0\,a,\]
		and therefore, by either~\eqref{average-integral-nabla-u-q-p->-2} (if~$p\ge2$) or~\eqref{average-integral-nabla-u-q-1-<p-leq-2} (if~$1<p<2$),
		and~\eqref{conditions-on-eta-sigma-a}, we have that
		\[\left(\fint_{B_\eta}\left|\nabla u(x)-q\right|^p\,dx\right)^{1/p}\le \varepsilon a,\]
		which is the second alternative in~\eqref{second-conclusion-dichotomy}.
		The proof of Proposition~\ref{proposition-dichotomy} is thereby complete.
	\end{proof}	
	
	We will now show that the alternative in~\eqref{second-conclusion-dichotomy}
	can be ``improved" when~$\varepsilon$ and~$\sigma$ are sufficiently small.
	This result is the counterpart of Lemma~2.3 in~\cite{DS} in the more general setting dealt with in this paper.
	The main difficulty here with respect to Lemma~2.3 in~\cite{DS} relies on the fact that
	the problem is not linear when~$p\neq2$, and therefore,
	if~$v_1$ and~$v_2$ are the~$p$-harmonic replacements of~$u_1$
	and~$u_2$ in~$B_1$, then it is not true that~$v_1+v_2$
	is the~$p$-harmonic replacement of~$u_1+u_2$, unless~$p=2$. 
	
	We will overcome this difficulty by showing that a ``uniformly elliptic'' equation
	is still satisfied from the sum of~$p$-harmonic replacements.
	More precisely, we will show the following result:
	
	\begin{lem}\label{lemma:unifell}
		Let~$q\in\R^n$ and let~$\eta:\R^n\to\R^n$ be such that~$|\eta(x)|<\frac{|q|}2$.
		Let~$F(z):=|z|^{p-2}z$ and
		\[ A(x):=\int_0^1DF\big(q+t\eta(x) \big)\,dt. \]
		
		Then,
		$$ \lambda\,|q|^{p-2}\le A\xi \cdot \xi \le\Lambda\, |q|^{p-2},$$
		for all~$\xi\in\partial B_1$, for some~$\Lambda\ge\lambda>0$, depending on~$p$.
	\end{lem}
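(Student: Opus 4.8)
The plan is to reduce the statement to an explicit computation of the quadratic form associated with $DF$, combined with the elementary observation that the whole segment joining $q$ to $q+\eta(x)$ stays inside the annulus $\{\,\frac{|q|}{2}\le|z|\le\frac{3|q|}{2}\,\}$, where $F$ is smooth and non-degenerate.

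First I would compute $DF$ explicitly. Since $\nabla\big(|z|^{p-2}\big)=(p-2)|z|^{p-4}z$ for $z\ne0$, one gets
\[ DF(z)=|z|^{p-2}\,\mathrm{Id}+(p-2)\,|z|^{p-4}\,z\otimes z, \]
and hence, for every $\xi\in\partial B_1$,
\[ DF(z)\,\xi\cdot\xi=|z|^{p-2}+(p-2)\,|z|^{p-4}\,(z\cdot\xi)^2. \]
Using the Cauchy--Schwarz inequality $0\le(z\cdot\xi)^2\le|z|^2$ and distinguishing the sign of $p-2$ (one keeps the cross term when estimating from above if $p\ge2$ and from below if $1<p<2$, and simply drops it otherwise), this yields the pointwise two-sided estimate
\[ \min\{1,\,p-1\}\,|z|^{p-2}\ \le\ DF(z)\,\xi\cdot\xi\ \le\ \max\{1,\,p-1\}\,|z|^{p-2},\qquad z\ne0,\ \xi\in\partial B_1. \]

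Next I would exploit the hypothesis $|\eta(x)|<\frac{|q|}{2}$ (which in particular forces $q\ne0$, so that every expression above is meaningful). For $t\in[0,1]$ the triangle inequality gives $|q|-|\eta(x)|\le|q+t\eta(x)|\le|q|+|\eta(x)|$, hence
\[ \frac{|q|}{2}\ \le\ |q+t\eta(x)|\ \le\ \frac{3|q|}{2}. \]
In particular $q+t\eta(x)$ is bounded away from the origin, so $t\mapsto DF(q+t\eta(x))$ is continuous on $[0,1]$ and $A(x)$ is well defined. Since $s\mapsto s^{p-2}$ is monotone on $(0,\infty)$, the displayed bound on $|q+t\eta(x)|$ implies that $|q+t\eta(x)|^{p-2}$ lies between $c(p)\,|q|^{p-2}$ and $C(p)\,|q|^{p-2}$, with $c(p):=\min\{2^{2-p},(3/2)^{p-2}\}$ and $C(p):=\max\{2^{2-p},(3/2)^{p-2}\}$. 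Plugging this into the quadratic-form estimate above and integrating in $t$ over $[0,1]$ gives
\[ \lambda\,|q|^{p-2}\ \le\ A(x)\,\xi\cdot\xi\ \le\ \Lambda\,|q|^{p-2},\qquad\xi\in\partial B_1, \]
with $\lambda:=\min\{1,p-1\}\,c(p)>0$ and $\Lambda:=\max\{1,p-1\}\,C(p)\ge\lambda$, both depending only on $p$.

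There is no real obstacle here: the argument is a pointwise linear-algebra identity followed by a one-line triangle-inequality bound on $|q+t\eta(x)|$, plus the harmless remark that, thanks to $|\eta(x)|<\frac{|q|}{2}$, none of the relevant points ever hits the origin, so no singularity of $F$ is approached. The only point deserving a moment of care is the bookkeeping of the sign of $p-2$ when passing from the identity for $DF(z)\xi\cdot\xi$ to the inequalities, which is precisely what makes the ellipticity constants $\lambda$ and $\Lambda$ come out written in terms of $\min\{1,p-1\}$ and $\max\{1,p-1\}$.
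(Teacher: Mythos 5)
Your proof is correct and follows essentially the same route as the paper: compute $DF(z)=|z|^{p-2}\mathrm{Id}+(p-2)|z|^{p-4}z\otimes z$, bound $DF(z)\xi\cdot\xi$ between $\min\{1,p-1\}\,|z|^{p-2}$ and $\max\{1,p-1\}\,|z|^{p-2}$ by Cauchy--Schwarz and the sign of $p-2$, and then use the triangle-inequality comparison $\tfrac{|q|}{2}\le|q+t\eta|\le\tfrac{3|q|}{2}$ to transfer these bounds to $A$. The paper writes the same constants using the positive-part notation $(p-2)^+$ and $(2-p)^+$, which is equivalent to your $\min/\max$ formulation.
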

	
	\begin{proof}
		We notice that, for all~$t\in(0,1)$,
		\begin{equation}\begin{split}\label{sod30v9b54y8b67vb985 78by75y89}
				&|q+t\eta|\le |q|+|\eta|< |q|+\frac{|q|}2=\frac{3|q|}2\\
				{\mbox{and }}\quad& |q+t\eta|\ge |q|-|\eta|>|q|-\frac{|q|}2=\frac{|q|}2.
		\end{split}\end{equation}
		
		Furthermore, we observe that
		$$
		DF(z)=(p-2)|z|^{p-4}z\otimes z +|z|^{p-2}{\text{Id}},
		$$
		where~${\text{Id}}$ denotes the identity matrix in dimension~$n$, and therefore, for all~$\xi\in \partial B_1$,
		\begin{eqnarray*}
			DF(z)\xi \cdot\xi&=&(p-2)|z|^{p-4}(z\cdot\xi)^2+|z|^{p-2}|\xi|^2\\&\ge&
			-(2-p)^+|z|^{p-2}+|z|^{p-2}|\xi|^2\\&=&\big[1-(2-p)^+\big]\,|z|^{p-2}|\xi|^2
			\\&=&\big[1-(2-p)^+\big]\,|z|^{p-2}
			.\end{eqnarray*}
		Consequently,	
		\begin{eqnarray*}
			&& A\xi \cdot \xi\ge \big[1-(2-p)^+\big]
			\int_0^1|q+t\eta|^{p-2} \,dt.
		\end{eqnarray*}
		Accordingly, we can use~\eqref{sod30v9b54y8b67vb985 78by75y89} to find that
		$$ A\xi \cdot \xi\ge \frac{1-(2-p)^+}{2^{p-2}}  \,|q|^{p-2}
		$$
		if~$p\ge2$, and
		$$ A\xi \cdot \xi\ge \big[1-(2-p)^+\big]\left(\frac32\right)^{p-2}|q|^{p-2}
		$$
		if~$p\in(1,2)$.
		
		Similarly, since
		\begin{eqnarray*}
			DF(z)\xi \cdot\xi&\le&
			(p-2)^+|z|^{p-2}+|z|^{p-2}|\xi|^2\\&=&\big[1+(p-2)^+\big]\,|z|^{p-2}|\xi|^2
			\\&=&\big[1+(p-2)^+\big]\,|z|^{p-2}
			,\end{eqnarray*}
		we have that
		\begin{eqnarray*}
			&& A\xi \cdot \xi\le \big[1+(p-2)^+\big]
			\int_0^1|q+t\eta|^{p-2} \,dt.
		\end{eqnarray*}
		Hence, making again use of~\eqref{sod30v9b54y8b67vb985 78by75y89},
		$$ A\xi \cdot \xi\le \big[1+(p-2)^+\big] \left(\frac32\right)^{p-2}|q|^{p-2}
		$$
		if~$p\ge2$, and
		$$ A\xi \cdot \xi\le\frac{1+(p-2)^+}{2^{p-2}}\,|q|^{p-2}
		$$
		if~$p\in(1,2)$.
		
		These considerations complete the proof of Lemma~\ref{lemma:unifell}.
	\end{proof}
	
	With this, we can now state the following result:
	
	\begin{lem}\label{lemma-second-alternative-dichotomy-improved}
		Let~$a_1>a_0>0$ and
		\begin{equation}\label{poiqwerthfbgihyouoiuo}
		p>\max\left\{\frac{2n}{n+2},1\right\}.\end{equation}
		
		There exist~$\alpha_0\in(0,1]$ and~$\widetilde{C}>0$, depending on~$n$ and~$p$, such that
		for every~$\alpha\in(0,\alpha_0)$ there exist
		\begin{itemize}\item
		$\rho\in(0,1)$, depending on~$n$, $p$ and~$\alpha$,
		\item $\varepsilon_0\in(0,1)$, depending on~$n$, $p$, $a_0$, $a_1$ and~$\alpha$,
		\item $c_0>0$, depending on~$n$, $p$, $a_0$, $a_1$ and~$\alpha$,
		\end{itemize}
		such that, if~$\varepsilon\in(0, \varepsilon_0]$ and~$\sigma\in(0, c_0\varepsilon^P]$, with~$P:=\max\{p,2\}$,
then the following statement holds true.
		
		Let~$u\in W^{1,p}(B_1)$ be such that~$u\ge0$ in~$B_1$ and
		\begin{equation}\label{2.8BIS}
			J_p(u,B_1)\le (1+\sigma) J_p(v,B_1)\end{equation}
		for all~$v\in W^{1,p}(B_1)$ such that~$v=u$ on~$\partial B_1$.

Let
\begin{equation}\label{3.15BIS} a:=\left(\fint_{B_1}\left|\nabla u(x)\right|^p \,dx\right)^{1/p}\end{equation}
		and suppose that
		\begin{equation}\label{316BIS}
		a\in [a_0,a_1].\end{equation}
		
		Assume also that
		\begin{equation}\label{second-alternative-dichotomy}
			\left(\fint_{B_1}\left|\nabla u(x)-q\right|^p \,dx\right)^{1/p}\le \varepsilon a,
		\end{equation}
		for some~$q\in \mathbb{R}^n$ such that
		\begin{equation}\label{estimate-norm-q}
			\frac{a}{8}<|q|\le 2C_0a,
		\end{equation}
		where~$C_0>0$ is the universal constant given by Proposition~\ref{proposition-dichotomy}.
		
		 Then  
		\begin{equation}\label{so3cer5b56859tj45ivnt45-1}
			\left(\fint_{B_\rho}\left|\nabla u(x)-\widetilde{q}\right|^p\,dx\right)^{1/p}\le\rho^{\alpha}\varepsilon a,
		\end{equation}
		with~$\widetilde{q}\in \mathbb{R}^n$ such that
		\begin{equation}\label{so3cer5b56859tj45ivnt45-2}
			\left|q-\widetilde{q}\right|\le \widetilde{C}\varepsilon a.
		\end{equation}
	\end{lem}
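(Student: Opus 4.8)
The argument follows the scheme of Lemma~2.3 in~\cite{DS}; the new difficulty is that for~$p\neq2$ the comparison map is only~$C^{1,\alpha}$ and, superposition failing, the decay of its gradient has to be extracted from the linearized equation for its derivatives, which is uniformly elliptic thanks to Lemma~\ref{lemma:unifell}.

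\emph{Step~1 (removing the free boundary from a fixed subball).} The key preliminary fact is that~\eqref{second-alternative-dichotomy} together with~$u\ge0$ forces~$\fint_{B_1}u$ to be comparable to~$|q|$. Indeed, by Poincar\'e, $u$ is~$L^p(B_1)$-close, with error~$\le C_n\e a$, to the affine map~$\ell(x):=\langle q,x\rangle+\fint_{B_1}u$, so~$u\ge0$ gives~$\fint_{B_1}(\ell)_-\le\fint_{B_1}|u-\ell|\le C_n\e a$; writing~$\fint_{B_1}(\ell)_-=|q|\,h\big(\fint_{B_1}u/|q|\big)$ with~$h$ strictly decreasing and~$h(1)=0$, and using~$|q|>a/8$ from~\eqref{estimate-norm-q}, one gets~$\fint_{B_1}u\ge|q|/2$ as soon as~$\e\le\e_0(n)$. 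Invoking the continuity of almost minimizers (part of the preliminary regularity theory) and interpolating with the~$L^p$-closeness, we upgrade this to~$\|u-\ell\|_{L^\infty(B_{1/2})}\le C(n,p,a_1)(\e a)^\theta$ for some~$\theta=\theta(n,p)\in(0,1)$; since~$\ell\ge|q|/4$ on~$\overline{B_{1/4}}$ and~$|q|>a_0/8$, this yields~$u\ge|q|/8>0$ on~$\overline{B_{1/4}}$, hence~$\{u=0\}\cap\overline{B_{1/4}}=\varnothing$, provided~$\e\le\e_0(n,p,a_0,a_1)$. We also record that~$\big(\fint_{B_{1/4}}|\nabla u-q|^p\big)^{1/p}\le C_n\,\e a$.

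\emph{Step~2 ($p$-harmonic replacement in~$B_{1/4}$).} Let~$v$ be the~$p$-harmonic replacement of~$u$ in~$B_{1/4}$, extended by~$u$ outside. As~$v=u>0$ on~$\partial B_{1/4}$, the maximum principle gives~$v>0$ in~$B_{1/4}$, so~$\chi_{\{v>0\}}=\chi_{\{u>0\}}=1$ there and the characteristic-function terms cancel. Testing~\eqref{2.8BIS} with this competitor, discarding the contribution in~$B_1\setminus B_{1/4}$ and using~$a\le a_1$ (recall~\eqref{316BIS}), we get
\[\int_{B_{1/4}}\bigl(|\nabla u|^p-|\nabla v|^p\bigr)\,dx\le C\sigma.\]
By Lemma~\ref{lemma-p-harm-replac} (bounding~$\int_{B_{1/4}}(|\nabla u|+|\nabla v|)^p\le Ca^p$ when~$1<p<2$) this gives~$\int_{B_{1/4}}|\nabla u-\nabla v|^p\le C\sigma$ for~$p\ge2$ and~$\le C\sigma^{p/2}$ for~$1<p<2$. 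Dividing by~$(\e a)^p$ and using~$\sigma\le c_0\e^P$ with~$P=\max\{p,2\}$ — so~$\sigma\le c_0\e^p$ if~$p\ge2$ and~$\sigma^{p/2}\le c_0^{p/2}\e^p$ if~$1<p<2$ — we obtain
\[\fint_{B_{1/4}}\frac{|\nabla u-\nabla v|^p}{(\e a)^p}\,dx\le\omega(c_0),\qquad\omega(c_0):=C(n,p,a_0)\,c_0^{\min\{1,\,p/2\}}\to0\ \text{ as }c_0\to0.\]
This is exactly where the exponent~$P=\max\{p,2\}$ enters.

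\emph{Step~3 (decay of~$\nabla v$ via linearization).} From Steps~1--2, $\fint_{B_{1/8}}|\nabla v-q|^p\le C_n(\e a)^p$, and since~$v$ is~$p$-harmonic, hence~$C^{1,\alpha_0}_{\mathrm{loc}}$, for~$\e$ small (depending on~$n,p,a_0,a_1$) we have~$|\nabla v|\ge|q|/2\ge a_0/16$ on~$B_{1/8}$. Differentiating~$\operatorname{div}F(\nabla v)=0$, each~$w_i:=\partial_i v-q_i$ solves~$\operatorname{div}(DF(\nabla v)\nabla w_i)=0$ in~$B_{1/8}$; by the computation in the proof of Lemma~\ref{lemma:unifell}, $DF(\nabla v(\cdot))$ is uniformly elliptic with ellipticity ratio depending only on~$p$ (its size being comparable to~$|q|^{p-2}$, hence bounded and bounded away from~$0$ by constants depending on~$a_0,a_1,p$). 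De Giorgi--Nash--Moser then yields, for every~$\rho\le1/32$,
\[\sup_{B_\rho}|\nabla v-\nabla v(0)|\le C\rho^{\alpha_0}\|\nabla v-q\|_{L^\infty(B_{1/16})}\le C\rho^{\alpha_0}\Big(\fint_{B_{1/8}}|\nabla v-q|^p\Big)^{1/p}\le C\rho^{\alpha_0}\e a,\]
with~$\alpha_0\in(0,1]$ and constants depending only on~$n,p$. Put~$\widetilde q:=\nabla v(0)$; then~$|\widetilde q-q|\le\|\nabla v-q\|_{L^\infty(B_{1/16})}\le\widetilde C\,\e a$, which is~\eqref{so3cer5b56859tj45ivnt45-2}.

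\emph{Step~4 (conclusion, choice of parameters, and the main obstacle).} Combining~$\fint_{B_\rho}|\nabla u-\widetilde q|^p\le2^{p-1}\big(\fint_{B_\rho}|\nabla u-\nabla v|^p+\fint_{B_\rho}|\nabla v-\widetilde q|^p\big)$ with~$\fint_{B_\rho}|\nabla u-\nabla v|^p\le C_n\rho^{-n}\fint_{B_{1/4}}|\nabla u-\nabla v|^p$ and Steps~2--3,
\[\fint_{B_\rho}|\nabla u-\widetilde q|^p\,dx\le C\big(\rho^{-n}\omega(c_0)+\rho^{\alpha_0 p}\big)(\e a)^p,\qquad C=C(n,p).\]
Given~$\alpha\in(0,\alpha_0)$ we first fix~$\rho=\rho(n,p,\alpha)$ with~$C\rho^{\alpha_0 p}\le\tfrac12\rho^{\alpha p}$, then fix~$c_0=c_0(n,p,a_0,a_1,\alpha)$ with~$C\rho^{-n}\omega(c_0)\le\tfrac12\rho^{\alpha p}$ (possible since~$\omega(c_0)\to0$), and finally shrink~$\e_0$ so that all smallness requirements in Steps~1--3 hold on~$(0,\e_0]$; this gives~\eqref{so3cer5b56859tj45ivnt45-1}. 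The hypothesis~\eqref{poiqwerthfbgihyouoiuo} enters through~$W^{1,p}(B_1)\hookrightarrow L^2(B_1)$, needed to give the second-order estimates for~$v$ (or, in a compactness variant of Step~3 as in~\cite{DS}, the limiting linearized solution) their meaning. The real obstacle is Step~1: were the free boundary allowed to touch~$\overline{B_{1/4}}$, the characteristic-function term in~\eqref{2.8BIS} would contribute an error of order~$|\{u=0\}\cap B_{1/4}|\sim\e^p$, which — $a$ being only bounded below by~$a_0$ — is comparable to, and not negligible against, the target~$(\e a)^p$; it is the sign condition~$u\ge0$ together with the flatness, which pins~$u$ above a positive constant on~$\overline{B_{1/4}}$, that makes the comparison purely variational and lets the error be absorbed into~$\sigma$.
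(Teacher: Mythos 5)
Your overall scheme (harmonic replacement, uniform ellipticity via Lemma~\ref{lemma:unifell}, decay of $\nabla v$, recombination) matches the paper, but Step~1 has a genuine gap. You claim $\{u=0\}\cap\overline{B_{1/4}}=\varnothing$ by ``invoking the continuity of almost minimizers (part of the preliminary regularity theory)'' and interpolating with the $L^p$-closeness to $\ell$. No such preliminary continuity is established in this paper, and establishing it is precisely the goal of the very regularity theory being built here, so this is circular unless you supply an independent proof of a modulus-of-continuity estimate. The paper avoids this entirely: it does \emph{not} show the zero set is empty (which is only achievable via Morrey when $p>n$), but instead proves the measure bound $|\{u=0\}\cap B_{9/10}|\le C\e^{p+\delta}$ for some $\delta>0$, splitting into $p<n$ (Poincar\'e--Sobolev, $\delta=p^2/(n-p)$), $p=n$ (Trudinger--Moser, $\delta=n/(n-1)$), and $p>n$ (Morrey, zero measure). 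The error term in the comparison inequality is then $|\{u=0\}\cap B_{9/10}|+\sigma J_p(v,B_1)$, and the improved exponent $p+\delta>p$ is what lets the characteristic-function contribution be absorbed into $(\e a)^p$. Your own closing remark correctly identifies that a bare $|\{u=0\}|\lesssim\e^p$ estimate is not good enough, but you resolve it by a claim you have not proved.

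A related error is your attribution of the hypothesis $p>\frac{2n}{n+2}$. You say it enters via $W^{1,p}\hookrightarrow L^2$ for second-order estimates on $v$; that is not where it is used. Once $|\nabla v|\ge|q|/2>0$ on $B_{1/8}$, the $p$-Laplace equation is uniformly elliptic with smooth coefficients and $v$ is smooth there for every $p>1$, so no restriction on $p$ is needed for Step~3. (Also note the paper deliberately does \emph{not} differentiate the equation: it writes $F(\nabla v)-F(q)=\int_0^1 DF(t\nabla v+(1-t)q)(\nabla v-q)\,dt$ and takes divergence, obtaining $\operatorname{div}\big(A(\nabla v-q)\big)=0$ directly, precisely to avoid assuming $W^{2,2}$-regularity a priori.) The hypothesis $p>\frac{2n}{n+2}$ is used, when $1<p<2$, to ensure $(p+\delta)\frac p2>p$, i.e. $p+\delta>2$; since $p+\delta=p^*=\frac{np}{n-p}$ for $p<n$, this is exactly $p^*>2$, which is the condition needed to reabsorb the measure-of-zero-set term raised to the power $p/2$ (as dictated by the $1<p<2$ branch of Lemma~\ref{lemma-p-harm-replac}). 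Tellingly, in your version of the argument the hypothesis $p>\frac{2n}{n+2}$ would never actually be invoked, which is a strong sign that the free boundary cannot simply be declared absent.
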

	
	The quantity~$\alpha_0$ in the statement of Lemma~\ref{lemma-second-alternative-dichotomy-improved}
	measures the oscillation of~$\nabla u,$ see~\cite{Ur}, \cite{MR709038}, \cite{T} or, specifically for our case, Theorem~2 in~\cite{Man}.
When~$p=2$, we have that~$\alpha_0=1$, according to the full regularity of harmonic functions,
but when~$p\not=2$ in general we only know that~$\alpha_0\in (0,1)$ (but when~$n=2$
a sharp regularity exponent has been determined in~\cite{IwMa}).

	\begin{proof}[Proof of Lemma~\ref{lemma-second-alternative-dichotomy-improved}]
		Let~$\bar{v}$ denote the~$p$-harmonic replacement of~$u$ in~$B_{9/10}$ and
		let~$v$ be defined as
		\begin{equation}\label{defin-of-special-competitor}
			v:=\begin{cases}
				\bar{v}&\mbox{in }B_{9/10},\\
				u&\mbox{in }B_1\setminus B_{9/10}.
			\end{cases}
		\end{equation}
		Then, since~$v=u$ on~$\partial B_1$ and~$v\in W^{1,p}(B_1)$, we
		can use~\eqref{2.8BIS} to see that 
		\[J_p(u,B_1)\le (1+\sigma)J_p(v,B_1).\]
		Consequently,
		\begin{equation*}\begin{split}
				J_p(u,B_{9/10})=\;&J_p(u,B_{1})- J_p(u,B_1\setminus B_{9/10})\\
				\le\;& J_p(v,B_{9/10})
				+J_p(v,B_1\setminus B_{9/10})+\sigma J_p(v,B_1)-J_p(u,B_1\setminus B_{9/10})\\
				=\;&J_p(v,B_{9/10})+J_p(u,B_1\setminus B_{9/10})+\sigma J_p(v,B_1)-J_p(u,B_1\setminus B_{9/10})\\
				=\;&J_p(v,B_{9/10})+\sigma J_p(v,B_1).
		\end{split}\end{equation*}
		Hence, recalling the definition of~$J_p$ in~\eqref{defjp},
		\begin{align*}
			&\int_{B_{9/10}}\left|\nabla u(x)\right|^p\,dx+\left|\left\{u>0\right\}\cap B_{9/10}\right|\le 
			\int_{B_{9/10}}\left|\nabla v(x)\right|^p\,dx+\left|B_{9/10}\right|+\sigma J_p(v,B_1),
		\end{align*}
		which yields that
		\begin{equation}\label{thisineq0987654322}
			\int_{B_{9/10}}\big(\left|\nabla u(x)\right|^p-\left|\nabla v(x)\right|^p\big)\,dx
			\le \left|\left\{u=0\right\}\cap B_{9/10}\right|+\sigma J_p(v,B_1).
		\end{equation}
		Moreover, recalling~\eqref{defin-of-special-competitor}, we point out that~$v$ is the~$p$-harmonic replacement of~$u$ in~$B_{9/10}$, and therefore
		\begin{equation}\begin{split}\label{dweorb849tbv75849}
				J_p(v,B_1) =\;& 
				\int_{B_1}\Big( |\nabla v(x)|^p+\chi_{\{v>0\}}(x)\Big)\,dx\\ \le\;&
				\int_{B_{9/10}}|\nabla v(x)|^p\,dx +\int_{B_1\setminus B_{9/10}}|\nabla v(x)|^p\,dx
				+|B_1|\\ \le\;&
				\int_{B_1}\left|\nabla u\right|^p\,dx+\left|B_1\right|\le a^p+\left|B_1\right|.
		\end{split}\end{equation}
		
		Furthermore, if~$p\ge2$, by~\eqref{second-ineq-lemma-p-harm-repl}
	and~\eqref{thisineq0987654322} we deduce that
		\begin{equation*}\label{second-ineq-condition-almost-minim}
			\int_{B_{9/10}}\left|\nabla u(x)-\nabla v(x)\right|^p\,dx\le
			C \left|\left\{u=0\right\}\cap B_{9/10}\right|+C\sigma J_p(v,B_1),
		\end{equation*}
		for some positive universal constant~$C$. 
		Consequently, exploiting~\eqref{dweorb849tbv75849}, we obtain that
		\begin{equation}\label{third-ineq-condition-almost-minim}
			\int_{B_{9/10}}\left|\nabla u(x)-\nabla v(x)\right|^p\,dx
			\le C \left|\left\{u=0\right\}\cap B_{9/10}\right|+C\sigma(a^p+1),
		\end{equation}
		up to renaming~$C$.
		
If instead~$p<2$, we use~\eqref{first-ineq-lemma-p-harm-repl}, \eqref{thisineq0987654322}
			and~\eqref{dweorb849tbv75849}
			to write that
			\begin{equation}\label{casonuovo}\begin{split}
					& \int_{B_{9/10}}\left|\nabla u(x)-\nabla v(x)\right|^p\,dx\\
					\le\;& C\left(\int_{B_{9/10}}\left(\left|\nabla u(x)\right|^p-\left|\nabla v(x)\right|^p\right)\,dx\right)^{\frac{p}{2}}
					\left(\int_{B_{9/10}}\big(\left|\nabla u(x)\right|+\left|\nabla v(x)\right|\big)^p\,dx\right)^{1-\frac{p}{2}}
					\\ \le\;& C\Big(\left|\left\{u=0\right\}\cap B_{9/10}\right|+\sigma J_p(v,B_1)\Big)^{\frac{p}{2}}
					\left(\int_{B_{9/10}}2^p\left|\nabla u(x)\right|^p\,dx\right)^{1-\frac{p}{2}}	\\ \le\;& C
					\Big(\left|\left\{u=0\right\}\cap B_{9/10}\right|+\sigma(a^p+1)\Big)^{\frac{p}{2}}
					a^{p\left(1-\frac{p}{2}\right)}	
					.\end{split}\end{equation}
		
		Now we claim that
		\begin{equation}\label{estimate-zero-set-almost-minim}
			\left|B_{9/10}\cap \left\{u=0\right\}\right|\le C_1\varepsilon^{p+\delta},
		\end{equation}
		for some~$C_1>0$ and~$\delta>0$. 
		To prove this, we consider the linear function
		\begin{equation}\label{defin-linear-funct}
			\ell(x):= b+q\cdot x,\quad {\mbox{ with }}\quad b:= \fint_{B_1}u(x)\,dx.
		\end{equation}
		We remark that
		\begin{align*}
			&\fint_{B_1}\big(u(x)-\ell(x)\big)\,dx=b-\left(b+\fint_{B_1}q\cdot x\,dx\right)=-\fint_{B_1}q\cdot x\,dx=0.
		\end{align*}
		As a consequence, denoting by
		\[(u-\ell)_{B_1}:=\fint_{B_1}\big(u(x)-\ell(x)\big)\,dx \]
		and using the Poincar\'e inequality, we see that 
		\[\left\|u-\ell-(u-\ell)_{B_1}\right\|_{L^p(B_1)}=
		\left\|u-\ell\right\|_{L^p(B_1)}\le C\left\|\nabla (u-\ell)\right\|_{L^p(B_1)},\]
		for some~$C>0$ universal.
		
		This and~\eqref{second-alternative-dichotomy} entail that
		\begin{equation}\label{woru4itb45v675yt54ty458}
			\fint_{B_1}\left|u(x)-\ell(x)\right|^p\,dx\le 
			C\int_{B_1}|\nabla (u-\ell)(x)|^p\,dx	\le C\varepsilon^pa^p.
		\end{equation}
		We also observe that, since~$u\ge 0,$ it holds that~$\ell^-\le \left|u-\ell\right|$.
		Using this information into~\eqref{woru4itb45v675yt54ty458}, we obtain that 
		\begin{equation}\label{ineq-average-negative-part-linear-funct-to-p}
			\fint_{B_1}(\ell^-(x))^p\,dx\le C\varepsilon^pa^p.
		\end{equation}
		
		Now we claim that, if~$\varepsilon$ is sufficiently small,
		\begin{equation}\label{lower-bound-linear-funct-1}
			\ell\ge c_1a\quad\mbox{in }B_{9/10},
		\end{equation}	
		for some~$c_1>0$. To check this, we argue by contradiction assuming that
		$$ \min_{x\in \overline{B_{9/10}}}\ell(x)<ca$$
		for all~$c>0$. 
		We notice that, for every~$x\in B_{9/10}$,
		$$ -\frac{9|q|}{10}\le \ell(x)-b\le \frac{9|q|}{10}.$$
		and therefore
		$$  ca>\min_{x\in \overline{B_{1/2}}}\ell(x)\ge b-\frac{9|q|}{10}.$$
		This leads to
		\begin{equation}\label{sowerb4ity58deteryhbervt}
			b\le  ca +\frac{9|q|}{10}.\end{equation}
		Now we define
		$${\mathcal{B}}:=\left\{x=-\frac{tq}{|q|}+\eta, \;{\mbox{ for some }} t\in\left[\frac{38}{40},\frac{39}{40}\right] {\mbox{ and }}
		\eta\in B_{1/40} \right\}.$$
		Notice that if~$x\in{\mathcal{B}}$ then
		$$ |x|\le t+|\eta|<\frac{39}{40}+\frac1{40}=1,$$
		and this shows that
		\begin{equation}\label{dketvuibtbuyiuy09876543}
			{\mathcal{B}}\subseteq B_1.\end{equation}
		
		Furthermore, from~\eqref{estimate-norm-q} and~\eqref{sowerb4ity58deteryhbervt} we see that, if~$x\in{\mathcal{B}}$,
		\begin{eqnarray*}
			&& \ell(x)=b - t|q|+\eta\cdot q\le ca +\frac{9|q|}{10}- \frac{38|q|}{40}+\frac{|q|}{40}=
			ca -\left(\frac{38}{40}-\frac{9}{10}-\frac1{40}\right)|q| \\&&\qquad\qquad = ca- \frac{|q|}{40}< ca
			-\frac{a}{320}.
		\end{eqnarray*}
		Hence, taking~$c\in\left(0, \frac{1}{640}\right)$, we infer that
		$$ \ell(x)\le -\frac{a}{640}.$$
		Accordingly, using this and~\eqref{dketvuibtbuyiuy09876543} into~\eqref{ineq-average-negative-part-linear-funct-to-p}, we obtain that
		\begin{eqnarray*}
			&& C\,|B_1|\,\varepsilon^pa^p\ge
			\int_{B_1}(\ell^-(x))^p\,dx\ge \int_{{\mathcal{B}}}(\ell^-(x))^p\,dx\ge
			\int_{{\mathcal{B}}}\left( \frac{a}{80\cdot 2^{(3p+1)/p}}\right)^p\,dx\ge \overline{c} a^p,
		\end{eqnarray*}
		for some positive universal constant~$\overline{c}$.
		This establishes the desired contradiction if~$\e$ is sufficiently small, and thus the proof of~\eqref{lower-bound-linear-funct-1} is complete.
		
Now, we distinguish the three following cases: $p\in (1,n),$ $p=n$ and~$p>n$.
		
Firstly, if~$p<n,$ recalling the Poincar\'e-Sobolev inequality (see e.g. Theorem~2 on page~265 of~\cite{MR1625845}, or Theorem~7.30 in~\cite{GilT}) and applying the same argument of formula~(7.40) in~\cite{GilT}, we obtain that
		\begin{equation}\label{Sobolev-Poincare-ineq}
			\left(\int_{B_1}\left|u(x)-\ell(x)\right|^{p^*}\,dx\right)^{1/p^*}\le C
			\left(\int_{B_1}\left|\nabla \big(u(x)-\ell(x)\big)\right|^{p}\,dx\right)^{1/p}\end{equation}
		for some~$C>0$ universal, where
		\[p^*:= \frac{np}{n-p}.\]		
		Then, by virtue of~\eqref{second-alternative-dichotomy}, \eqref{defin-linear-funct} and~\eqref{Sobolev-Poincare-ineq}, we get that 
		\begin{eqnarray*}
			&&\left(\displaystyle\int_{B_1}\left|u(x)-\ell(x)\right|^{p^*}\,dx\right)^{1/p^*}
			\le C	 \left(\int_{B_1}\left|\nabla \big(u(x)-\ell(x)\big)\right|^{p}\,dx\right)^{1/p}\\
			&&\qquad \qquad = C \left(\int_{B_1}\left|\nabla u(x)-q\right|^{p}\,dx\right)^{1/p}
			\le C\varepsilon a.
		\end{eqnarray*}
		This and~\eqref{lower-bound-linear-funct-1} entail that
		\begin{align*}
			C\varepsilon a\ge \left( \int_{B_{9/10}\cap  \left\{u=0\right\}}\left|\ell(x)
			\right|^{p^*}\,dx\right)^{1/p^*}\ge c_1a\left|B_{9/10}\cap  \left\{u=0\right\}\right|^{1/p^*},
		\end{align*}
		and thus, up to renaming constants,		
		\begin{equation}\label{Sobolev-Poincare-ineq-p-<-n-1}
			\left|B_{9/10}\cap  \left\{u=0\right\}\right|\le C\varepsilon^{p^*}.
		\end{equation}
		Now we notice that
		\[p^*=\frac{np+p^2-p^2}{n-p}=p+\frac{p^2}{n-p}.\]
		Therefore, setting 
		\begin{equation}\label{casepminoren}\delta:=\frac{p^2}{n-p}>0,
		\end{equation}
		we obtain~\eqref{estimate-zero-set-almost-minim} from~\eqref{Sobolev-Poincare-ineq-p-<-n-1}
		in the case~$p<n$.		
		
		If instead~$p>n,$ we recall Theorem~7.17 as well as~(7.12) and~(7.16) in~\cite{GilT}, to see that
		\begin{equation*}
			\sup\limits_{B_1}\left|u-\ell\right|\le 
			C\left( \int_{B_1}\left|\nabla \big(u(x)-\ell(x)\big)\right|^{p}\,dx\right)^{1/p}	\end{equation*}
		for some~$C>0$ universal. 
		{F}rom this, and making again use of~\eqref{second-alternative-dichotomy} and~\eqref{defin-linear-funct},
		we deduce that
		\begin{align*}
			\sup_{B_1}\left|u-\ell\right|\le C\left( \int_{B_1}\left|\nabla (u(x)-\ell(x))\right|^{p}\,dx\right)^{1/p} \le C\e
			a.
		\end{align*}
		As a consequence of this and~\eqref{lower-bound-linear-funct-1}, for all~$x\in B_{9/10}$,
		\begin{eqnarray*}
			C\e a\ge \sup_{B_1}\left|u-\ell\right|\ge \sup_{B_{9/10}}(\ell-u)\ge \ell(x)-u(x)\ge
			c_1a-u(x),
		\end{eqnarray*}
		and thus~$u(x)\ge c_1a-C\e a>0$ for all~$x\in B_{9/10}$, as long as~$\e$ is sufficiently small.
		Accordingly, it follows that
		\begin{equation}\label{si3875bvc9876bv546980987-0987-70-986}
		|B_{9/10}\cap  \left\{u=0\right\}|=0,\end{equation}
		and this gives~\eqref{estimate-zero-set-almost-minim} in the case~$p>n$ as well.
		
It remains to analyze the case~$p=n$. For this purpose,
		we point out that~$u-\ell\in W^{1,n}(B_1)$, and so we can apply Theorem~7.35 and Theorem~7.16 in~\cite{GilT} to obtain that
		\begin{equation*}
			\int_{B_1}\exp\left(\frac{\left|u(x)-\ell(x)\right|}{C_2\left\|\nabla (u-\ell)
				\right\|_{L^n(B_1)}}\right)^{\frac{n}{n-1}}\,dx
			\le C_3\left|B_1\right|,
		\end{equation*}
		where~$C_2$ and~$C_3$ are positive universal constants. 
		
		Also, for all~$x\in B_1$, we have that~$e^x\ge C_4x^{n}$ for some~$C_4>0$, and thus,
		recalling~\eqref{second-alternative-dichotomy} and~\eqref{defin-linear-funct},
		\begin{equation}\label{ineq-case-p-=-n-2}
			\int_{B_1}\left|u(x)-\ell(x)\right|^{\frac{n^2}{n-1}}\,dx\le C
			\left\|\nabla u-q\right\|_{L^n(B_1)}^{\frac{n^2}{n-1}}\le C\varepsilon^{\frac{n^2}{n-1}}
			a^{\frac{n^2}{n-1}},	\end{equation}
		for some~$C>0,$ from which we deduce that, using~\eqref{lower-bound-linear-funct-1} and~\eqref{ineq-case-p-=-n-2},
		\begin{eqnarray*}
			&& C\varepsilon^{\frac{n^2}{n-1}}
			a^{\frac{n^2}{n-1}}\ge
			\int_{B_1}\left|u(x)-\ell(x)\right|^{\frac{n^2}{n-1}}\,dx\ge \int_{B_{9/10}\cap\{u=0\}}
			\left|\ell(x)\right|^{\frac{n^2}{n-1}}\,dx\\&&\qquad\qquad\ge c_1^{\frac{n^2}{n-1}}a^{\frac{n^2}{n-1}}|B_{9/10}\cap\{u=0\}|
			.\end{eqnarray*}
		
		We point out that 
		\[\frac{n^2}{n-1}=\frac{n^2-n+n}{n-1}=n+\frac{n}{n-1},\]
		and therefore, choosing
		$$\delta:=\frac{n}{n-1}>0,$$
		we establish~\eqref{estimate-zero-set-almost-minim} when~$p=n$ as well.
		
		Gathering together~\eqref{third-ineq-condition-almost-minim} and~\eqref{estimate-zero-set-almost-minim}
		we thus conclude that, if~$p\ge2$,
		\begin{equation}\label{forth-ineq-condition-almost-minim}
			\int_{B_{9/10}}\left|\nabla u(x)-\nabla v(x)\right|^p\,dx\le C_1\varepsilon^{p+\delta}
			+C\sigma(a^p+1).
		\end{equation}
Instead, when~$1<p<2$, we make use of~\eqref{casonuovo} and~\eqref{estimate-zero-set-almost-minim}
			and we obtain that	
			\begin{equation}\label{forth-ineq-condition-almost-minimBIS}
				\int_{B_{9/10}}\left|\nabla u(x)-\nabla v(x)\right|^p\,dx\le
				C \Big(C_1\e^{p+\delta}+\sigma(a^p+1)\Big)^{\frac{p}{2}}
				a^{p\left(1-\frac{p}{2}\right)}.
			\end{equation}	
			
		Our aim is now to
		show that, even if~$v-q\cdot x$ is not the~$p$-harmonic replacement of~$u-q\cdot x,$ as in the classical case (see Lemma~2.3 in~\cite{DS}), it satisfies a ``nice'' equation in~$B_{9/10}$.
		
		To this end, we observe that if~$p\ge2$,
		using~\eqref{second-alternative-dichotomy} and~\eqref{forth-ineq-condition-almost-minim},
		\begin{equation}\label{soiw39vh43v5834v5660987654321uytr}\begin{split}
				\int_{B_{9/10}}\left|\nabla v(x)-q\right|^p\,dx \le\;& 2^{p-1}\left(
				\int_{B_{9/10}}\left|\nabla u(x)-q\right|^p\,dx
				+\int_{B_{9/10}}\left|\nabla v(x)-\nabla u(x)\right|^p\,dx\right)\\
				\le\; &  C_2\varepsilon^pa^p+C_1\varepsilon^{p+\delta}+C\sigma(a^p+1),
		\end{split}\end{equation}
		up to renaming constants.
		
If instead~$1<p<2$, from~\eqref{second-alternative-dichotomy}
			and~\eqref{forth-ineq-condition-almost-minimBIS} we have that
			\begin{equation}\label{soiw39vh43v5834v5660987654321uytrBIS}\begin{split}
					\int_{B_{9/10}}\left|\nabla v(x)-q\right|^p\,dx \le\;& 2^{p-1}\left(
					\int_{B_{9/10}}\left|\nabla u(x)-q\right|^p\,dx
					+\int_{B_{9/10}}\left|\nabla v(x)-\nabla u(x)\right|^p\,dx\right)\\
					\le\; &  C_2\varepsilon^pa^p+C \Big(C_1\e^{p+\delta}+\sigma(a^p+1)\Big)^{\frac{p}{2}}
					a^{p\left(1-\frac{p}{2}\right)}.
			\end{split}\end{equation}
		
		Suppose now that~$p\ge2$ and~$\sigma\le c_0\varepsilon^p$, with~$c_0$ to be made precise later.
		Recalling~\eqref{316BIS} and~\eqref{soiw39vh43v5834v5660987654321uytr}, we infer that
		\begin{equation}\label{first-ineq-p-harm-repl-minus-lin-funct}
			\int_{B_{9/10}}\left|\nabla v(x)-q\right|^p\,dx\le C_2\varepsilon^pa^p+C_1\varepsilon^{p+\delta}+Cc_0\varepsilon^p(a^p+1)
			\le C\varepsilon^pa^{p},
		\end{equation}
		up to relabeling~$C>0$.
		
		Similarly, if~$1<p<2$, we take~$\sigma\le c_0\varepsilon^2$, with~$c_0$ to be made precise later. In this case, we deduce from~\eqref{soiw39vh43v5834v5660987654321uytrBIS} that
			\begin{equation}\label{first-ineq-p-harm-repl-minus-lin-functBIS-2000}\begin{split}&
				\int_{B_{9/10}}\left|\nabla v(x)-q\right|^p\,dx\le C_2\varepsilon^pa^p+C \Big(C_1\e^{p+\delta}+c_0 \e^2(a^p+1)\Big)^{\frac{p}{2}}
				a^{p\left(1-\frac{p}{2}\right)}
				\\&\qquad\qquad\le C_1\varepsilon^{(p+\delta)\frac{p}{2}} a^{p }+C_2\varepsilon^{p} a^{p },
			\end{split}\end{equation}
			up to renaming the constants.
			
			We stress that we have not used the assumption on~$p$ given in~\eqref{poiqwerthfbgihyouoiuo} so far. We will use it
		next to reabsorb the term~$\varepsilon^{(p+\delta)\frac{p}{2}} a^{p }$ appearing
		in~\eqref{first-ineq-p-harm-repl-minus-lin-functBIS-2000} into the term~$\varepsilon^{p} a^{p }$.
		More precisely, we point out that
			\begin{equation}\label{slwpv56b987900000}{\mbox{if~$p\in \left(\max\left\{\frac{2n}{n+2},1\right\},\, 2\right)$,\quad then }} (p+\delta)\frac{p}{2}>p.\end{equation}
			Indeed, when~$n=1$ we are in the situation in which~$n=1<p$, and therefore we can take~$\delta=1$
			in~\eqref{estimate-zero-set-almost-minim} (recall also~\eqref{si3875bvc9876bv546980987-0987-70-986}),
			thus obtaining that~$\frac{p+1}{2}>1$, which gives~\eqref{slwpv56b987900000} in this case.
			
			Also, if~$n\ge2$, then we are in the case~$p< n$, and thus, recalling
			the value of~$\delta$ given in~\eqref{casepminoren}, we obtain that
			$$ p+\delta=p+\frac{p^2}{n-p}=\frac{np}{n-p}>2,$$
			which gives~\eqref{slwpv56b987900000}.
			
			As a consequence of~\eqref{first-ineq-p-harm-repl-minus-lin-functBIS-2000}
			and~\eqref{slwpv56b987900000}, we obtain that, if~$p\in \left(\max\left\{\frac{2n}{n+2},1\right\},\, 2\right)$,
				\begin{equation*}
				\int_{B_{9/10}}\left|\nabla v(x)-q\right|^p\,dx\le C\varepsilon^{p} a^{p },\end{equation*}
			for some~$C>0$.
			
			Putting together this and~\eqref{first-ineq-p-harm-repl-minus-lin-funct}, we conclude that,
			if~$p>\max\left\{\frac{2n}{n+2},1\right\}$,
			\begin{equation}\label{first-ineq-p-harm-repl-minus-lin-functBIS-2}
				\int_{B_{9/10}}\left|\nabla v(x)-q\right|^p\,dx\le C\varepsilon^{p} a^{p }.
		\end{equation}

			Furthermore, we claim that, if~$\e$ is sufficiently small,
			for all~$x\in B_{1/2}$,
			\begin{equation}\label{second-ineq-p-harm-repl-minus-lin-funct}
				\left|\nabla v(x)-q\right|\le C (\varepsilon a)^{\nu},
			\end{equation}
			for some~$C>0$ and~$\nu\in(0,1)$. Indeed, suppose by contradiction that
			\begin{equation}\label{soiwer8b95vt 9y589ytoierp}
				\max_{x\in\overline{B_{1/2}}}|\nabla v(x)-q|> C (\varepsilon a)^{\nu}\end{equation}
			for all~$C>0$ and~$\nu\in(0,1)$. We recall that~$v$ is~$p$-harmonic in~$B_{9/10}$ and
			thus the maximum in~\eqref{soiwer8b95vt 9y589ytoierp} is achieved, see e.g.~\cite{MR709038} or~\cite{T}.
			Moreover, let~$\overline{x}\in\overline{B_{1/2}}$
			be such that
			\begin{equation}\label{soiwer8b95vt 9y589ytoierpBIS}
				|\nabla v(\overline{x})-q|=\max_{x\in\overline{B_{1/2}}}|\nabla v(x)-q|
				> C (\varepsilon a)^{\nu}.\end{equation}
			Then, for all~$x\in B_{1/8}(\overline{x})$,
			$$ \big| \nabla v(x) -\nabla v(\overline{x})\big|\le \overline{C}|x-\overline{x}|^{\alpha},$$
			for some~$\overline{C}>0$ and~$\alpha\in(0,1]$, with~$\alpha$ depending only on~$n$ and~$p$,
			and~$\overline{C}$ bounded by a constant depending only on~$n$ and~$p$ times~$\|\nabla v\|_{L^\infty(B_{1/4}(\overline{x}))}$, see Theorem~2 in~\cite{Man}.
			Actually, since, by formula~(3.44) in~\cite{MZ},
$$\|\nabla v\|_{L^\infty(B_{1/4}(\overline{x}))}\le \|\nabla v\|_{L^\infty(B_{3/4})}\le 
\hat{C}\|\nabla v\|_{L^p(B_{9/10})}\le\hat{C}
\|\nabla u\|_{L^p(B_{9/10})}\le \widetilde{C} a_1,$$
for some~$\hat{C}$ and~$\widetilde{C}>0$ depending only on~$n$ and~$p$, we infer that~$\overline{C}$
only depends on~$n$, $p$ and~$a_1$.
			
			As a consequence of this and~\eqref{soiwer8b95vt 9y589ytoierpBIS},
			for all~$x\in B_{\left(\frac {C  (\e a)^{\nu} }{2 \overline{C}} \right)^{1/\alpha}}(\overline{x})$,
			\begin{eqnarray*}&&
				|\nabla v(x)-q|\ge |\nabla v(\overline{x})-q|-\big| (\nabla v(x)-q) -(\nabla v(\overline{x})-q)\big|
				\\&&\qquad\qquad \ge C (\varepsilon a)^{\nu}-\overline{C}|x-\overline{x}|^{\alpha}\ge
				C (\varepsilon a)^{\nu}- \frac{C (\varepsilon a)^{\nu}}2=\frac{C (\varepsilon a)^{\nu}}2.
			\end{eqnarray*}
			Notice also that, if~$\e$ is sufficiently small, then~$B_{\left(\frac {C  (\e a)^{\nu} }{2 \overline{C}} \right)^{1/\alpha}}(\overline{x})\subseteq B_{9/10}$.
			Accordingly,
			\begin{eqnarray*}
				&&\int_{B_{9/10}}\left|\nabla v(x)-q\right|^p\,dx \ge 
				\int_{ B_{\left(\frac {C  (\e a)^{\nu} }{2 \overline{C}} \right)^{1/\alpha}}(\overline{x}) }
				\left|\nabla v(x)-q\right|^p\,dx\ge
				\left(\frac{C (\varepsilon a)^{\nu}}2\right)^p \left|B_{\left(\frac {C  (\e a)^{\nu} }{2 \overline{C}} \right)^{1/\alpha}}(\overline{x})\right|\\&&\qquad\qquad \left(\frac{C (\varepsilon a)^{\nu}}2\right)^p \left(\frac {C  (\e a)^{\nu} }{2 \overline{C}} \right)^{n/\alpha}|B_1|=\frac{ C^{p+\frac{n}{\alpha}} |B_1| }{ 2^{p+\frac{n}{\alpha}}\, \overline{C}^{\frac{n}{\alpha}} }\,
				(\e a)^{\nu\left( p+\frac{n}\alpha\right)}.
			\end{eqnarray*}
			Hence, exploiting~\eqref{first-ineq-p-harm-repl-minus-lin-functBIS-2},
we find that
			$$ C\e^p a^{p }\ge\frac{ C^{p+\frac{n}{\alpha}} |B_1| }{ 2^{p+\frac{n}{\alpha}}\, \overline{C}^{\frac{n}{\alpha}} }\,
			(\e a)^{\nu\left( p+\frac{n}\alpha\right)},$$
			which yields the desired contradiction
			as soon as~$\e$ is chosen sufficiently small and~$\nu\in\left(0,\frac{p}{p+\frac{n}\alpha}\right).$
						The proof of~\eqref{second-ineq-p-harm-repl-minus-lin-funct} is thus complete.
			
			Let us define now the function~$F:\mathbb{R}^n\longrightarrow \mathbb{R}^n$ as~$F(z):=\left|z\right|^{p-2}z$.
			We have that
			\begin{eqnarray*}
				&&\left|\nabla v\right|^{p-2}\nabla v-\left|q\right|^{p-2}q=
				F(\nabla v)-F(\nabla(q\cdot x))=F(\nabla v)-F(q)\\&&\qquad\qquad
				=\int_0^1\frac{d}{dt}F(t\nabla v+(1-t)q)\,dt
				=\int_0^1DF(t\nabla v+(1-t)q)(\nabla v-q)\,dt.
			\end{eqnarray*}
			Hence, taking the divergence of both sides, we obtain that
			\begin{align}\label{unif-ellip-equat-lemma-dich-improved}
				\diverg\big(A (\nabla v-q)\big)=0\quad \mbox{ in }  B_{9/10},
			\end{align}
			with 
			\[
			A(x):=\int_0^1DF\big(t\nabla v(x)+(1-t)q\big)\,dt.
			\]
			Notice that we are in the setting of Lemma~\ref{lemma:unifell} with~$\eta:=\nabla v-q$. Indeed, we point out
			that, in light of~\eqref{estimate-norm-q}
			and~\eqref{second-ineq-p-harm-repl-minus-lin-funct}, for all~$x\in B_{1/4}$,
			$$ |\nabla v(x)-q|\le C (\varepsilon a)^{\nu}\le\frac{a}{16}<\frac{|q|}2,
			$$
			as long as~$\e$ is sufficiently small.
			
			Accordingly, exploiting Lemma~\ref{lemma:unifell} we find that
			$$ \lambda\,|q|^{p-2}\le A\xi \cdot \xi \le\Lambda\, |q|^{p-2},$$
			for all~$\xi\in\partial B_1$,
			for some~$\Lambda\ge\lambda>0$, depending only on~$p$. Recalling~\eqref{estimate-norm-q},
			this gives that
			$$ \lambda\,a^{p-2}\le A\xi \cdot \xi \le\Lambda\, a^{p-2},$$
			up to renaming~$\lambda$ and~$\Lambda$, that now may depend on~$n$ and~$p$.
			
			We point out that we are in the setting of Section~3.2 in~\cite{MZ} (see in particular
			formula~(3.43) there, with~$\lambda$ and~$\Lambda$ replaced by~$\lambda\,a^{p-2}$
			and~$\Lambda\,a^{p-2}$ here, respectively). Hence, we are in the position
			of applying Theorem~3.19 in~\cite{MZ},
			and so we obtain that, for every~$x\in B_{1/2}$,
			\begin{equation}\label{malizimer22}
				\left|\nabla v(x)-q\right|^p\le \sup_{B_{1/8}(x)}\left|\nabla v-q\right|^p\le C\fint_{B_{1/4}(x)}
				\left|\nabla v(y)-q\right|^p\,dy\le C\e^p a^{p},
			\end{equation}
			for some~$C>0$, depending on~$n$ and~$p,$ where we have also
			used~\eqref{first-ineq-p-harm-repl-minus-lin-functBIS-2}.
			
			Consequently, for all~$x\in B_{1/2}$,
			\begin{equation}\label{swewertyuioit5uyi5oyu458wvy4vu56vu00}
				\left|\nabla v(x)-q\right|\le 
				C\varepsilon a,
			\end{equation}
			up to renaming~$C$, depending on~$n$ and~$p$.
			
			Also, denoting~$
			\bar{q}:=\nabla v(0)-q$, we have that 
			\begin{equation}\label{sdo3rv8b95t7438 593456i89087656786}
				|\bar{q}| =|\nabla v(0)-q|\le	
				C\varepsilon a.\end{equation}
			Therefore, from this and~\eqref{swewertyuioit5uyi5oyu458wvy4vu56vu00}, we deduce that, for all~$x\in B_{1/2}$,
			\begin{equation*}
				\left|\nabla v(x)-q-\bar{q}\right|	\le C\varepsilon a.
			\end{equation*}
			As a consequence, exploiting Theorem~2 in~\cite{Man}, we obtain that,
			for every~$\rho\in(0,1/2)$,
			\begin{equation}\label{ineq-average-nabla-p-harm-repl-minus-tilde-q}
				\fint_{B_{\rho}}\left|\nabla v(x)-q-\bar{q}\right|^p\,dx\le 
				\fint_{B_{\rho}}\left(C\left(\frac{\rho}{1/2}\right)^\mu
				\left\|\nabla v-q-\bar{q}\right\|_{L^\infty(B_{1/2})}
				\right)^p\,dx\le C_2\,\rho^{\mu p}\,\varepsilon^p a^{p},
			\end{equation}
			for some~$\mu\in(0,1]$ and~$C_2>0$ depending on~$n$ and~$p$.
			
			Then, if~$p\ge 2$, putting together~\eqref{forth-ineq-condition-almost-minim} and~\eqref{ineq-average-nabla-p-harm-repl-minus-tilde-q}, we arrive at
			\begin{equation}\label{ineq-nabla-almost-minim-minus-tilde-q}\begin{split}&
					\fint_{B_{\rho}}\left|\nabla u(x)-q-\bar{q}\right|^p\,dx\\
					\le\;& 2^{p-1}\left(
					\fint_{B_{\rho}}\left|\nabla u(x)-\nabla v(x)\right|^p\,dx+
					\fint_{B_{\rho}}\left|\nabla v(x)-q-\bar{q}\right|^p\,dx
					\right)\\
					\le\;& 2^{p-1}C_1\varepsilon^{p+\delta}
					\rho^{-n}+2^{p-1}{C}\sigma(a^p+1)\rho^{-n}+2^{p-1}C_2\rho^{\mu p}\varepsilon^pa^{p}.\end{split}
			\end{equation}
			Thus, setting~$\alpha_0:=\mu$ and, for every~$\alpha \in (0,\alpha_0)$,
			\begin{equation}\label{jdiet7ub8v9wq43275bvc687c693--ppe5vb}  
				\rho:= (2^{p+1}C_2)^{\frac{1}{(\alpha- \alpha_0)p}}, \qquad
				\e_0:= \left(\frac{\rho^{\alpha p+n}a_0^p}{2^{p+1}C_1}\right)^{\frac1{\delta}}
\quad{\mbox{and}}\quad
 c_0:=\frac{\rho^{\alpha p+n} a_0^p}{2^{p+1}\, {C}(a_1^p+1)},
\end{equation}
we have that,
for every~$\e\in(0,\e_0]$ and~$\sigma\in(0,c_0\e^p]$, 
			\begin{eqnarray*}
				&&		2^{p-1}C_2\rho^{{\mu} p} \le \frac{1}{4}\rho^{\alpha p},\\
				&&			2^{p-1}C_1\varepsilon^{p+\delta} \rho^{-n}\le\frac{1}{4}\rho^{\alpha p}\varepsilon^p a^p\\
				{\mbox{and }}&& 2^{p-1} {C}\sigma(a^p+1)\rho^{-n}\le\frac{1}{4}\rho^{\alpha p}\varepsilon^p a^p.
			\end{eqnarray*}	
			As a consequence of this and~\eqref{ineq-nabla-almost-minim-minus-tilde-q},
			\begin{align*}
				&\fint_{B_{\rho}}\left|\nabla u(x)-q-\bar{q}\right|^p\,dx\le
				\frac{1}{4}\rho^{\alpha p}\varepsilon^pa^{p }+\frac{1}{4}\rho^{\alpha p}\varepsilon^pa^p+\frac{1}{4}\rho^{\alpha p}\varepsilon^pa^{p }\le \rho^{\alpha p}\varepsilon^pa^{p },
			\end{align*}
			which gives the desired result in~\eqref{so3cer5b56859tj45ivnt45-1} by setting~$\widetilde{q}:=q+\bar{q}$.
			
				Moreover, from~\eqref{sdo3rv8b95t7438 593456i89087656786} we have that~$ |q-\widetilde{q}|=|\bar{q}|\le C\e a$,
			which establishes~\eqref{so3cer5b56859tj45ivnt45-2}.	This completes the proof
			of Lemma~\ref{lemma-second-alternative-dichotomy-improved} when~$p\ge2$.
				
		In the case~$\max\left\{\frac{2n}{n+2},1\right\}<p<2$, we use~\eqref{forth-ineq-condition-almost-minimBIS} and~\eqref{ineq-average-nabla-p-harm-repl-minus-tilde-q} and we see that
				\begin{eqnarray*}&&
					\fint_{B_\rho} |\nabla u(x)-q-\bar{q}|^p\,dx\\
					&\le&2^{p-1}\left(
					\fint_{B_{\rho}}\left|\nabla u(x)-\nabla v(x)\right|^p\,dx+
					\fint_{B_\rho} |\nabla v(x)-q-\bar{q}|^p\,dx
					\right)\\&\le& 2^{p-1} C \Big(C_1\e^{p+\delta}+\sigma(a^p+1)\Big)^{\frac{p}{2}}
					a^{p\left(1-\frac{p}{2}\right)}\rho^{-n}+2^{p-1}C_2\rho^{\mu p}\varepsilon^pa^{p}
					\\&\le& 2^{p-1}C_1\varepsilon^{\frac{(p+\delta)p}2}a^{p\left(1-\frac{p}{2}\right)}
					\rho^{-n}+2^{p-1}{C}\sigma^{\frac{p}2}(a^p+1)^{\frac{p}2}a^{p\left(1-\frac{p}{2}\right)}\rho^{-n}+2^{p-1}C_2\rho^{\mu p}\varepsilon^pa^{p} \\
					&\le& 2^{p-1}C_1\varepsilon^{p+\widetilde\delta}a^{p\left(1-\frac{p}{2}\right)}
					\rho^{-n}+2^{p-1}{C}\sigma^{\frac{p}2}(a^p+1)^{\frac{p}2}a^{p\left(1-\frac{p}{2}\right)}\rho^{-n}+2^{p-1}C_2\rho^{\mu p}\varepsilon^pa^{p},
				\end{eqnarray*}
				where~$\widetilde\delta:=(p+\delta)p/2-p>0$, thanks to~\eqref{slwpv56b987900000} (and up
				to renaming~$C$ and~$C_1$, depending on~$n$ and~$p$).
				
				We set~$\alpha_0:=\mu$ and, for all~$\alpha\in(0,\alpha_0)$, we take~$\rho$
				as in~\eqref{jdiet7ub8v9wq43275bvc687c693--ppe5vb} and   
				\begin{equation*}
				\e_0:= \left(\frac{\rho^{\alpha p+n}a_0^{\frac{p^2}2}}{2^{p+1}C_1}\right)^{\frac1{\widetilde\delta}}
				\quad{\mbox{and}}\quad
				c_0:=\frac{\rho^{2\alpha +\frac{n}{p}} a_0^{p}}{4^{\frac{p+1}p}\, {C}^{\frac2{p}}(a_1^p+1)},\end{equation*}		
				obtaining that, for all~$\e\in(0,\e_0]$ and~$\sigma\in(0,c_0\e^2]$,
				$$ 	\fint_{B_\rho} |\nabla u(x)-q-\bar{q}|^p\,dx\le \rho^{\alpha p}\e^pa^p.$$
Hence, setting~$\widetilde{q}:=q+\bar{q}$, the desired results in~\eqref{so3cer5b56859tj45ivnt45-1} 
and~\eqref{so3cer5b56859tj45ivnt45-2} follow from
this and~\eqref{sdo3rv8b95t7438 593456i89087656786}.

The proof of Lemma~\ref{lemma-second-alternative-dichotomy-improved} is thereby complete.
		\end{proof}
		
		Iterating Lemma~\ref{lemma-second-alternative-dichotomy-improved} we obtain the following
		estimates:
		
		\begin{cor}\label{corollary-lemma-second-alternative-dichotomy-improved}
			Let~$a_1>a_0>0$ and~$p>\max\left\{\frac{2n}{n+2},1\right\}$.
			Let~$u$ be an almost minimizer for~$J_p$ in~$B_1$ (with constant~$\kappa$ and exponent~$\beta$) and
			$$ a:=\left(\fint_{B_1}\left|\nabla u(x)\right|^p \,dx\right)^{1/p}.$$
			Suppose that \begin{equation}\label{A0a1}
			a\in[ a_0,a_1]\end{equation} and that~$u$ satisfies~\eqref{bound-q} and~\eqref{second-alternative-dichotomy}.	
			
			Then there exist~$\varepsilon_0$, $\kappa_0$  and~$\gamma\in(0,1)$, depending on~$n$, $p$, $\beta$, $a_0$
			and~$a_1$, such
			that, for every~$\varepsilon\in(0, \varepsilon_0]$ and~$\kappa\in(0, \kappa_0\varepsilon^P]$,
			with~$P:=\max\{p,2\}$, then
			\begin{equation}\label{first-conclusion-corollary-p-ge-2}
				\left\|u-\ell\right\|_{C^{1,\gamma}(B_{1/2})}\le C\varepsilon a.
			\end{equation}
			 The positive constant~$C$ depends only on~$n$ and~$p$,
			 and~$\ell$ is a linear function of slope~$q$.
			
			Moreover, 
			\begin{equation}\label{second-conclusion-corollary}
				\left\|\nabla u\right\|_{L^{\infty}(B_{1/2})}\le \widetilde{C}a,
			\end{equation}
			with~$\widetilde{C}>0$ depending only on~$n$ and~$p$.
		\end{cor}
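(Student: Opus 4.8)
The plan is to iterate Lemma~\ref{lemma-second-alternative-dichotomy-improved} dyadically, first at the origin and then at every point of~$B_{1/2}$, and to close with a Campanato-type argument. The basic mechanism is the scale and translation invariance of almost minimizers: writing~$u_{x_0,r}(y):=u(x_0+ry)/r$, a change of variables gives~$J_p(u_{x_0,r},B_\varrho(x))=r^{-n}J_p(u,B_{r\varrho}(x_0+rx))$, so that~$u_{x_0,r}$ is an almost minimizer on~$B_1$ with the same exponent~$\beta$ and constant~$\kappa r^\beta\le\kappa$ (and the almost minimizing inequality also restricts to sub-balls). Fixing~$\alpha_0$ as in Lemma~\ref{lemma-second-alternative-dichotomy-improved}, I would set~$\gamma:=\tfrac12\min\{\alpha_0,\beta/P\}$, depending only on~$n$, $p$, $\beta$, and run the entire iteration with~$\alpha:=\gamma$; the strict inequality~$\alpha P<\beta$ is precisely what makes the required smallness~$\kappa\le\kappa_0\varepsilon^P$ uniform over all the dyadic scales.

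\emph{Iteration at the origin.} I would apply Lemma~\ref{lemma-second-alternative-dichotomy-improved} with the interval~$[a_0/16,\,3C_0 a_1]$ in place of~$[a_0,a_1]$ (it contains~$[a_0,a_1]$, and~\eqref{2.8BIS}, \eqref{316BIS}, \eqref{second-alternative-dichotomy}, \eqref{estimate-norm-q} hold at the first step with~$q_0:=q$, since~\eqref{bound-q} implies~\eqref{estimate-norm-q}), and then inductively to the rescalings~$u_{0,\rho^k}$, to produce vectors~$q_k$ with
\[\left(\fint_{B_{\rho^k}}|\nabla u-q_k|^p\,dx\right)^{1/p}\le\rho^{k\alpha}\varepsilon a
\qquad\text{and}\qquad |q_{k+1}-q_k|\le\widetilde C\,\rho^{k\alpha}\varepsilon a .\]
The induction closes because these bounds up to step~$k$ give~$\big|\,|q_k|-a_k\,\big|\le\rho^{k\alpha}\varepsilon a$, with~$a_k:=(\fint_{B_{\rho^k}}|\nabla u|^p\,dx)^{1/p}$, and~$|q_k-q|\le\frac{\widetilde C}{1-\rho^\alpha}\varepsilon a$, so that for~$\varepsilon$ small~$a_k$ stays in~$[a_0/16,3C_0 a_1]$ and~$a_k/8<|q_k|\le 2C_0 a_k$ — i.e. the rescaled hypotheses of the lemma hold at every step — while~$\alpha P<\beta$ yields~$\kappa\rho^{k\beta}\le c_0(\rho^{k\alpha}\varepsilon a/a_k)^P$ for all~$k$ as soon as~$\kappa\le\kappa_0\varepsilon^P$ (the worst case being~$k=0$). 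Summing the increments gives~$q_k\to q_\infty$ with~$|q_\infty-q|\le C\varepsilon a$, and combining with the decay and interpolating between consecutive radii yields~$\fint_{B_r}|\nabla u-q_\infty|^p\,dx\le C\,r^{\alpha p}\varepsilon^p a^p$ for all~$r\in(0,1)$.

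\emph{Iteration at a generic point.} I would repeat the argument centered at any~$x_0\in B_{1/2}$, starting from the scale~$r_0:=1/4$ (so~$B_{1/4}(x_0)\subset B_1$). Averaging~\eqref{second-alternative-dichotomy} over~$B_{1/4}(x_0)$ shows that~$u_{x_0,1/4}$ still satisfies~\eqref{2.8BIS}, \eqref{316BIS}, \eqref{second-alternative-dichotomy}, \eqref{estimate-norm-q} with the same vector~$q$ and flatness~$\le 4^{n/p}\varepsilon a$; here one uses~$a/4<|q|\le C_0 a$ to see that the rescaled average remains comparable to~$|q|$, so that~\eqref{estimate-norm-q} survives for~$\varepsilon$ small. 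Iterating the lemma exactly as above (it already absorbs the smallness of~$|\{u=0\}|$, so no extra free-boundary work is needed) produces, for every~$x_0\in B_{1/2}$, a vector~$q_\infty(x_0)$ with~$|q_\infty(x_0)-q|\le C\varepsilon a$ and~$\fint_{B_r(x_0)}|\nabla u-q_\infty(x_0)|^p\,dx\le C\,r^{\alpha p}\varepsilon^p a^p$ for all~$r\le r_0$. By the~$L^p$ version of Campanato's characterization of H\"older spaces this forces~$\nabla u\in C^{0,\alpha}(B_{1/2})$ with~$[\nabla u]_{C^{0,\alpha}(B_{1/2})}\le C\varepsilon a$ and~$\nabla u(x_0)=q_\infty(x_0)$, hence~$\|\nabla u-q\|_{L^\infty(B_{1/2})}\le C\varepsilon a$.

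\emph{Conclusion.} With~$\ell(x):=b+q\cdot x$, $b:=\fint_{B_1}u\,dx$, the last two bounds give~$\|\nabla(u-\ell)\|_{L^\infty(B_{1/2})}+[\nabla(u-\ell)]_{C^{0,\gamma}(B_{1/2})}\le C\varepsilon a$ (with~$\gamma=\alpha$), while the Poincar\'e estimate~\eqref{woru4itb45v675yt54ty458}, together with the just-obtained Lipschitz control of~$u-\ell$ on~$B_{1/2}$, gives~$\|u-\ell\|_{L^\infty(B_{1/2})}\le C\varepsilon a$ (choose a point of~$B_{1/2}$ where~$|u-\ell|\le C\varepsilon a$ and propagate along the gradient bound). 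Summing these three contributions yields~\eqref{first-conclusion-corollary-p-ge-2}, and~$\|\nabla u\|_{L^\infty(B_{1/2})}\le\|\nabla u-q\|_{L^\infty(B_{1/2})}+|q|\le C\varepsilon a+C_0 a\le\widetilde C a$ yields~\eqref{second-conclusion-corollary}. The main obstacle is the bookkeeping in the two nested iterations — keeping~$a_k$ in a fixed interval and~$|q_k|$ nondegenerate, so that Lemma~\ref{lemma-second-alternative-dichotomy-improved} can legitimately be reapplied at each step — which is exactly what forces~$\alpha<\beta/P$ for the H\"older exponent and the threshold~$\kappa\le\kappa_0\varepsilon^P$; the remaining ingredients (rescaling, Campanato, summing geometric series) are routine.
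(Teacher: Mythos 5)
Your proposal is correct and follows essentially the same route as the paper's proof: iterate Lemma~\ref{lemma-second-alternative-dichotomy-improved} with exponent $\alpha=\gamma$ chosen so that $\alpha P\le\beta$ (so that the rescaled almost-minimality constant $\kappa\rho^{k\beta}$ stays below $c_0\varepsilon_k^P$ uniformly in $k$), track that $a_k$ and $|q_k|$ remain in a fixed interval so the lemma's hypotheses persist, repeat at every center in $B_{1/2}$, convert the resulting decay into a Campanato seminorm bound to conclude $\nabla u-q\in C^{0,\gamma}(B_{1/2})$ with seminorm $\lesssim\varepsilon a$, and propagate the $L^p$ closeness of $u$ to $\ell$ into an $L^\infty$ bound using the gradient control. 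The only cosmetic differences from the paper's argument are that you start the off-center iteration from scale $1/4$ rather than rescaling $u$ to be an almost minimizer on $B_2$, and you anchor $\ell$ at the mean $b=\fint_{B_1}u$ rather than at $u(0)$; neither affects the structure or validity of the argument.
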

		
		\begin{remark}
			We point out that a consequence of~\eqref{first-conclusion-corollary-p-ge-2}
			in Corollary~\ref{corollary-lemma-second-alternative-dichotomy-improved}
			is that, if~$\e$ is sufficiently small, 
			\begin{equation}\label{sowribt4ytkmjnhbgfvcdghjhbgfvdfghhytgrfdewsertyu787654}
				\nabla u\neq 0\quad {\mbox{in }}B_{1/2}.\end{equation}
			Indeed, by~\eqref{first-conclusion-corollary-p-ge-2}, we have that, for all~$x\in B_{1/2}$,
			\[\left|\nabla u(x)-q\right| \le C\varepsilon a,
			\]
			which gives that
			\[
			\left|\nabla u(x)\right| \ge |q|-\left|\nabla u(x)-q\right| \ge \left|q\right|-C\varepsilon a.
			\]
			As a consequence, from~\eqref{bound-q}, we get
			\[\left|\nabla u(x)\right| \ge \frac{a}{4}-C\varepsilon a>0,\]
			as soon as~$\varepsilon$ is sufficiently small, which yields~\eqref{sowribt4ytkmjnhbgfvcdghjhbgfvdfghhytgrfdewsertyu787654}.
			
			Furthermore,
			\begin{equation}\label{kjhgfdsqwertyuio09876543}
				u>0\quad {\mbox{in }} B_{1/2}.\end{equation}
			To check this, we recall that~$u\ge0$ and we suppose by contradiction that
			there exists a point~$x_0\in B_{1/2}$ such that~$u(x_0)=0$. As a consequence, since~$u\in C^{1,\gamma}(B_{1/2})$, we see that~$\nabla u(x_0)=0$, and this contradicts~\eqref{sowribt4ytkmjnhbgfvcdghjhbgfvdfghhytgrfdewsertyu787654}, thus proving~\eqref{kjhgfdsqwertyuio09876543}.
		\end{remark}
		
		In order to prove Corollary~\ref{corollary-lemma-second-alternative-dichotomy-improved}
		we recall the definition of Campanato spaces and a result which we will use in the proof of the corollary,
		see~\cite{G}.
		
		\begin{defin}[Campanato spaces]\label{defin-Campanato-spaces}
			Let~$\Omega$ be a bounded open set in~$\mathbb{R}^n$,
			$1\le p<+\infty$ and~$\lambda \ge 0$. We denote by~$\mathcal{L}^{p,\lambda}(\Omega)$ the space of functions~$u\in L^p(\Omega)$ such that
			\begin{equation}\label{defin-Campanato-seminorm}
				[u]^p_{\mathcal{L}^{p,\lambda}(\Omega)}:= \sup_{\stackrel{x_0\in \Omega}{\rho>0}}\rho^{-\lambda}\int_{\Omega_{x_0,\rho}}\left|u(x)-u_{x_0,\rho}\right|^p\,dx<+\infty,
			\end{equation}
			where
			\[\Omega_{x_0,\rho}:= \Omega\cap B_\rho(x_0)\qquad{\mbox{and}}\qquad
			u_{x_0,\rho}:= \fint_{\Omega_{x_0,\rho}} u(x)\,dx.\]
		\end{defin}
		
		We point out that
		the quantity~$[u]_{\mathcal{L}^{p,\lambda}(\Omega)}$ is a seminorm in~$\mathcal{L}^{p,\lambda}(\Omega)$ and \begin{equation}\label{equivalent-Campanato-seminorm}
			{\mbox{it is equivalent to the quantity }}\quad
			\left(\sup_{\stackrel{x_0\in \Omega}{\rho>0}}\rho^{-\lambda}\inf_{\xi\in \mathbb{R}^n}\int_{\Omega_{x_0,\rho}}\left|u(x)-\xi\right|^p\,dx\right)^{1/p}.\end{equation}
		We also define the norm in~$\mathcal{L}^{p,\lambda}(\Omega)$ as
		\begin{equation}\label{defin-norm-L-p-lambda}
			\left\|u\right\|_{\mathcal{L}^{p,\lambda}(\Omega)}
			:=\left\|u\right\|_{L^p(\Omega)}+[u]_{\mathcal{L}^{p,\lambda}(\Omega)}.
		\end{equation}	
		
		\begin{thm}\label{theo-isomorph-Campanato-spaces-Holder-spaces}
			Let~$x\in\R^n$ and~$r>0$. Let~$1\le p<+\infty$ and~$\lambda \ge 0$.
			Suppose that~$n<\lambda\le n+p$.
			
			Then, the space~$\mathcal{L}^{p,\lambda}(B_r(x))$ is isomorphic to~$C^{0,\alpha}(B_r(x)),$ with~$\alpha:=\frac{\lambda-n}{p}.$
		\end{thm}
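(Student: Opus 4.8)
\emph{Plan.} The statement is the classical Campanato embedding theorem, specialised to the case in which the underlying domain is a ball; its proof amounts to establishing two mutually inverse bounded linear inclusions $C^{0,\alpha}(B_r(x))\hookrightarrow\mathcal{L}^{p,\lambda}(B_r(x))$ and $\mathcal{L}^{p,\lambda}(B_r(x))\hookrightarrow C^{0,\alpha}(B_r(x))$, with $\alpha=\frac{\lambda-n}{p}$. Throughout I abbreviate $\Omega:=B_r(x)$ and I will repeatedly use the elementary \emph{measure density} estimate
\[ |\Omega_{x_0,\rho}|=|B_\rho(x_0)\cap B_r(x)|\ge c_n\,\rho^n\qquad\text{for all }x_0\in\Omega,\ \rho\in(0,2r], \]
with $c_n>0$ purely dimensional: indeed for a ball the worst situation is $x_0\in\partial\Omega$, in which case a fixed fraction of $B_\rho(x_0)$ still lies inside $\Omega$.

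\emph{The easy inclusion.} If $u\in C^{0,\alpha}(\Omega)$, then for every $x_0\in\Omega$ and $\rho>0$ one has $|u(y)-u_{x_0,\rho}|\le[u]_{C^{0,\alpha}(\Omega)}\,(2\rho)^\alpha$ for a.e. $y\in\Omega_{x_0,\rho}$ (by Jensen's inequality applied to the definition of $u_{x_0,\rho}$), hence
\[ \rho^{-\lambda}\int_{\Omega_{x_0,\rho}}|u(y)-u_{x_0,\rho}|^p\,dy\le \rho^{-\lambda}\,2^{\alpha p}[u]_{C^{0,\alpha}(\Omega)}^p\,\rho^{\alpha p}\,|B_\rho|=C\,[u]_{C^{0,\alpha}(\Omega)}^p, \]
where we used $\lambda=n+\alpha p$. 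Together with $\|u\|_{L^p(\Omega)}\le C\|u\|_{L^\infty(\Omega)}$ this gives $\|u\|_{\mathcal{L}^{p,\lambda}(\Omega)}\le C\|u\|_{C^{0,\alpha}(\Omega)}$, so the identity map is a bounded inclusion in this direction.

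\emph{The substantive inclusion: from Campanato decay to a H\"older representative.} Let $u\in\mathcal{L}^{p,\lambda}(\Omega)$, fix $x_0\in\Omega$ and $R\in(0,r]$, and set $\rho_j:=R\,2^{-j}$. Using Jensen, the inclusion $\Omega_{x_0,\rho_{j+1}}\subset\Omega_{x_0,\rho_j}$ and the measure density estimate, we get
\[ |u_{x_0,\rho_{j+1}}-u_{x_0,\rho_j}|^p\le\frac{1}{|\Omega_{x_0,\rho_{j+1}}|}\int_{\Omega_{x_0,\rho_j}}|u-u_{x_0,\rho_j}|^p\,dy\le\frac{C}{\rho_{j+1}^n}\,[u]_{\mathcal{L}^{p,\lambda}(\Omega)}^p\,\rho_j^{\lambda}=C\,[u]_{\mathcal{L}^{p,\lambda}(\Omega)}^p\,\rho_j^{\lambda-n}. \]
Since $\lambda-n=\alpha p>0$, this yields $|u_{x_0,\rho_{j+1}}-u_{x_0,\rho_j}|\le C[u]_{\mathcal{L}^{p,\lambda}(\Omega)}\,\rho_j^{\alpha}$, and the geometric series $\sum_j\rho_j^{\alpha}$ converges; hence $\{u_{x_0,\rho_j}\}_j$ is Cauchy, the limit $\tilde u(x_0):=\lim_{j\to\infty}u_{x_0,\rho_j}$ exists, and telescoping gives $|u_{x_0,R}-\tilde u(x_0)|\le C[u]_{\mathcal{L}^{p,\lambda}(\Omega)}\,R^{\alpha}$. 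A one-line interpolation between consecutive dyadic scales shows that in fact $u_{x_0,\rho}\to\tilde u(x_0)$ as $\rho\to0$, and by the Lebesgue differentiation theorem $\tilde u=u$ a.e.; so $\tilde u$ is the candidate representative. For the H\"older bound, take $x_0,y_0\in\Omega$ with $R:=|x_0-y_0|\le r$ (the case $R>r$ being trivial once $\tilde u$ is bounded). Since $\Omega_{x_0,R}\subset\Omega_{x_0,2R}\cap\Omega_{y_0,2R}=:W$ we have $|W|\ge c_nR^n$, whence
\[ |u_{x_0,2R}-u_{y_0,2R}|^p\le\frac{2^{p-1}}{|W|}\left(\int_{\Omega_{x_0,2R}}|u-u_{x_0,2R}|^p\,dy+\int_{\Omega_{y_0,2R}}|u-u_{y_0,2R}|^p\,dy\right)\le\frac{C}{R^n}\,[u]_{\mathcal{L}^{p,\lambda}(\Omega)}^p\,(2R)^{\lambda}, \]
so $|u_{x_0,2R}-u_{y_0,2R}|\le C[u]_{\mathcal{L}^{p,\lambda}(\Omega)}R^{\alpha}$; combining with the telescoping estimate at $x_0$ and at $y_0$ gives $|\tilde u(x_0)-\tilde u(y_0)|\le C[u]_{\mathcal{L}^{p,\lambda}(\Omega)}|x_0-y_0|^{\alpha}$, i.e. $[\tilde u]_{C^{0,\alpha}(\Omega)}\le C[u]_{\mathcal{L}^{p,\lambda}(\Omega)}$. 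Finally, choosing $R=r$ in the telescoping bound gives $|\tilde u(x_0)|\le|u_{x_0,r}|+C[u]_{\mathcal{L}^{p,\lambda}(\Omega)}r^{\alpha}\le C(\|u\|_{L^p(\Omega)}+[u]_{\mathcal{L}^{p,\lambda}(\Omega)})$, so $\|\tilde u\|_{C^{0,\alpha}(\Omega)}\le C\|u\|_{\mathcal{L}^{p,\lambda}(\Omega)}$. The two inclusions just built are inverse to one another, which proves the asserted isomorphism.

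\emph{Role of the hypotheses and main obstacle.} The assumption $\lambda>n$ is exactly what makes $\alpha=\frac{\lambda-n}{p}$ positive, so that the geometric series $\sum_j\rho_j^\alpha$ converges — this is the mechanism by which $L^p$ mean-oscillation decay is upgraded to pointwise H\"older continuity; the upper bound $\lambda\le n+p$ forces $\alpha\le1$, so that $C^{0,\alpha}(\Omega)$ is the genuine H\"older space (for $\lambda>n+p$ the same computation would force $\tilde u$ to be a.e. constant). The only genuinely geometric ingredient, and the step I would single out as the point requiring care, is the measure density bound $|\Omega_{x_0,\rho}|\ge c_n\rho^n$: for a general domain this is a nontrivial regularity assumption, but here $\Omega$ is a ball and the estimate is elementary. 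Everything else is H\"older's inequality, Jensen's inequality, the triangle inequality, and a dyadic telescoping argument. (This is the classical Campanato theorem; see e.g.~\cite{G}.)
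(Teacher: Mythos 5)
Your proof is correct and follows the standard Campanato iteration argument; note that the paper itself gives no proof of this statement, quoting it from \cite{G}, where essentially the same dyadic telescoping plus measure-density argument you wrote is carried out, so there is nothing to compare beyond the fact that you have reproduced the classical proof. The only cosmetic point is that in the H\"older step you invoke the telescoping bound at radius $2R$, which may exceed $r$ (you set it up only for $R\in(0,r]$); since your measure-density estimate and the Campanato seminorm cover all radii up to $2r$, the same telescoping (or one extra comparison between the scales $2R$ and $R$) gives the needed bound verbatim.
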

		
		We also point out the following scaling property
		of almost minimizers:
		
		\begin{lem}\label{remark-rescaling-almost-minimizer-1}
			Let~$u$ be an almost minimizer for~$J_p$ in~$B_1$ with constant~$\kappa$ and exponent~$\beta$. 
			For any~$r\in(0,1)$, let
			\begin{equation}\label{defin-rescaling}
				u_r(x):=\frac{u(rx)}{r}.
			\end{equation} 
			
			Then, $u_r$ is			
		an almost minimizer for~$J_p$ in~$B_{1/r}$ with constant~$\kappa r^\beta$ and exponent~$\beta$, namely
			\begin{equation}\label{THdefin-rescaling} J_p(u_r,B_\varrho(x_0))\leq(1+\kappa r^\beta \varrho^\beta)J_p(v,B_\varrho(x_0)),
		\end{equation}
		for every ball~$B_\varrho(x_0)$ such that~$\overline{B_\varrho(x_0)}\subset B_{1/r}$ and for every~$v\in W^{1,p}(B_\varrho(x_0))$ such that~$v=u_r$ on~$\partial B_\varrho(x_0)$ in the sense of the trace.
			\end{lem}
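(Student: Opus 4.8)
The plan is to reduce the claim to the almost minimality of~$u$ in~$B_1$ by a plain change of variables, keeping track of how the two terms in~$J_p$ rescale. First I would record the elementary identities~$\nabla u_r(x)=\nabla u(rx)$ and, since~$r>0$, $\chi_{\{u_r>0\}}(x)=\chi_{\{u>0\}}(rx)$; note also that~$u_r\ge0$ a.e.\ and~$u_r\in W^{1,p}(B_{1/r})$, so~$u_r$ is an admissible object for Definition~\ref{DE:QA}. Then, for any ball with~$\overline{B_\varrho(x_0)}\subset B_{1/r}$, the substitution~$y=rx$ gives
\[ J_p(u_r,B_\varrho(x_0))=\int_{B_\varrho(x_0)}\Big(|\nabla u(rx)|^p+\chi_{\{u>0\}}(rx)\Big)\,dx=r^{-n}\,J_p\big(u,B_{r\varrho}(rx_0)\big), \]
and the inclusion~$\overline{B_\varrho(x_0)}\subset B_{1/r}$ forces~$\overline{B_{r\varrho}(rx_0)}\subset B_1$, so~$u$ is indeed almost minimizing on the rescaled ball.

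Next, given a competitor~$v\in W^{1,p}(B_\varrho(x_0))$ with~$v=u_r$ on~$\partial B_\varrho(x_0)$, I would produce the corresponding competitor for~$u$ by undoing the dilation, setting~$w(y):=r\,v(y/r)$ for~$y\in B_{r\varrho}(rx_0)$. One checks that~$w\in W^{1,p}(B_{r\varrho}(rx_0))$ and that on~$\partial B_{r\varrho}(rx_0)$ one has~$w(y)=r\,v(y/r)=r\,u_r(y/r)=u(y)$ in the sense of traces, so~$w$ is admissible against~$u$ on~$B_{r\varrho}(rx_0)$. The same computation as above, run now with~$x=y/r$, yields~$J_p\big(w,B_{r\varrho}(rx_0)\big)=r^{n}\,J_p(v,B_\varrho(x_0))$.

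Finally I would invoke the almost minimality of~$u$ on~$B_{r\varrho}(rx_0)$ with this competitor, $J_p(u,B_{r\varrho}(rx_0))\le\big(1+\kappa(r\varrho)^\beta\big)J_p(w,B_{r\varrho}(rx_0))$, and chain the three displays:
\[ J_p(u_r,B_\varrho(x_0))=r^{-n}J_p\big(u,B_{r\varrho}(rx_0)\big)\le r^{-n}\big(1+\kappa r^\beta\varrho^\beta\big)J_p\big(w,B_{r\varrho}(rx_0)\big)=\big(1+\kappa r^\beta\varrho^\beta\big)J_p(v,B_\varrho(x_0)), \]
which is precisely~\eqref{THdefin-rescaling}. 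There is no genuine obstacle in this argument: the only points requiring care are that the dilation commutes with the indicator~$\chi_{\{\,\cdot\,>0\}}$ (valid because~$r>0$), that the trace condition is preserved under the map~$v\mapsto w$, and the bookkeeping of the Jacobian factors~$r^{\pm n}$ together with the identity~$\kappa(r\varrho)^\beta=(\kappa r^\beta)\varrho^\beta$, which is exactly what converts the constant~$\kappa$ into~$\kappa r^\beta$ while leaving the exponent~$\beta$ unchanged.
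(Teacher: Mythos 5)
Your proof is correct and essentially identical to the paper's: the same dilation change of variables, the same competitor $w(y)=r\,v(y/r)$ on the rescaled ball, and the same bookkeeping of the Jacobian factor $r^n$ and the identity $\kappa(r\varrho)^\beta=(\kappa r^\beta)\varrho^\beta$. No gaps.
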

		
		\begin{proof} 
			By definition of almost minimizers, we know that
			\begin{equation}\label{condition-almost-minimality}
				J_p(u,B_\vartheta(y_0))\leq(1+\kappa \vartheta^\beta)J_p(w,B_\vartheta(y_0))
			\end{equation}
			for every ball~$B_\vartheta(y_0)$ such that~$\overline{B_\vartheta(y_0)}\subset B_{1/r}$ and for every~$w\in W^{1,p}(B_\vartheta(y_0))$ such that~$w=u$ on~$\partial B_\vartheta(y_0)$ in the sense of the trace.
			
			Now, given~$x_0\in B_{1/r}$, we take~$\varrho$ and~$v$ as in the statement of Lemma~\ref{remark-rescaling-almost-minimizer-1} and we define
			\begin{equation}\label{defin-v_r}
				w(x):= rv\left(\frac{x}{r}\right).
			\end{equation}	
			Then, using the notation~$y_0:=r x_0$ and~$\vartheta:=r\varrho$,
			for all~$x\in\partial B_\vartheta(y_0)$, we have that~$\frac{x}r\in\partial B_\varrho(x_0)$ and therefore, in the sense of the trace,
			\begin{equation*}
				w(x)=rv\left(\frac{x}{r}\right)  =ru_r\left(\frac{x}{r}\right)=u(x).
			\end{equation*}
			Accordingly, we can use~$w$ as a competitor for~$u$ in~\eqref{condition-almost-minimality},
			thus obtaining that
\begin{equation}\label{00PPJnd2}
	\int_{B_{r\varrho}(y_0)}\Big(\left|\nabla u(y)\right|^p+\chi_{\left\{u>0\right\}}(y)\Big)\,dy
\leq(1+\kappa r^\beta\varrho^\beta)
\int_{B_{r\varrho}(y_0)}\Big(\left|\nabla w(y)\right|^p+\chi_{\left\{w>0\right\}}(y)\Big)\,dy.
\end{equation}			

Furthermore, using, consistently with~\eqref{defin-rescaling}, the notation~$w_r(x):=\frac{w(rx)}{r}$, with the change of variable~$x:=\frac{y}r$ we see that
\begin{equation}\label{00PPJnd}\begin{split}&
\int_{B_{r\varrho}(y_0)}\Big(\left|\nabla w(y)\right|^p+\chi_{\left\{w>0\right\}}(y)\Big)\,dy= {r^n}
\int_{B_{\varrho}(x_0)}\Big(\left|\nabla w(rx)\right|^p+\chi_{\left\{w>0\right\}}(rx)\Big)\,dx\\
&\qquad \qquad=r^n\int_{B_{\varrho}(x_0)}\Big(\left|\nabla w_r(x)\right|^p+\chi_{\left\{w_r>0\right\}}(x)\Big)\,dx
\end{split}\end{equation}
and a similar identity holds true with~$u$ and~$u_r$ replacing~$w$ and~$w_r$.

Also, recalling~\eqref{defin-v_r},
we observe that~$v=w_r$. Plugging this information and~\eqref{00PPJnd}
into~\eqref{00PPJnd2}, we obtain the desired result in~\eqref{THdefin-rescaling}.\end{proof}
		
		\begin{proof}[Proof of Corollary~\ref{corollary-lemma-second-alternative-dichotomy-improved}]
Up to scaling, we suppose that
\begin{equation}\label{KSM-kmb2}
{\mbox{$u$ is an almost minimizer for~$J_p$ in~$B_2$ (with constant~$\kappa$ and exponent~$\beta$).}}
\end{equation}

We prove that we can iterate Lemma~\ref{lemma-second-alternative-dichotomy-improved} indefinitely
with~$\alpha:=\min\left\{\frac{\alpha_0}2,\frac{\beta}{P}\right\}$,
where~$P:=\max\{p,2\}$ and~$\alpha_0$ is given in Lemma~\ref{lemma-second-alternative-dichotomy-improved}.

More precisely, we claim that, for all~$ k\ge 0$, there exists~$q_k\in\R^n$ such that
			\begin{equation}\label{jdietrube8t5746v9854-bis}\begin{split}&
|q_k|\in\left[ \frac{a}4-\frac{\widetilde C\e a\left( 1-\rho^{k\alpha}\right)}{1-\rho^{\alpha}}, \,C_0a+\frac{\widetilde C\e a\left( 1-\rho^{k\alpha}\right)}{1-\rho^{\alpha}}\right]
,\\&
				\left(\fint_{B_{\rho^k}}\left|\nabla u(x)-q_{k}\right|^p\,dx\right)^{1/p}\le\rho^{{k\alpha}}\varepsilon a\\{\mbox{and }}\quad&
				\left(\fint_{B_{\rho^k}}\left|\nabla u(x)\right|^p\,dx\right)^{1/p}\in
				\left[ \frac{|q|}2,2|q|\right],
\end{split}
			\end{equation} 
			where~$\rho\in(0,1)$, $C_0>0$ and~$\widetilde C>0$ are universal quantities.

To prove this, we argue by induction. When~$k=0$, we pick~$q_0:=q$. Then, in this case, the desired claims in~\eqref{jdietrube8t5746v9854-bis} follow from~\eqref{bound-q}, \eqref{3.15BIS} and~\eqref{second-alternative-dichotomy}.

Now we perform the inductive step by assuming that~\eqref{jdietrube8t5746v9854-bis} is satisfied
for~$k$ and we establish the claim for~$k+1$. To this end, we let~$r:=\rho^k$ and utilize the rescaling~$u_r$ defined in~\eqref{defin-rescaling}. In this setting, the inductive assumption gives that
$$ \left(\fint_{B_{1}}\left|\nabla u_r(x)-q_{k}\right|^p\,dx\right)^{1/p}\le r^{\alpha}\varepsilon a=\varepsilon_k a,$$
with~$\varepsilon_k:=r^{\alpha}\varepsilon=\rho^{k\alpha}\varepsilon$.

Notice that the inductive assumption also yields~\eqref{estimate-norm-q}, as soon as~$\varepsilon$ is chosen conveniently
small. Therefore, thanks to Lemma~\ref{remark-rescaling-almost-minimizer-1} as well,
we are in position of using Lemma~\ref{lemma-second-alternative-dichotomy-improved}
on the function~$u_r$ with~$\sigma:=\kappa r^\beta$.
We stress that the structural condition~$\sigma\le c_o\varepsilon^P$ in Lemma~\ref{lemma-second-alternative-dichotomy-improved} translates here into~$\kappa\le c_0\varepsilon^P$, which is precisely the requirement in the statement of
Corollary~\ref{corollary-lemma-second-alternative-dichotomy-improved} (by taking~$\kappa_0$ there less than or equal to~$c_0$).
In this way, we deduce from~\eqref{so3cer5b56859tj45ivnt45-1} and~\eqref{so3cer5b56859tj45ivnt45-2}
that there exists~$q_{k+1}\in\R^n$ such that
		\begin{equation*}\begin{split}
			\left(\fint_{B_\rho}\left|\nabla u_r(x)-q_{k+1}\right|^p\,dx\right)^{1/p}\le\rho^{\alpha}\varepsilon_k a\qquad{\mbox{and}}\qquad
\left|q_k-{q}_{k+1}\right|\le \widetilde{C}\varepsilon_k a.	
		\end{split}\end{equation*}
Scaling back, we find that
$$ 	\left(\fint_{B_{\rho^{k+1}}}\left|\nabla u(x)-q_{k+1}\right|^p\,dx\right)^{1/p}\le\rho^\alpha\rho^{{k\alpha}}\varepsilon a=
\rho^{{(k+1)\alpha}}\varepsilon a.$$
Furthermore,
\begin{eqnarray*}&& |q_{k+1}|\le |q_k-q_{k+1}|+|q_k|\le\widetilde{C}\varepsilon_k a+
C_0a+\frac{\widetilde C\e a\left( 1-\rho^{k\alpha}\right)}{1-\rho^{\alpha}}\\&&\qquad
=C_0a+\frac{\widetilde C\e a\left( 1-\rho^{k\alpha}\right)}{1-\rho^{\alpha}}+\widetilde{C}\rho^{k\alpha}\varepsilon a=
C_0a+\frac{\widetilde C\e a\left( 1-\rho^{(k+1)\alpha}\right)}{1-\rho^{\alpha}}
\end{eqnarray*}
and
\begin{eqnarray*}&& |q_{k+1}|\ge |q_k|-|q_k-q_{k+1}|\ge
\frac{a}4-\frac{\widetilde C\e a\left( 1-\rho^{k\alpha}\right)}{1-\rho^{\alpha}}-
\widetilde{C}\varepsilon_k a\\&&\qquad
=\frac{a}4-\frac{\widetilde C\e a\left( 1-\rho^{k\alpha}\right)}{1-\rho^{\alpha}}-
\widetilde{C}\rho^{k\alpha}\varepsilon  a=\frac{a}4-\frac{\widetilde C\e a\left( 1-\rho^{(k+1)\alpha}\right)}{1-\rho^{\alpha}}.\end{eqnarray*}
In addition,
\begin{eqnarray*}&&
\left|\left(\fint_{B_{\rho^{k+1}}}\left|\nabla u(x)\right|^p\,dx\right)^{1/p}-q_{k+1}\right|=
\frac{1}{|B_{\rho^{k+1}}|^{1/p}}
\left|\left(\int_{B_{\rho^{k+1}}}\left|\nabla u(x)\right|^p\,dx\right)^{1/p}-q_{k+1}|B_{\rho^{k+1}}|^{1/p}\right|\\&&\qquad=
\frac{1}{|B_{\rho^{k+1}}|^{1/p}}
\left| \left\|\nabla u\right\|_{L^p(B_{\rho^{k+1}})}-\left\|q_{k+1}\right\|_{L^p(B_{\rho^{k+1}})}\right|\le
\frac{1}{|B_{\rho^{k+1}}|^{1/p}}
\left\|\nabla u-q_{k+1}\right\|_{L^p(B_{\rho^{k+1}})}\\
&&\qquad=\left(\fint_{B_{\rho^{k+1}}}\left|\nabla u(x)-q_{k+1}\right|^p\,dx\right)^{1/p}\le\e a,
\end{eqnarray*}
which yields that
\begin{eqnarray*}&&
\left|\left(\fint_{B_{\rho^{k+1}}}\left|\nabla u(x)\right|^p\,dx\right)^{1/p}-q\right|=
\left|\left(\fint_{B_{\rho^{k+1}}}\left|\nabla u(x)\right|^p\,dx\right)^{1/p}-q_0\right|\\&&\qquad\le
\left|\left(\fint_{B_{\rho^{k+1}}}\left|\nabla u(x)\right|^p\,dx\right)^{1/p}-q_{k+1}\right|+\sum_{j=0}^k|q_{j+1}-q_j|\\&&\qquad\le
\e a+\widetilde{C}a\sum_{j=0}^k\varepsilon_j
\le \e a+\widetilde{C}\varepsilon a\sum_{j=0}^{+\infty}\rho^{j\alpha}=
\left(1+\frac{\widetilde{C}}{1-\rho^\alpha}\right)\e a\\&&\qquad\le
\left(1+\frac{\widetilde{C}}{1-\rho^\alpha}\right)\e |q|\le\frac{|q|}2.
\end{eqnarray*}
These observations conclude the proof of the inductive step and establish~\eqref{jdietrube8t5746v9854-bis}.

By scaling the relations in~\eqref{jdietrube8t5746v9854-bis}
(and recalling~\eqref{KSM-kmb2}), we have that, for all~$x_0\in B_{3/4}$,
\begin{equation}\label{KSd-fqqun8ieI} \left(\fint_{B_{\rho^k}(x_0)}\left|\nabla u(x)-q_{k,x_0}\right|^p\,dx\right)^{1/p}\le C\rho^{k\alpha}\varepsilon a,\end{equation}
for some positive constant~$C$, depending only on~$n$ and~$p$, and suitable points~$q_{k,x_0}$ such that
$$|q_{k,x_0}|\in\left[ \frac{a}4-\frac{\widetilde C\e a\left( 1-\rho^{k\alpha}\right)}{1-\rho^{\alpha}}, \,C_0a+\frac{\widetilde C\e a\left( 1-\rho^{k\alpha}\right)}{1-\rho^{\alpha}}\right].$$

We now want to exploit the Campanato estimate in Theorem~\ref{theo-isomorph-Campanato-spaces-Holder-spaces},
here applied to the gradient of~$u$ minus~$q$,
and used with~$\Omega:=B_{1/2}$ and~$\lambda:=n+\alpha p$
(note that, with this choice, $\lambda\in(n,n+p]$ and we are therefore under
the structural assumptions of Theorem~\ref{theo-isomorph-Campanato-spaces-Holder-spaces}). For this purpose, we claim that, for all~$x_0\in B_{3/4}$ and~$\tau>0$,
\begin{equation}\label{80uefrepsilon-a}
\left(\tau^{-\lambda}\inf_{\xi\in \mathbb{R}^n}\int_{B_\tau(x_0)\cap B_{1/2}}\left|\nabla u(x)-q-\xi\right|^p\,dx\right)^{1/p}\le C\e a,
\end{equation}
up to renaming~$C>0$.

To prove this, we distinguish two cases, either~$\tau\ge1$ or~$\tau\in(0,1)$.

If~$\tau\ge1$, we use~\eqref{second-alternative-dichotomy}
and we get that
\begin{eqnarray*}&&
\left(\tau^{-\lambda}\inf_{\xi\in \mathbb{R}^n}\int_{B_\tau(x_0)\cap B_{1/2}}\left|\nabla u(x)-q-\xi\right|^p\,dx\right)^{1/p}\le
\left(\int_{B_{1/2}}\left|\nabla u(x)-q\right|^p\,dx\right)^{1/p}
\\&&\qquad\le\left(|B_1|\fint_{B_{1}}\left|\nabla u(x)-q\right|^p\,dx\right)^{1/p}
\le |B_1|^{1/p}\,\varepsilon a,
\end{eqnarray*}
which gives~\eqref{80uefrepsilon-a} in this case.

If instead~$\tau\in(0,1)$, we pick~$k\in\N$ such that~$\rho^{k+1}<\tau\le\rho^k$ and we make use of~\eqref{KSd-fqqun8ieI} to infer that
\begin{eqnarray*}&&
\left(\tau^{-\lambda}\inf_{\xi\in \mathbb{R}^n}\int_{B_\tau(x_0)\cap B_{1/2}}\left|\nabla u(x)-q-\xi\right|^p\,dx\right)^{1/p}\le
\left(\rho^{-\lambda(k+1)}\int_{B_{\rho^k}(x_0)}\left|\nabla u(x)-q_{k,x_0}\right|^p\,dx\right)^{1/p}\\&&\qquad=
\left(\rho^{-\lambda(k+1)+kn}\, |B_1|\fint_{B_{\rho^k}(x_0)}\left|\nabla u(x)-q_{k,x_0}\right|^p\,dx\right)^{1/p}\\&&\qquad
\le C|B_1|^{1/p}\,\rho^{[-(n+\alpha p)(k+1)+kn+k\alpha p]/p}\,\varepsilon a\\&&\qquad\le C|B_1|^{1/p}\,\rho^{[-n-\alpha p]/p}\,\varepsilon a,
\end{eqnarray*}
which gives~\eqref{80uefrepsilon-a} up to renaming~$C$. The proof of~\eqref{80uefrepsilon-a}
is thereby complete.

By~\eqref{equivalent-Campanato-seminorm} and~\eqref{80uefrepsilon-a}, we obtain that
$$ [\nabla u-q]_{\mathcal{L}^{p,\lambda}(B_{1/2})}\le C\varepsilon a,$$
up to renaming~$C$. Since also
$$ \left\|\nabla u-q\right\|_{L^p(B_{1/2})}\le C\varepsilon a,$$
thanks to~\eqref{second-alternative-dichotomy},
we deduce from~\eqref{defin-norm-L-p-lambda} that~$\left\|\nabla u-q\right\|_{\mathcal{L}^{p,\lambda}(B_{1/2})}\le
C\varepsilon a$, up to renaming~$C$. Then, in view of Theorem~\ref{theo-isomorph-Campanato-spaces-Holder-spaces},
we find that
\begin{equation}\label{mIPKjkyhneY7ujdf}
\left\|\nabla u-q\right\|_{C^{0,\gamma}(B_{1/2})}\le
C\varepsilon a,\end{equation} with~$\gamma=\frac{\lambda-n}{p}=\alpha$.

Now we define~$\ell(x):=u(0)+q\cdot x$. For all~$x\in B_{1/2}$ we have that
$$ |u(x)-\ell(x)|=|u(x)-u(0)-q\cdot x|=
\left|\int_0^1 \big(\nabla u(tx)-q\big)\cdot x\,dt
\right|\le C\varepsilon a,
$$
thanks to~\eqref{mIPKjkyhneY7ujdf}, up to renaming~$C$, and therefore~$\|u-\ell\|_{L^\infty(B_{1/2})}\le C\varepsilon a$.
This and~\eqref{mIPKjkyhneY7ujdf} establish~\eqref{first-conclusion-corollary-p-ge-2}, as desired.
\end{proof}

 \section{Lipschitz continuity of almost minimizers and proof
of Theorem~\ref{theor-Lipsch-contin-alm-minim}}\label{sec:lip1}

We are now in position of establishing the Lipschitz regularity result in
Theorem~\ref{theor-Lipsch-contin-alm-minim}. 
		
		\begin{proof}[Proof of Theorem~\ref{theor-Lipsch-contin-alm-minim}]
			In the light of Lemma~\ref{remark-rescaling-almost-minimizer-1}, up to a rescaling,
			we can assume that~$u$ is an almost minimizer with constant \begin{equation}\label{1-e2LS}
			\widetilde{\kappa}:=\kappa s^{\beta},\end{equation} which can be made arbitrarily small by an appropriate choice of~$s>0$.
			
			We let~$P:=\max\{p,2\}$. Let also~$\alpha_0\in(0,1]$ be the structural constant given by Lemma~\ref{lemma-second-alternative-dichotomy-improved} and define
			$$ \alpha:=\frac12\min\left\{\alpha_0,\frac\beta{P}\right\}.$$
We also consider~$\e_0$ as given by Proposition~\ref{proposition-dichotomy} and
take~$\eta\in(0,1)$ and~$M\ge1$ as in Proposition~\ref{proposition-dichotomy}
(corresponding here to choice~$\varepsilon:=\varepsilon_0/2$).

Let us define 
			\begin{equation}\label{average-nabla-u-to-^p-in-B_tau-to-1/p}
				a(\tau):= \bigg(\fint_{B_{\tau}}\left|\nabla u(x)\right|^p\,dx\bigg)^{1/p}.
			\end{equation}
			We claim that, for every~$r\in(0,\eta]$,
			\begin{equation}\label{ineq-a(r)-r<1}
a(r)\le C(M,\eta)(1+a(1)),
\end{equation}
for some~$C(M,\eta)>0$, possibly depending on~$n$ and~$p$ as well.

To prove this, we consider the set~${\mathcal{K}}\subseteq\N=\{0,1,2,\dots\}$ containing all the integers~$k\in\N$ such that
			\begin{equation}\label{ineq-a(eta^k)}
				a(\eta^k)\le C(\eta)M+2^{-k}a(1),
			\end{equation}
where
\begin{equation}\label{paJOAKHSVEW9UTIG49-WROITGJH-2}
			C(\eta):= 2\eta^{-n/p} . \end{equation}
			We stress that for~$k=0$ formula~\eqref{ineq-a(eta^k)} is clearly true, hence
			\begin{equation}\label{loaskxZZPL}
			0\in{\mathcal{K}}\ne\varnothing.\end{equation}
			We then distinguish two cases, namely whether~\eqref{ineq-a(eta^k)} holds for every~$k$ (i.e. ${\mathcal{K}}=\N$) or not (i.e. ${\mathcal{K}}\subsetneqq\N$). 
			
			To start with, let us assume that we are in the first situation. Thus, for every~$r\in(0,\eta],$ we pick~$k_0\in\N={\mathcal{K}}$ such that~$\eta^{k_0+1}<r\le \eta^{k_0}.$ Hence, according to~\eqref{average-nabla-u-to-^p-in-B_tau-to-1/p} and~\eqref{ineq-a(eta^k)}, we get that
			\begin{equation*}\begin{split}
				&a(r)\le \bigg(\frac{1}{|B_1|\,\eta^{(k_0+1)n} }\int_{B_{\eta^{k_0}}}\left|\nabla u(x)\right|^p\,dx\bigg)^{1/p}=\eta^{-n/p}a(\eta^{k_0})\le \eta^{-n/p}( C(\eta)M+a(1))\\
				&\qquad\qquad\le \eta^{-n/p}\max\left\{ C(\eta)M,1\right\}(1+a(1))\le C(M,\eta)(1+a(1)),
\end{split}\end{equation*}
provided that~$C(M,\eta)
\ge\eta^{-n/p}\max\left\{ C(\eta)M,1\right\}$.
The proof of~\eqref{ineq-a(r)-r<1} is thereby complete in this case.
			
			We now consider the second case, that is the one in which~${\mathcal{K}}\subsetneqq\N$.
Then, by~\eqref{loaskxZZPL}, there exists~$k_0\in\N$ such that~$\{0,\dots,k_0\}\in{\mathcal{K}}$
and
\begin{equation}\label{pjqsodsckq-=i0dw0pfy9qdhiwcs}
k_0+1\not\in{\mathcal{K}}.\end{equation} We notice that, by~\eqref{average-nabla-u-to-^p-in-B_tau-to-1/p}
and~\eqref{paJOAKHSVEW9UTIG49-WROITGJH-2},
$$\eta^{-n/p}M< C(\eta)M\le C(\eta)M+2^{-(k_0+1)}a(1)<
a(\eta^{k_0+1})\le\eta^{-n/p}a(\eta^{k_0})$$
and therefore
\begin{equation}\label{DALS:iskcd0}a(\eta^{k_0})> M.
\end{equation}
Furthermore, from~\eqref{pjqsodsckq-=i0dw0pfy9qdhiwcs},
\begin{equation}\label{DALS:iskcd}
a(\eta^{k_0+1})>C(\eta)M+2^{-(k_0+1)}a(1)\ge\frac{C(\eta)M+2^{-k_0}a(1)}{2}\ge \frac{a(\eta^{k_0})}2.
\end{equation}

			Now, we claim that
			\begin{equation}\label{second-alternative-dichotomy-B_eta^k}\begin{split}&
				\left(\fint_{B_{\eta^{k_0+1}}}\left|\nabla u(x)-q\right|^p \,dx\right)^{1/p}\le \varepsilon a(\eta^{k_0}),
			\\&{\mbox{for some~$q\in\R^n$ such that }}\quad
			\frac{a(\eta^{k_0})}{4}<\left|q\right|\le C_0a(\eta^{k_0}),\end{split}\end{equation}
			being~$C_0$ the constant given by Proposition~\ref{proposition-dichotomy}.
			
To prove this, we apply the dichotomy result in Proposition~\ref{proposition-dichotomy} rescaled in the ball~$B_{\eta^{k_0}}$.
In this way, we deduce from~\eqref{first-conclusion-dichotomy}, \eqref{second-conclusion-dichotomy} 
and~\eqref{bound-q} that~\eqref{second-alternative-dichotomy-B_eta^k} holds true, unless
\[ a(\eta^{k_0+1})=\left(\fint_{B_{\eta^{k_0+1}}}\left|\nabla u(x)\right|^p \,dx\right)^{1/p}\le \frac{ a(\eta^{k_0}) }2, \]
in contradiction with~\eqref{DALS:iskcd}.
			This ends the proof of~\eqref{second-alternative-dichotomy-B_eta^k}.
			
Now we apply Corollary~\ref{corollary-lemma-second-alternative-dichotomy-improved} rescaled in the ball~$B_{\eta^{k_0+1}}$:
namely, we use here Corollary~\ref{corollary-lemma-second-alternative-dichotomy-improved} with~$B_1$ replaced
by~$B_{\eta^{k_0+1}}$ and~$a$ replaced by~$a(\eta^{k_0+1})$. 
To this end, we need to verify that the assumptions of Corollary~\ref{corollary-lemma-second-alternative-dichotomy-improved} are
fulfilled in this rescaled situation. Specifically, we note that, by~\eqref{DALS:iskcd0} and~\eqref{DALS:iskcd},
$$ a(\eta^{k_0+1})\ge\frac{M}{2}.$$

Also, since~$k_0\in{\mathcal{K}}$,
$$ a(\eta^{k_0+1})\le\eta^{-n/p}a(\eta^{k_0})\le \eta^{-n/p}\big(C(\eta)M+2^{-k_0}a(1)
			\big)\le \eta^{-n/p}\big(C(\eta)M+a(1)\big).$$
			These observations give that~\eqref{A0a1} is satisfied in this rescaled setting with
			\begin{equation}\label{speriamobene}
			a_0:=\frac{M}2\qquad {\mbox{ and }}\qquad a_1:=\eta^{-n/p}\big(C(\eta)M+a(1)\big).\end{equation}
			
			Moreover, by~\eqref{DALS:iskcd} and~\eqref{second-alternative-dichotomy-B_eta^k},
$$\left(\fint_{B_{\eta^{k_0+1}}}\left|\nabla u(x)-q\right|^p \,dx\right)^{1/p}\le 2\varepsilon a(\eta^{k_0+1}),
$$ showing that~\eqref{second-alternative-dichotomy} is satisfied (here with~$2\varepsilon$ instead of~$\varepsilon$).

We also deduce from~\eqref{DALS:iskcd} and~\eqref{second-alternative-dichotomy-B_eta^k} that
$$			\frac{\eta^{n/p}a(\eta^{k_0+1})}{4}\le\frac{a(\eta^{k_0})}{4}<\left|q\right|\le C_0a(\eta^{k_0})\le2C_0a(\eta^{k_0+1}),
$$
which gives that~\eqref{bound-q} is satisfied here (though with different structural constants).

We can thereby exploit Corollary~\ref{corollary-lemma-second-alternative-dichotomy-improved}
in a rescaled version
to obtain the existence of~$\e_0$ (possibly different from the one given by Proposition~\ref{proposition-dichotomy})
and~$\kappa_0$, depending on~$n$, $p$, $\beta$, $a_0$ and~$a_1$,
such that, if
\begin{equation}\label{condaggforseokroetu}
\widetilde\kappa\in(0,\kappa_0\e_0^P],\end{equation} 
we have that
\begin{equation}\label{swqevu3576435432647328528765uhgfhdf}
				\left\|\nabla u\right\|_{L^{\infty} (B_{\eta^{k_0+1}/2} )}\le \bar{C}a(\eta^{k_0}),
			\end{equation}
			for some structural constant~$\bar{C}>0$.
			
We remark that condition~\eqref{condaggforseokroetu} is satisfied by taking~$s$ in~\eqref{1-e2LS}
sufficiently small, namely taking~$s:= \left(\frac{\kappa_0 \e_0^P}{2\kappa}\right)^{\frac1{\beta}}$.
Notice in particular that
\begin{equation}\label{noticeinpartlsworeteuyti8787686}
{\mbox{$s$ depends on~$n$, $p$, $\kappa$,
$\beta$ and~$\|\nabla u\|_{L^p(B_1)}$,}}\end{equation} due to~\eqref{speriamobene}.	

As a result of~\eqref{swqevu3576435432647328528765uhgfhdf}, for all~$r\in\left(0,\frac{\eta^{k_0+1}}2\right)$,
\begin{equation}\label{ineq-a(r)-r-le-eta^k_0/2}\begin{split}&
a(r)=\left( \frac1{|B_r|}\int_{B_r}|\nabla u(x)|^p\,dx\right)^{\frac1p}\le \bar{C}a(\eta^{k_0})\le
\bar{C}\big( C(\eta)M+2^{-k_0}a(1)\big)\\&\qquad\qquad\le\bar{C}\big( C(\eta)M+a(1)\big)\le C(M,\eta)(1+a(1)),
\end{split}\end{equation}
as long as~$C(M,\eta)\ge\bar{C}(C(\eta)M+1)$.

Moreover, if~$r\in\left[\frac{\eta^{k_0+1}}2,\eta\right)$ then we take~$k_r\in\N$ such that~$\eta^{k_r+1}<r\le\eta^{k_r}$.
Note that
$$\frac1{\eta^{k_r}}\le\frac1r\le\frac{2}{\eta^{k_0+1}},$$
whence
\begin{equation}\label{L23tSx32}
 k_r \le k_0+C_\star,
\end{equation}
where~$C_\star:=1+\frac{\log2}{\log(1/\eta)}$.

We now distinguish two cases: if~$k_r\in\{0,\dots,k_0\}$ then~$k_r\in{\mathcal{K}}$ and therefore
\begin{equation*}
a(\eta^{k_r})\le C(\eta)M+2^{-k_r}a(1).
\end{equation*}
{F}rom this we obtain that
\begin{equation}\label{kpd012}
\begin{split}&
a(r)\le\left( \frac1{|B_{\eta^{k_r+1}}|}\int_{B_{\eta^{k_r}}}|\nabla u(x)|^p\,dx\right)^{\frac1p}
=\eta^{-n/p}a(\eta^{k_r})\\&\qquad\qquad\le\eta^{-n/p}\big( C(\eta)M+2^{-k_r}a(1)\big)\le C(M,\eta)(1+a(1)).\end{split}
\end{equation}
If instead~$k_r>k_0$, we employ~\eqref{L23tSx32} to see that
\begin{eqnarray*}&&
a(r)\le\left( \frac1{|B_{\eta^{k_0+C_\star+1}}|}\int_{B_{\eta^{k_0}}}|\nabla u(x)|^p\,dx\right)^{\frac1p}
=\eta^{-n(C_\star+1)/p} a(\eta^{k_0})\\ &&\qquad\qquad\quad\le
\eta^{-n(C_\star+1)/p}\big( C(\eta)M+2^{-k_0}a(1)\big)\le\eta^{-n(C_\star+1)/p}\big( C(\eta)M+a(1)\big)\\&&\quad\qquad\qquad\le
C(M,\eta)(1+a(1)),
\end{eqnarray*}
as long as~$C(M,\eta)$ is chosen large enough.

This and~\eqref{kpd012} give that for all~$r\in\left[\frac{\eta^{k_0+1}}2,\eta\right)$,
we have that~$a(r)\le C(M,\eta)(1+a(1))$. Combining this with~\eqref{ineq-a(r)-r-le-eta^k_0/2},
we deduce that~\eqref{ineq-a(r)-r<1} holds true, as desired.

Now, up to scaling and translations, we can extend~\eqref{ineq-a(r)-r<1} to all balls with center~$x_0$ in~$B_{1/2}$
and sufficiently small radius. Namely, we have that, for all~$r\in(0,\eta]$,
			\begin{equation*}
a(r,x_0)\le C(M,\eta)(1+a(1)),
\end{equation*}
where
$$ 				a(r,x_0):= \left(\fint_{B_{r}(x_0)}\left|\nabla u(x)\right|^p\,dx\right)^{1/p}.$$
			Therefore, according to the Lebesgue Differentiation Theorem, recalling that~$u\in W^{1,p}(B_1),$ we find that, for all~$x_0\in B_{1/2}$,
			\begin{align*}
				&\left|\nabla u(x_0)\right|=\lim_{r\rightarrow 0}a(r,x_0)\le C(M,\eta)(1+a(1))=C\Big(1+\left\|\nabla u\right\|_{L^p(B_1)}\Big),
			\end{align*}
			for some~$C>0$,
			which yields
			\begin{equation}\label{first-claim-theorem-Lipsch-contin}
				\left\|\nabla u\right\|_{L^{\infty}(B_{1/2})}\le C\Big(1+\left\| \nabla u\right\|_{L^p(B_1)}\Big).
			\end{equation}
			So, the first claim in Theorem~\ref{theor-Lipsch-contin-alm-minim} is proved.
			
			We now show that the second claim in Theorem~\ref{theor-Lipsch-contin-alm-minim}
			holds true. For this, we can assume that
			\begin{equation}\label{PROCSH}\{u=0\}\cap B_{s/100}\ne\varnothing\end{equation}
			and we take a Lebesgue point~$\bar{x}\in B_{s/100}$ for~$\nabla u$.
			Up to a translation, we can suppose that~$\bar{x}=0$ and change our assumption~\eqref{PROCSH} into
			\begin{equation}\label{PROCSH2}\{u=0\}\cap B_{s/50}\ne\varnothing.\end{equation}
We claim that, in this situation,
			\begin{equation}\label{alt-c-PK}
			{\mathcal{K}}=\N.
			\end{equation}
			Indeed, suppose not and let~$k_0$ as above (recall~\eqref{pjqsodsckq-=i0dw0pfy9qdhiwcs}).
			Then, in light of~\eqref{second-alternative-dichotomy-B_eta^k},
			we can apply Corollary~\ref{corollary-lemma-second-alternative-dichotomy-improved} (rescaled as before)
			and conclude, by~\eqref{kjhgfdsqwertyuio09876543}, that~$u>0$
			in~$B_{s/2}$, in contradiction with our hypothesis in~\eqref{PROCSH2}.
			This establishes~\eqref{alt-c-PK}.
			
			Therefore, in view of~\eqref{ineq-a(eta^k)}  and~\eqref{alt-c-PK}, for all~$k\in\N$,
$$	a(\eta^k)\le C(\eta)M+2^{-k}a(1),$$
and as a result
$$ |\nabla u(\bar{x})|=|\nabla u(0)|=\lim_{k\to+\infty}a(\eta^k)\le
\lim_{k\to+\infty}\big(C(\eta)M+2^{-k}a(1)\big)=C(\eta)M.$$
Recalling also~\eqref{noticeinpartlsworeteuyti8787686},
the proof of the second claim in Theorem~\ref{theor-Lipsch-contin-alm-minim} is thereby complete.
\end{proof}
		
	\begin{bibdiv}
		\begin{biblist}

			\bib{MR473960}{article}{
   author={Acker, Andrew},
   title={Heat flow inequalities with applications to heat flow optimization
   problems},
   journal={SIAM J. Math. Anal.},
   volume={8},
   date={1977},
   number={4},
   pages={604--618},
   issn={0036-1410},
   review={\MR{473960}},
   doi={10.1137/0508048},
}
			
			\bib{MR618549}{article}{
				author={Alt, H. W.},
				author={Caffarelli, L. A.},
				title={Existence and regularity for a minimum problem with free boundary},
				journal={J. Reine Angew. Math.},
				volume={325},
				date={1981},
				pages={105--144},
				issn={0075-4102},
				review={\MR{618549}},
			}
			
			\bib{MR1607608}{article}{
   author={Aparicio, N. D.},
   author={Pidcock, M. K.},
   title={On a class of free boundary problems for the Laplace equation in
   two dimensions},
   journal={Inverse Problems},
   volume={14},
   date={1998},
   number={1},
   pages={9--18},
   issn={0266-5611},
   review={\MR{1607608}},
   doi={10.1088/0266-5611/14/1/003},
}


\bib{MR1044809}{article}{
   author={Berestycki, H.},
   author={Caffarelli, L. A.},
   author={Nirenberg, L.},
   title={Uniform estimates for regularization of free boundary problems},
   conference={
      title={Analysis and partial differential equations},
   },
   book={
      series={Lecture Notes in Pure and Appl. Math.},
      volume={122},
      publisher={Dekker, New York},
   },
   date={1990},
   pages={567--619},
   review={\MR{1044809}},
}

\bib{MR3249814}{article}{
   author={Braga, J. Ederson M.},
   author={Moreira, Diego R.},
   title={Uniform Lipschitz regularity for classes of minimizers in two
   phase free boundary problems in Orlicz spaces with small density on the
   negative phase},
   journal={Ann. Inst. H. Poincar\'{e} C Anal. Non Lin\'{e}aire},
   volume={31},
   date={2014},
   number={4},
   pages={823--850},
   issn={0294-1449},
   review={\MR{3249814}},
   doi={10.1016/j.anihpc.2013.07.006},
}
						
			\bib{MR990856}{article}{
				author={Caffarelli, Luis A.},
				title={A Harnack inequality approach to the regularity of free
					boundaries. I. Lipschitz free boundaries are $C^{1,\alpha}$},
				journal={Rev. Mat. Iberoamericana},
				volume={3},
				date={1987},
				number={2},
				pages={139--162},
				issn={0213-2230},
				review={\MR{990856}},
				doi={10.4171/RMI/47},
			}
			
			\bib{MR973745}{article}{
				author={Caffarelli, Luis A.},
				title={A Harnack inequality approach to the regularity of free
					boundaries. II. Flat free boundaries are Lipschitz},
				journal={Comm. Pure Appl. Math.},
				volume={42},
				date={1989},
				number={1},
				pages={55--78},
				issn={0010-3640},
				review={\MR{973745}},
				doi={10.1002/cpa.3160420105},
			}
			
			\bib{MR1029856}{article}{
				author={Caffarelli, Luis A.},
				title={A Harnack inequality approach to the regularity of free
					boundaries. III. Existence theory, compactness, and dependence on $X$},
				journal={Ann. Scuola Norm. Sup. Pisa Cl. Sci. (4)},
				volume={15},
				date={1988},
				number={4},
				pages={583--602 (1989)},
				issn={0391-173X},
				review={\MR{1029856}},
			}
			
			\bib{MR2145284}{book}{
   author={Caffarelli, Luis},
   author={Salsa, Sandro},
   title={A geometric approach to free boundary problems},
   series={Graduate Studies in Mathematics},
   volume={68},
   publisher={American Mathematical Society, Providence, RI},
   date={2005},
   pages={x+270},
   isbn={0-8218-3784-2},
   review={\MR{2145284}},
   doi={10.1090/gsm/068},
}
						
			\bib{MR2133664}{article}{
				author={Danielli, Donatella},
				author={Petrosyan, Arshak},
				title={A minimum problem with free boundary for a degenerate quasilinear
					operator},
				journal={Calc. Var. Partial Differential Equations},
				volume={23},
				date={2005},
				number={1},
				pages={97--124},
				issn={0944-2669},
				review={\MR{2133664}},
				doi={10.1007/s00526-004-0294-5},
			}
			
			\bib{MR2250499}{article}{
   author={Danielli, Donatella},
   author={Petrosyan, Arshak},
   title={Full regularity of the free boundary in a Bernoulli-type problem
   in two dimensions},
   journal={Math. Res. Lett.},
   volume={13},
   date={2006},
   number={4},
   pages={667--681},
   issn={1073-2780},
   review={\MR{2250499}},
   doi={10.4310/MRL.2006.v13.n4.a14},
}
			
			\bib{MR1976085}{article}{
   author={Danielli, D.},
   author={Petrosyan, A.},
   author={Shahgholian, H.},
   title={A singular perturbation problem for the $p$-Laplace operator},
   journal={Indiana Univ. Math. J.},
   volume={52},
   date={2003},
   number={2},
   pages={457--476},
   issn={0022-2518},
   review={\MR{1976085}},
   doi={10.1512/iumj.2003.52.2163},
}

			\bib{MR3948692}{article}{
				author={David, Guy},
				author={Engelstein, Max},
				author={Toro, Tatiana},
				title={Free boundary regularity for almost-minimizers},
				journal={Adv. Math.},
				volume={350},
				date={2019},
				pages={1109--1192},
				issn={0001-8708},
				review={\MR{3948692}},
				doi={10.1016/j.aim.2019.04.059},
			}
			
			\bib{MR3385167}{article}{
				author={David, G.},
				author={Toro, T.},
				title={Regularity of almost minimizers with free boundary},
				journal={Calc. Var. Partial Differential Equations},
				volume={54},
				date={2015},
				number={1},
				pages={455--524},
				issn={0944-2669},
				review={\MR{3385167}},
				doi={10.1007/s00526-014-0792-z},
			}
						
			\bib{D}{article}{
				author={De Silva, D.},
				title={Free boundary regularity for a problem with right hand side},
				journal={Interfaces Free Bound.},
				volume={13},
				date={2011},
				number={2},
				pages={223--238},
				issn={1463-9963},
				review={\MR{2813524}},
				doi={10.4171/IFB/255},
			}	
			
			\bib{MR4062979}{article}{
   author={De Silva, D.},
   author={Savin, O.},
   title={Thin one-phase almost minimizers},
   journal={Nonlinear Anal.},
   volume={193},
   date={2020},
   pages={111507, 23},
   issn={0362-546X},
   review={\MR{4062979}},
   doi={10.1016/j.na.2019.04.006},
}	

			\bib{DS}{article}{
				author={De Silva, D.},
				author={Savin, O.},
				title={Almost minimizers of the one-phase free boundary problem},
				journal={Comm. Partial Differential Equations},
				volume={45},
				date={2020},
				number={8},
				pages={913--930},
				issn={0360-5302},
				review={\MR{4126326}},
				doi={10.1080/03605302.2020.1743718},
			}	
			
\bib{MR4201786}{article}{
   author={De Silva, D.},
   author={Savin, O.},
   title={Quasi-Harnack inequality},
   journal={Amer. J. Math.},
   volume={143},
   date={2021},
   number={1},
   pages={307--331},
   issn={0002-9327},
   review={\MR{4201786}},
   doi={10.1353/ajm.2021.0001},
}

			\bib{MR709038}{article}{
   author={DiBenedetto, E.},
   title={$C^{1+\alpha }$ local regularity of weak solutions of degenerate
   elliptic equations},
   journal={Nonlinear Anal.},
   volume={7},
   date={1983},
   number={8},
   pages={827--850},
   issn={0362-546X},
   review={\MR{709038}},
   doi={10.1016/0362-546X(83)90061-5},
}	

			\bib{MR3771123}{article}{
				author={Dipierro, Serena},
				author={Karakhanyan, Aram L.},
				title={Stratification of free boundary points for a two-phase variational
					problem},
				journal={Adv. Math.},
				volume={328},
				date={2018},
				pages={40--81},
				issn={0001-8708},
				review={\MR{3771123}},
				doi={10.1016/j.aim.2018.01.005},
			}
			
			\bib{MR3910196}{article}{
				author={Dipierro, Serena},
				author={Karakhanyan, Aram L.},
				title={A new discrete monotonicity formula with application to a
					two-phase free boundary problem in dimension two},
				journal={Comm. Partial Differential Equations},
				volume={43},
				date={2018},
				number={7},
				pages={1073--1101},
				issn={0360-5302},
				review={\MR{3910196}},
				doi={10.1080/03605302.2018.1499776},
			}
			
			
			\bib{MR1625845}{book}{
				author={Evans, Lawrence C.},
				title={Partial differential equations},
				series={Graduate Studies in Mathematics},
				volume={19},
				publisher={American Mathematical Society, Providence, RI},
				date={1998},
				pages={xviii+662},
				isbn={0-8218-0772-2},
				review={\MR{1625845}},
			}
						
			\bib{FL}{article}{ 
			 author={Ferrari, Fausto},
   author={Lederman, Claudia},
   title={Regularity of flat free boundaries for a $p(x)$-Laplacian problem
   with right hand side},
   journal={Nonlinear Anal.},
   volume={212},
   date={2021},
   pages={Paper No. 112444, 25},
   issn={0362-546X},
   review={\MR{4273843}},
   doi={10.1016/j.na.2021.112444},
}

\bib{FL2}{article}{ 
			 author={Ferrari, Fausto},
   author={Lederman, Claudia},
				title= {Regularity of Lipschitz free boundaries for a $p(x)$-Laplacian problem with right hand side},
				journal = {Preprint},
				date = {2022},
			}
			
\bib{F}{article}{ 
				author = {Forcillo, Nicol\`o},
				title = { Regularity in degenerate elliptic and parabolic free boundary problems},
				journal = {Alma Mater Studiorum Universit\`a di Bologna. Dottorato di ricerca in Matematica, 34 Ciclo},
				date = {2021},
				pages = {1--227},
				MRCLASS = {35R35},
				DOI = {10.48676/unibo/amsdottorato/10001},
				URL = {http://amsdottorato.unibo.it/id/eprint/10001},
			}
			
			\bib{GilT}{book}{
				author={Gilbarg, David},
				author={Trudinger, Neil S.},
				title={Elliptic partial differential equations of second order},
				series={Classics in Mathematics},
				note={Reprint of the 1998 edition},
				publisher={Springer-Verlag, Berlin},
				date={2001},
				pages={xiv+517},
				isbn={3-540-41160-7},
				review={\MR{1814364}},
			}
			
			\bib{G}{book}{
				author={Giusti, Enrico},
				title={Direct methods in the calculus of variations},
				publisher={World Scientific Publishing Co., Inc., River Edge, NJ},
				date={2003},
				pages={viii+403},
				isbn={981-238-043-4},
				review={\MR{1962933}},
				doi={10.1142/9789812795557},
			}
			
\bib{IwMa} {article}{ 
	  author={Iwaniec, Tadeusz},
   author={Manfredi, Juan J.},
   title={Regularity of $p$-harmonic functions on the plane},
   journal={Rev. Mat. Iberoamericana},
   volume={5},
   date={1989},
   number={1-2},
   pages={1--19},
   issn={0213-2230},
   review={\MR{1057335}},
   doi={10.4171/RMI/82},
}

			\bib{MR2237691}{article}{
   author={Karakhanyan, Aram L.},
   title={Up-to boundary regularity for a singular perturbation problem of
   $p$-Laplacian type},
   journal={J. Differential Equations},
   volume={226},
   date={2006},
   number={2},
   pages={558--571},
   issn={0022-0396},
   review={\MR{2237691}},
   doi={10.1016/j.jde.2005.10.014},
}

\bib{MR2383537}{article}{
   author={Karakhanyan, Aram L.},
   title={On the Lipschitz regularity of solutions of a minimum problem with
   free boundary},
   journal={Interfaces Free Bound.},
   volume={10},
   date={2008},
   number={1},
   pages={79--86},
   issn={1463-9963},
   review={\MR{2383537}},
   doi={10.4171/IFB/180},
}

\bib{MR4266232}{article}{
   author={Karakhanyan, Aram},
   title={Full and partial regularity for a class of nonlinear free boundary
   problems},
   journal={Ann. Inst. H. Poincar\'{e} C Anal. Non Lin\'{e}aire},
   volume={38},
   date={2021},
   number={4},
   pages={981--999},
   issn={0294-1449},
   review={\MR{4266232}},
   doi={10.1016/j.anihpc.2020.09.008},
}
			
			\bib{MR983738}{article}{
   author={Lacey, A. A.},
   author={Shillor, M.},
   title={Electrochemical and electro-discharge machining with a threshold
   current},
   journal={IMA J. Appl. Math.},
   volume={39},
   date={1987},
   number={2},
   pages={121--142},
   issn={0272-4960},
   review={\MR{983738}},
   doi={10.1093/imamat/39.2.121},
}

\bib{MR3390082}{article}{
   author={Leit\~{a}o, Raimundo},
   author={de Queiroz, Olivaine S.},
   author={Teixeira, Eduardo V.},
   title={Regularity for degenerate two-phase free boundary problems},
   journal={Ann. Inst. H. Poincar\'{e} C Anal. Non Lin\'{e}aire},
   volume={32},
   date={2015},
   number={4},
   pages={741--762},
   issn={0294-1449},
   review={\MR{3390082}},
   doi={10.1016/j.anihpc.2014.03.004},
}

			\bib{GleRic}{article}{
				author={Leit\~{a}o, Raimundo},
				author={Ricarte, Gleydson},
				title={Free boundary regularity for a degenerate problem with right hand
					side},
				journal={Interfaces Free Bound.},
				volume={20},
				date={2018},
				number={4},
				pages={577--595},
				issn={1463-9963},
				review={\MR{3893420}},
				doi={10.4171/IFB/413},
			}
		
		\bib{MR2680176}{article}{
   author={Lewis, John L.},
   author={Nystr\"{o}m, Kaj},
   title={Regularity of Lipschitz free boundaries in two-phase problems for
   the $p$-Laplace operator},
   journal={Adv. Math.},
   volume={225},
   date={2010},
   number={5},
   pages={2565--2597},
   issn={0001-8708},
   review={\MR{2680176}},
   doi={10.1016/j.aim.2010.05.005},
}

\bib{MR2876248}{article}{
   author={Lewis, John L.},
   author={Nystr\"{o}m, Kaj},
   title={Regularity of flat free boundaries in two-phase problems for the
   $p$-Laplace operator},
   journal={Ann. Inst. H. Poincar\'{e} C Anal. Non Lin\'{e}aire},
   volume={29},
   date={2012},
   number={1},
   pages={83--108},
   issn={0294-1449},
   review={\MR{2876248}},
   doi={10.1016/j.anihpc.2011.09.002},
}
			
			\bib{MZ}{book}{
				author={Mal\'{y}, Jan},
				author={Ziemer, William P.},
				title={Fine regularity of solutions of elliptic partial differential
					equations},
				series={Mathematical Surveys and Monographs},
				volume={51},
				publisher={American Mathematical Society, Providence, RI},
				date={1997},
				pages={xiv+291},
				isbn={0-8218-0335-2},
				review={\MR{1461542}},
				doi={10.1090/surv/051},
			}
			
			\bib{Man}{book}{
				author={Manfredi, Juan Jose},
				title={Regularity of the gradient for a class of nonlinear possibly degenerate elliptic equations},
				note={Thesis (Ph.D.)--Washington University in St. Louis},
				publisher={ProQuest LLC, Ann Arbor, MI},
				date={1986},
				pages={58},
				review={\MR{2635642}},
			}

\bib{MR2431665}{article}{
   author={Mart\'{\i}nez, Sandra},
   author={Wolanski, Noemi},
   title={A minimum problem with free boundary in Orlicz spaces},
   journal={Adv. Math.},
   volume={218},
   date={2008},
   number={6},
   pages={1914--1971},
   issn={0001-8708},
   review={\MR{2431665}},
   doi={10.1016/j.aim.2008.03.028},
}

\bib{MR3168631}{article}{
   author={Moreira, Diego},
   author={Wang, Lihe},
   title={Singular perturbation method for inhomogeneous nonlinear free
   boundary problems},
   journal={Calc. Var. Partial Differential Equations},
   volume={49},
   date={2014},
   number={3-4},
   pages={1237--1261},
   issn={0944-2669},
   review={\MR{3168631}},
   doi={10.1007/s00526-013-0620-x},
}
			
	\bib{MR2126143}{article}{
   author={Petrosyan, Arshak},
   author={Valdinoci, Enrico},
   title={Geometric properties of Bernoulli-type minimizers},
   journal={Interfaces Free Bound.},
   volume={7},
   date={2005},
   number={1},
   pages={55--77},
   issn={1463-9963},
   review={\MR{2126143}},
   doi={10.4171/IFB/113},
}
			
			\bib{MR2139200}{article}{
   author={Petrosyan, Arshak},
   author={Valdinoci, Enrico},
   title={Density estimates for a degenerate/singular phase-transition
   model},
   journal={SIAM J. Math. Anal.},
   volume={36},
   date={2005},
   number={4},
   pages={1057--1079},
   issn={0036-1410},
   review={\MR{2139200}},
   doi={10.1137/S0036141003437678},
}
	
			\bib{T}{article}{
				author={Tolksdorf, Peter},
				title={Regularity for a more general class of quasilinear elliptic
					equations},
				journal={J. Differential Equations},
				volume={51},
				date={1984},
				number={1},
				pages={126--150},
				issn={0022-0396},
				review={\MR{727034}},
				doi={10.1016/0022-0396(84)90105-0},
			}

\bib{Ur}{article}{
	 author={Ural\cprime ceva, N. N.},
   title={Degenerate quasilinear elliptic systems},
   language={Russian},
   journal={Zap. Nau\v{c}n. Sem. Leningrad. Otdel. Mat. Inst. Steklov. (LOMI)},
   volume={7},
   date={1968},
   pages={184--222},
   review={\MR{0244628}},
}

\bib{MR2262703}{article}{
   author={Valdinoci, Enrico},
   title={Flatness of Bernoulli jets},
   journal={Math. Z.},
   volume={254},
   date={2006},
   number={2},
   pages={257--298},
   issn={0025-5874},
   review={\MR{2262703}},
   doi={10.1007/s00209-006-0947-5},
}						
			
		\end{biblist}
	\end{bibdiv}	
	
\end{document}